\documentclass[11pt]{amsart}

\usepackage[a4paper, margin=2.5cm]{geometry}
\usepackage{multicol}			

\usepackage{hyperref}
\hypersetup{
    colorlinks,
    citecolor=green,
    filecolor=black,
    linkcolor=blue,
    urlcolor=black}

\usepackage{amssymb}				
\usepackage{mathrsfs}				
\usepackage{dsfont}	
\usepackage{xcolor}
\usepackage{bbm}
\usepackage{comment}

\newcommand{\ZZ}{\mathbb{Z}}
\definecolor{dgreen}{HTML}{026a10}
\definecolor{dviolet}{HTML}{9109E3}
\definecolor{dorange}{HTML}{e55700}

\usepackage{tikz}
\usetikzlibrary{matrix,arrows}
\usepackage{tikz-cd} 	
\usetikzlibrary{shapes.geometric}
\usepackage{graphicx}
\usepackage{subcaption}
\usepackage{adjustbox}
\usetikzlibrary{calc}
\usepackage{float} 

\usepackage{amsmath} 	
\usepackage{amsthm} 	
\usepackage{mathtools}	
\usepackage{array}
\def\beq{\begin{equation}}
\def\eeq{\end{equation}}
\newtheorem{thm}{Theorem}[section]
\newtheorem{prop}[thm]{Proposition}
\newtheorem{cor}[thm]{Corollary}
\newtheorem{lem}[thm]{Lemma}

\theoremstyle{definition}
\newtheorem{dfn}[thm]{Definition}
\newtheorem{ex}[thm]{Example}
\theoremstyle{remark}
\newtheorem*{rmk}{Remark}

\newcommand{\PP}{\mathbb{P}}
\newcommand{\RR}{\mathbb{R}}

\newcommand{\CC}{\mathbb{C}}

\newcommand{\A}{\mathcal{A}}
\newcommand{\B}{\mathcal{B}}
\newcommand{\I}{\mathcal{I}}
\newcommand{\D}{\mathcal{D}}
\newcommand{\Hom}{\ensuremath{\text{Hom}}}
\newcommand{\Newt}{\ensuremath{\text{Newt}}}
\newcommand{\trop}{\ensuremath{\text{trop}}}
\newcommand{\Cone}{\ensuremath{\text{Cone}}}

\allowdisplaybreaks

\graphicspath{ {tikz/} }

\title{Dyck Paths, Configuration Spaces and Polytopes For Linear Nakayama algebras}

\author{Veronica Calvo Cortes}
\address{Max Planck Institute for Mathematics in the Sciences\\ Leipzig, Germany}
\email{veronica.calvo@mis.mpg.de}

\author{Hadleigh Frost}
\address{Institute for Advanced Study,\\ Princeton, 08540 NJ, U.S.A.}
\email{frost@ias.edu}

\begin{document}

\maketitle

\begin{abstract}
We present a combinatorial model of configuration spaces and polytopes associated to the quotients of $\CC A_n$, the path algebra of the linearly oriented $A_n$ quiver, i.e. the algebra of upper triangular matrices. These quotient algebras are known as linear Nakayama algebras. Such configuration spaces were recently introduced for more general algebras by the second author and collaborators. In this special setting, we provide elementary proofs and explicit combinatorial constructions. From a Dyck path we define three related objects: a finite-dimensional algebra, an affine algebraic variety, and a polytope. Moreover, our constructions are natural: each relation in the poset of Dyck paths gives a morphism between the corresponding objects.
\end{abstract}

\section{Introduction}
The $n$-dimensional associahedron ${\rm Assoc}(n)$ is an important object in combinatorics and topology \cite{stasheff} and in the subject of positive geometry \cite{abhy}. One model for the face lattice of this polytope is to label each of its facets by a chord in a $n+3$-gon; the lower dimensional faces are indexed by subsets of pairwise noncrossing chords. This model of chords in a polygon can also be used to define ``dihedral coordinates'' (see \cite{brown}) for another important object: the moduli space of $n+3$ marked points in $\PP^1$, $\mathcal{M}_{0,n+3}$. Indeed, we can embed $\iota: \mathcal{M}_{0,n+3}\to (\CC^*)^{n(n+3)/2}$ into an algebraic torus whose coordinates are $u_{ij}$ where $ij$ is a chord in our $n+3$-gon. Each $u_{ij}$ is a particular cross-ratio of points in $\PP^1$. The equations cutting out $U_n^\circ:= \iota(\mathcal{M}_{0,n+3})$ are the ``$u$-equations'' of \cite{brown, kobanielsen}, which are an instance of cross-ratio identities (see \cite{lam})
\begin{equation}\label{intro:u}
u_{ij} + \prod_{\substack{kl \\ kl \text{ crosses } ij}} u_{kl} = 1.
\end{equation}
The affine closure in $\CC^{n(n+3)/2}$ of $U_n^\circ$, denoted $U_n$, is an interesting affine variety. In particular, the intersection poset of the divisors $u_{ij}=0$ in the positive part $U_n \cap \RR_+^{n(n+3)/2}$ is precisely the face poset of ${\rm Assoc}(n)$. This follows by observing that $U_n$ is isomorphic to an open affine inside the projective toric variety of an appropriate realization of ${\rm Assoc}(n)$, see \cite{stringy}. 

\smallskip

The associahedron is now just the first of many known examples of this phenomenon, which extends beyond the classical cross-ratio identities. It was shown by \cite{clustconf} that a similar construction can be associated to generalized associahedra for finite type cluster algebras. Further, \cite{had} extends this to any \emph{finite representation type} algebra $\A$. For each of these algebras, $\A$, the affine algebraic variety $U_\A$, its configuration space, is defined by a system of polynomial equations of the form
\begin{equation}\label{intro:uu}
u_X + \prod_{Y \in \I_\A} u_Y^{c(X,Y)} = 1 ,
\qquad X \in \I_\A ,
\end{equation}
for a finite set of variables with an index set $\I_\A$, and for some \emph{compatibility degree} $c(X,Y) = c(Y,X) \in \ZZ_{\ge 0}$ determined by the algebra. In physics, these varieties define stringy integrals, which are integrals over the positive part $U^{\geq 0}_\A$ of these varieties. These are meromorphic functions whose properties follow from the properties of $U_\A$, as explained in \cite{stringy}.
\smallskip

In this paper we give a combinatorial model to understand the generalization of these configuration spaces for a family of algebras. We focus on quotient algebras of the path algebra of the linearly oriented $A_n$ quiver, also called \emph{linear Nakayama algebras}. Analogous to the associahedron case, we can fully describe the equations \eqref{intro:uu} pictorially.

\subsection{The linear quotient algebras}
We write $\mathbb{C}A_n$ for the \emph{path algebra} of the linearly oriented $A_n$ quiver. This is simply the algebra with a $\CC$-basis given by the set of paths $p_{ij}$, $1\leq i \leq j \leq n$, where $p_{ij}$ is the oriented path from node $j$ to node $i$. Multiplication is given by path concatenation, so that $p_{ij} p_{kl} = 0$ if $j \neq k$ and $p_{ij} p_{jk} = p_{ik}$. This multiplication rule is identical to matrix multiplication for the $n \times n$ unit matrices $E_{ij}$ ($i \leq j$), which form a basis of the upper triangular matrix algebra. Some readers may prefer to think of $\CC A_n$ as this matrix algebra.

\smallskip

The subject of the paper is the quotient algebras
\[
\A = \mathbb{C}A_n / I
\]
for two-sided ideals $I$ (see Section~\ref{sec:2A}). To each quotient algebra we give an explicit combinatorial definition of the ``configuration space'' $U_\A$, which is an affine variety of dimension $n$, and an $n$-dimensional polytope $P_{\A}$. Our main results concern the properties of $U_\A$ and $P_\A$. The starting point is the combinatorics of \emph{Dyck path} diagrams. 

\smallskip

In fact, the two-sided ideals $I \unlhd \mathbb{C}A_n$, and hence the quotient algebras $\A$, are in bijection with the set of Dyck paths with $2n$ steps (Proposition \ref{prop:dyck}). This combinatorial bijection follows naturally from a simple picture for the modules of these algebras. Consider the linearly oriented $A_3$ quiver, which we orient from right to left,
\[
1 \xlongleftarrow[]{~\alpha_1~} 2  \xlongleftarrow[]{~\alpha_2~}  3.
\]
The indecomposable $\mathbb{C}A_3$-modules can be labeled by their support, for example $M_{12} = \CC \leftarrow \CC \leftarrow 0$, and we arrange the modules in a triangular grid (the Auslander-Reiten quiver):
\[
\begin{tikzpicture}[every node/.style={inner sep=2pt},scale=0.8]
  \node (M11) at (-2,0) {$M_{11}$};
  \node (M22) at ( 0,0) {$M_{22}$};
  \node (M33) at ( 2,0) {$M_{33}$};
  \node (M12) at (-1,1) {$M_{12}$};
  \node (M23) at ( 1,1) {$M_{23}$};
  \node (M13) at (0,2) {$M_{13}$};
    \draw[black] (M12) -- (M22) -- (-1,-1) -- (M11) --  (M12);
  \draw[black] (M22) -- (M23) -- (M33) -- (1,-1) --  (M22);
  \draw[black] (M12) -- (M13) -- (M23) -- (M22) -- (M12);
  \draw[black] (-3,-1) -- (M11) -- (M12) -- (M13) -- (M23) -- (M33) -- (3,-1);
\end{tikzpicture}
\]
A choice of ideal $I\unlhd \mathbb{C}A_3$ is encoded by a Dyck path drawn in the same grid. For instance, the Dyck path in red below
\[
\begin{tikzpicture}[every node/.style={inner sep=2pt},scale=0.8]
  \node (M11) at (-2,0) {$M_{11}$};
  \node (M22) at ( 0,0) {$M_{22}$};
  \node (M33) at ( 2,0) {$M_{33}$};
  \node (M12) at (-1,1) {$M_{12}$};
  \node (M23) at ( 1,1) {$M_{23}$};
  \node (M13) at (0,2) {$M_{13}$};
    \draw[gray] (M12) -- (M22) -- (-1,-1) -- (M11) --  (M12);
  \draw[gray] (M22) -- (M23) -- (M33) -- (1,-1) --  (M22);
  \draw[gray] (M12) -- (M13) -- (M23) -- (M22) -- (M12);
  \draw[thick, red] (-3,-1) -- (M11) -- (M12) -- (M22) -- (1,-1) -- (M33) -- (3,-1);
\end{tikzpicture}
\]
corresponds to the ideal $I=\langle \alpha_2\rangle$: the modules $M_{13}$ and $M_{23}$, which lie above the Dyck path, are precisely those which are not indecomposable modules of the quotient algebra $\A = \CC A_3 / I$.

\medskip

\subsection{Summary of results}
Fix a Dyck path $D$, and its corresponding quotient algebra $\A = \CC A_n / I$. Starting from the Dyck path diagram we define:
\begin{enumerate}
\item \emph{a polytope} $P_{\A}$ (equation \eqref{eq: polytope}),
\item \emph{an index set} $\I_{\A}$ (Definition~\ref{dfn:IADyck}), together with
\item \emph{a notion of compatibility and incompatibility} for pairs $X,Y \in \I_{\A}$ (Definition~\ref{dfn: comp degree}).
\end{enumerate}
We emphasize that these definitions have a combinatorial flavor. For example, $\I_\A$ is the set of steps of the Dyck path $D$, together with the squares that lie beneath it in the grid picture.
\smallskip

The index set $\I_\A$ labels the affine coordinates $u_X$ ($X \in \I_\A$) on the affine space where we define the configuration space $U_\A$. The equations that cut out $U_\A$ are given by the compatibility rule on pairs $X,Y \in \I_\A$ (Definition~\ref{dfn: u variety}). We show that the stratification of the nonnegative part $(U_\A)_{\geq 0}$ is given by pairwise compatible subsets of $\I_\A$, ordered by inclusion (Proposition~\ref{prop:fact}).
\smallskip

The polytope $P_\A$ is defined as a Minkowski sum of simplices associated to the points in the grid that lie \emph{on or below} the Dyck path $D$ (see Example~\ref{ex: intro} below). It can be helpful to view $P_\A$ as the moment polytope of its toric variety, $X_{P_\A}$, and we show that $U_\A$ is an affine open in this toric variety (Theorem~\ref{thm:toric}). In particular, we show that the face lattice of $P_\A$ is the same as the poset of strata of $(U_\A)_{\geq 0}$, by identifying $(U_\A)_{\geq 0}$ with the positive part of $X_{P_\A}$ (Corollary~\ref{cor:toric}). However, we study the polytopes $P_\A$ in a purely combinatorial way in Section \ref{sec:poly}. They are not generalized permutohedra in the sense of Postnikov \cite{sumsimplP}, but we can study them in a similar fashion. In particular, we show that $P_\A$ is a simple polytope (Proposition~\ref{prop:simple}).


\medskip

\begin{ex}\label{ex: intro}
We illustrate these definitions for the linearly oriented $A_2$ quiver
\[
1 \xlongleftarrow[]{~\alpha~} 2.
\]
There are two Dyck paths of length $2$, $D$ (a peak) and $\widetilde{D}$ (a valley). $D$ corresponds to the trivial ideal $I=\emptyset$ and the full path algebra $\A = \CC A_2$. Whereas $\widetilde{D}$ corresponds to the ideal $\widetilde{I}=\langle \alpha\rangle$ and quotient algebra ${\widetilde \A}=\CC A_2/\langle \alpha\rangle$. We draw these Dyck paths in the following grid pictures:
\begin{figure}[H]
\centering
\begin{subfigure}{0.48\textwidth}
\centering
\begin{tikzpicture}[scale=0.85,every node/.style={inner sep=2pt}]
  \node (M11) at (-1,0) {$11$};
  \node (M22) at ( 1,0) {$22$};
  \node (M12) at (0,1) {$12$};
  \draw[gray] (M11)--(M12)--(M22)--(0,-1)--(M11);
  \draw[thick, red] (-2,-1) -- (M11) -- (M12) -- (M22) -- (2,-1);
\end{tikzpicture}
\caption{The Dyck path $D$ for $I = \emptyset$.}
\end{subfigure}
\begin{subfigure}{0.48\textwidth}
\centering
\begin{tikzpicture}[scale=0.85,every node/.style={inner sep=2pt}]
  \node (M11) at (-1,0) {$11$};
  \node (M22) at ( 1,0) {$22$};
  \node (M12) at (0,1) {$12$};
  \draw[gray] (M11)--(M12)--(M22)--(0,-1)--(M11);
  \draw[thick, red] (-2,-1) -- (M11) -- (0,-1) -- (M22) -- (2,-1);
\end{tikzpicture}
\caption{The Dyck path $\widetilde{D}$ for $\widetilde{I}=\langle\alpha\rangle$.}
\end{subfigure}
\end{figure}

\noindent
The associated moment polytopes $P_{\A}$ and $P_{\widetilde{\A}}$ can be defined as a Minkowski sum, using these pictures. Each node of the grid diagram is labelled by an indecomposable $\A$-module, $M_{11}$, $M_{22}$, and $M_{12}$. To each of these, we associate a polynomial,
\[
F_{11}=1+y_1,\qquad F_{22}=1+y_2,\qquad F_{12}=1+y_1+y_1y_2,
\]
called \emph{$F$-polynomials} (see e.g. \cite{had}).
The polytopes $P_\A$ and $P_{\widetilde \A}$ are given by taking the Minkowski sum of $\Newt(F_{ij})$ for each $M_{ij}$ that lies \emph{on or below} the corresponding Dyck path:
\[
P_{\A}=\Newt(F_{11})+\Newt(F_{22})+\Newt(F_{12}),\qquad P_{\widetilde{\A}}=\Newt(F_{11})+\Newt(F_{22}).
\]
This gives a pentagon and a square, respectively:
\begin{figure}[H]
\begin{subfigure}{0.48\textwidth}
\centering
\begin{tikzpicture}[scale=0.8]
  \node (A) at (0,0) {$\bullet$};
  \node (B) at (0,1.3) {$\bullet$};
  \node (C) at (1.3,2.6) {$\bullet$};
  \node (D) at (2.6,2.6) {$\bullet$};
  \node (E) at (2.6,0) {$\bullet$};
  \draw[thick] (A) --node[midway,left]{$\Sigma 1$} (B) --node[midway,above left]{$12$} (C) --node[midway, above]{$2$} (D) --node[midway, right]{$1$} (E) --node[midway,below]{$\Sigma 2$} (A);
\end{tikzpicture}
\caption{The polytope $P_{\A}$.}
\end{subfigure}
\begin{subfigure}{0.48\textwidth}
\centering
\begin{tikzpicture}
  \node (A) at (0,0) {$\bullet$};
  \node (B) at (2,0) {$\bullet$};
  \node (C) at (2,2) {$\bullet$};
  \node (D) at (0,2) {$\bullet$};
  \draw[thick] (A) --node[midway,below]{$\Sigma 2$}  (B) --node[midway, right]{$1$} (C) --node[midway, above]{$2$} (D) --node[midway,left]{$\Sigma 1$} (A);
\end{tikzpicture}
\caption{The polytope $P_{\widetilde \A}$.}
\end{subfigure}
\end{figure}

\noindent
The faces of each polytope are labelled by the set of steps of the corresponding Dyck path, together with the diamonds that lie below the Dyck path. We write $1,2,\ldots$ for the up steps of the path and $\Sigma 1, \Sigma 2,\ldots$ for the down steps, and we label the diamond below $M_{12}$ as $12$. Then the faces of the polytopes $P_\A$ and $P_{\widetilde \A}$ are labelled by (Definition~\ref{dfn:IADyck})
\[
\I_\A = \{ 1,2,\Sigma 1, \Sigma 2, 12\},\qquad \I_{\widetilde \A} = \{1,2,\Sigma 1, \Sigma 2\}.
\]
The varieties $U_\A$ and $U_{\widetilde \A}$ are given by affine equations in $\CC^{\I_\A}$ and $\CC^{\I_{\widetilde \A}}$, respectively. For $U_\A$, we introduce variables $u_X$ ($X \in \I_\A$), and the equations are
\[
u_{12} + u_1u_{\Sigma 2}=1, \qquad u_1 + u_{12}u_{\Sigma 1}=1,\qquad u_2 + u_{\Sigma 1}u_{\Sigma 2}=1, \qquad u_{\Sigma 1}+u_1u_2=1,\qquad u_{\Sigma 2}+u_2u_{12}=1.
\]
For $\widetilde{\A}$, we have variables $\widetilde{u}_X$ ($X \in \I_{\widetilde \A}$) and $U_{\widetilde \A}$ is cut out by the equations
\[
\widetilde{u}_1+ \widetilde{u}_{\Sigma 1}=1,\qquad \widetilde{u}_2+\widetilde{u}_{\Sigma 2}=1.
\]
$U_\A$ and $U_{\widetilde A}$ are 2-dimensional varieties. The positive part of $U_\A$ has five boundaries and the positive part of $U_{\widetilde A}$ has four boundaries, i.e. they are combinatorially a pentagon and a square.
\end{ex}

\bigskip

The Dyck paths of fixed length form a natural poset (i.e. $D \leq D'$ if the path $D$ lies on or below $D'$). For an ordered pair of Dyck paths $\widetilde D\le D$ their corresponding ideals are ordered by inclusion $I_{\widetilde D}\supseteq I_D$. In Theorem~\ref{thm:surj}, we show how this poset structure is also realized among the configuration spaces $U_\A$. For any relation $\widetilde{D} \leq D$, we define a monomial map $\phi : \CC[U_{\widetilde \A}] \rightarrow \CC[U_\A]$ of coordinate rings which induces a surjection $\phi^* : U_\A \rightarrow U_{\widetilde \A}$ (Definition~\ref{dfn: monomial maps}). The monomial map $\phi$ can be defined from the Dyck path pictures: it is given by looking at the diamonds that lie between $\widetilde{D}$ and $D$. We show that the maps $\phi$ realize the poset structure faithfully.

\begin{ex}
In the $A_2$ example above, the varieties $U_\A$ and $U_{\widetilde \A}$ are related by the map
\[
\phi: \widetilde{u}_1 \mapsto u_1,\qquad \widetilde{u}_2 \mapsto u_2 u_{12},\qquad \widetilde{u}_{\Sigma 1} \mapsto u_{12} u_{\Sigma 1},\qquad \widetilde{u}_{\Sigma 2} \mapsto u_{\Sigma 2},
\]
and one checks that the pullback $\phi^*$ defines a surjection onto $U_{\widetilde \A}$.
\end{ex}
\medskip

The toric viewpoint mentioned above provides an alternative interpretation of these monomial maps. Recall, we identify $U_{\A}$ as an affine open in a projective toric variety with moment polytope $P_{\A}$ (Theorem~\ref{thm:toric}). Under this identification, the monomial maps associated to relations in the Dyck path poset can be interpreted as toric blow-down maps (Proposition~\ref{prop: toric}).

\subsection{Outline}
Section~\ref{sec:2} establishes the correspondence between Dyck paths and quotient algebras of type $A$. Section~\ref{sec:3} defines the configuration spaces $U_\A$ for each Dyck path, and studies the monomial maps between them. In this section, Proposition~\ref{prop:fact} characterizes the coordinate divisors of $U_\A$. Section~\ref{sec:F} constructs the $F$-polynomial parametrization of $U_{\A}$ (Theorem~\ref{thm:param}) and studies the toric embedding. Section~\ref{sec:poly} gives a complete combinatorial description of the polytopes $P_\A$ and their face lattices and in particular shows that they are simple.
\medskip

\paragraph{Acknowledgements.}
We thank Hugh Thomas, Bernd Sturmfels, Dhruv Sidana, and Sagnik Das for helpful comments. HF is supported by the Sivian Fund. Both authors received additional support from the European Union (ERC, UNIVERSE PLUS, 101118787).\footnote{\tiny Views and opinions expressed are however those of the author(s) only and do not necessarily reflect those of the European Union or the European Research Council Executive Agency. Neither the European Union nor the granting authority can be held responsible for them.}

\section{Quotient Algebras and Dyck Paths}\label{sec:2}
In this section we set up the combinatorial model for our paper. We start by recalling some quiver representation theory restricted to our setting (we refer to \cite{assoc,QuiverBook} for a detailed exposition).

\subsection{Type $A$ path algebras}\label{sec:2A}
Our starting point is the linearly oriented $A_n$ quiver:
\[
   1 \longleftarrow 2  \longleftarrow 3 \longleftarrow \cdots \longleftarrow n-1 \longleftarrow n.
\]
We now consider paths in this quiver: we denote $e_1,\ldots, e_n$ for the trivial paths where we remain at a vertex and $\alpha_i, 1\leq i \leq n-1$ for the directed edges $i+1 \to i$. Then, all non trivial paths are concatenations of compatible edges $\alpha_j\alpha_{j-1}\cdots \alpha_i$ for pairs $1\leq i \leq j \leq n-1$. The \emph{path algebra} of $A_n$, denoted $\CC A_n$, is the algebra with basis all paths of $A_n$ and multiplication is concatenation.

\smallskip

A module $M$ for $\CC A_n$ is equivalently a \emph{quiver representation} of $A_n$. A quiver representation of $A_n$ (with linear orientation as above) is a vector space $V_i$ for each vertex and a collection of linear maps, $f_i: V_{i+1} \to V_{i}$, for each arrow. $M$ is called \emph{indecomposable} if it cannot be written as a direct sum of non-trivial modules. We can explicitly describe all indecomposable modules for $\CC A_n$ up to isomorphism. They are given by choosing a subset of adjacent vertices in $A_n$ to which we assign $V_i = \CC$, the one dimensional vector space, while all other vertices are zero. The maps $f_i$ are given by the identity if $V_i, V_{i+1}$ are both non-zero. We label these modules $M_{i,j}$ for $1\leq i \leq j \leq n$. As a quiver representation $M_{i,j}$ is
\[
    0 \xlongleftarrow[]{0} \cdots \xlongleftarrow[]{0} \underbrace{\CC}_i \xlongleftarrow[]{\text{id}} \CC \xlongleftarrow[]{\text{id}} \cdots \xlongleftarrow[]{\text{id}} \underbrace{\CC}_j \xlongleftarrow[]{0} 0 \xlongleftarrow[]{0} \cdots \xlongleftarrow[]{0} 0.
\]
We will draw diagrams that place these modules in a grid where each module $M_{i,j}$ is at location $(i,j)$. For convenience, we draw this grid as a triangle above the diagonal. For example, the indecomposable modules of $A_3$ are
\begin{equation}\label{eq:grid3}
\begin{tikzpicture}[every node/.style={inner sep=2pt},scale=0.9]
  \node (M11) at (-2,0) {$M_{11}$};
  \node (M22) at ( 0,0) {$M_{22}$};
  \node (M33) at ( 2,0) {$M_{33}$};
  \node (M12) at (-1,1) {$M_{12}$};
  \node (M23) at ( 1,1) {$M_{23}$};
  \node (M13) at (0,2) {$M_{13}$};
    \draw[black] (M12) -- (M22) -- (-1,-1) -- (M11) --  (M12);
  \draw[black] (M22) -- (M23) -- (M33) -- (1,-1) --  (M22);
  \draw[black] (M12) -- (M13) -- (M23) -- (M22) -- (M12);
  \draw[black] (-3,-1) -- (M11) -- (M12) -- (M13) -- (M23) -- (M33) -- (3,-1);
\end{tikzpicture}
\end{equation}
Drawn this way, these grid diagrams agree with the conventions for drawing the \emph{Auslander-Reiten quiver} of the module category.

The family of algebras in which we focus, namely linear Nakayama algebras, are precisely the quotient algebras of $\CC A_n$. For this we consider two-sided ideals $I \unlhd \CC A_n$. Let $\A$ be such a quotient algebra $\CC A_n/I$. $\A$-modules correspond in quiver language to representations of \emph{quivers with relations}, which can be denoted by the pair $(A_n,I)$. (See \cite[Chapters II and III]{assoc}.)

\begin{ex}\label{ex: A3}
Consider $1 \leftarrow 2 \leftarrow 3$. The path algebra $\CC A_3$ has six indecomposable modules,
\begin{align*}
	M_{1,1} && \CC \leftarrow 0 \leftarrow 0 && ~ && M_{2,2} && 0 \leftarrow \CC \leftarrow 0\\
	M_{1,2} && \CC \leftarrow \CC \leftarrow 0 && ~ && M_{2,3} && 0 \leftarrow \CC \leftarrow \CC\\
	M_{1,3} && \CC\leftarrow \CC \leftarrow \CC && ~ && M_{3,3} && 0 \leftarrow 0 \leftarrow \CC
\end{align*}
$\CC A_3$ has five two-sided ideals without idempotents which naturally form a poset, by inclusion:
\[
\begin{tikzpicture}[
  every node/.style={inner sep=2pt},
  >=stealth,
  ed/.style={thick}, lab/.style={midway, inner sep=2pt}
]
  \node (a) at (0,0) {$\emptyset$};
  \node (b) at (2,0) {$\langle \alpha_2 \alpha_1 \rangle$};
  \node (c) at (4,1) {$\langle \alpha_1\rangle$};
  \node (d) at (4,-1) {$\langle \alpha_2 \rangle$};
  \node (e) at (6,0) {$\langle \alpha_1, \alpha_2 \rangle$};
\draw[ed] (a) -- (b) node[lab, above] {$\subseteq$};
  \draw[ed] (b) -- (c) node[lab, above=2pt] {$\subseteq$};
  \draw[ed] (b) -- (d) node[lab, below=2pt] {$\subseteq$};
  \draw[ed] (c) -- (e) node[lab, above=2pt] {$\subseteq$};
  \draw[ed] (d) -- (e) node[lab, below=2pt] {$\subseteq$};
\end{tikzpicture}
\]
The indecomposable modules of each of the quotient algebras $\CC A_3/I$ are identified with subsets of the six indecomposable modules of $\CC A_3$. Take $I =\langle \alpha_2\alpha_1 \rangle$ and $\A = \CC A_3/I$. The module $M_{1,3}$ does not satisfy the relations of $I$: the composition, in $M_{1,3}$, of the two linear maps corresponding to $\alpha_1$ and $\alpha_2$, is the identity map (i.e. not zero). All other $M_{i,j}$ modules satisfy the relations and hence correspond to $\A$-modules.
\end{ex}

\subsection{Dyck paths and combinatorial model}
We now give a combinatorial way to build all the quotient algebras $\mathbb{C}A_n/I$ and identify their indecomposable modules by using \emph{Dyck paths}. For us, a \emph{Dyck path of length $n$} is a lattice walk in our grid picture for the indecomposable modules of $\mathbb{C}A_n$ (e.g. see \eqref{eq:grid3} for $n=3$). Each Dyck path begins at position $(1,0)$ on the left, and ends at $(n+1,n)$ on the right, and remains above the baseline. Each step is either an up step, $(i,j) \rightarrow (i,j+1)$, or a down step, $(i,j) \rightarrow (i+1,j)$. For example, we can draw the following Dyck path (shown in red) for $n=3$:
\[
\begin{tikzpicture}[every node/.style={inner sep=2pt},scale=0.9]
  \node (M11) at (-2,0) {$M_{11}$};
  \node (M22) at ( 0,0) {$M_{22}$};
  \node (M33) at ( 2,0) {$M_{33}$};
  \node (M12) at (-1,1) {$M_{12}$};
  \node (M23) at ( 1,1) {$M_{23}$};
  \node (M13) at (0,2) {$M_{13}$};
  \draw[thick, red] (-3,-1) -- (M11) -- (-1,-1) -- (M22) -- (M23) -- (M33) -- (3,-1);
  \draw[gray] (M11) -- (M12) -- (M22) -- (1,-1) -- (M33);
  \draw[gray] (M12) -- (M13) -- (M23);
\end{tikzpicture}
\]

\newcommand{\picEmpty}{%
  \begin{tikzpicture}[baseline=-0.6ex,scale=0.4]
  \coordinate (M11) at (-2,0);
  \coordinate (M22) at ( 0,0);
  \coordinate (M33) at ( 2,0);
  \coordinate (M12) at (-1,1);
  \coordinate (M23) at ( 1,1);
  \coordinate (M13) at (0,2);
  \coordinate (P1) at (-3,-1);
  \coordinate (P2) at (-1,-1);
  \coordinate (P3) at (1,-1) ;
  \coordinate (P4) at (3,-1) ;
    \draw[gray] (M12) -- (M22) -- (P2) -- (M11) --  (M12);
  \draw[gray] (M22) -- (M23) -- (M33) -- (P3) --  (M22);
  \draw[gray] (M12) -- (M13) -- (M23) -- (M22) -- (M12);
  \draw[thick, red] (P1) -- (M11) -- (M12) -- (M13) -- (M23) -- (M33) -- (P4);
  \end{tikzpicture}%
}
\newcommand{\picTwoOne}{%
  \begin{tikzpicture}[baseline=-0.6ex,scale=0.4]
  \coordinate (M11) at (-2,0);
  \coordinate (M22) at ( 0,0);
  \coordinate (M33) at ( 2,0);
  \coordinate (M12) at (-1,1);
  \coordinate (M23) at ( 1,1);
  \coordinate (M13) at (0,2);
  \coordinate (P1) at (-3,-1);
  \coordinate (P2) at (-1,-1);
  \coordinate (P3) at (1,-1) ;
  \coordinate (P4) at (3,-1) ;
    \draw[gray] (M12) -- (M22) -- (P2) -- (M11) --  (M12);
  \draw[gray] (M22) -- (M23) -- (M33) -- (P3) --  (M22);
  \draw[gray] (M12) -- (M13) -- (M23) -- (M22) -- (M12);
  \draw[thick, red] (P1) -- (M11) -- (M12) -- (M22) -- (M23) -- (M33) -- (P4);
  \end{tikzpicture}%
}

\newcommand{\picOne}{%
  \begin{tikzpicture}[baseline=-0.6ex,scale=0.4]
  \coordinate (M11) at (-2,0);
  \coordinate (M22) at ( 0,0);
  \coordinate (M33) at ( 2,0);
  \coordinate (M12) at (-1,1);
  \coordinate (M23) at ( 1,1);
  \coordinate (M13) at (0,2);
  \coordinate (P1) at (-3,-1);
  \coordinate (P2) at (-1,-1);
  \coordinate (P3) at (1,-1) ;
  \coordinate (P4) at (3,-1) ;
    \draw[gray] (M12) -- (M22) -- (P2) -- (M11) --  (M12);
  \draw[gray] (M22) -- (M23) -- (M33) -- (P3) --  (M22);
  \draw[gray] (M12) -- (M13) -- (M23) -- (M22) -- (M12);
  \draw[thick, red] (P1) -- (M11) -- (P2) -- (M22) -- (M23) -- (M33) -- (P4);
  \end{tikzpicture}%
}

\newcommand{\picTwo}{%
  \begin{tikzpicture}[baseline=-0.6ex,scale=0.4]
  \coordinate (M11) at (-2,0);
  \coordinate (M22) at ( 0,0);
  \coordinate (M33) at ( 2,0);
  \coordinate (M12) at (-1,1);
  \coordinate (M23) at ( 1,1);
  \coordinate (M13) at (0,2);
  \coordinate (P1) at (-3,-1);
  \coordinate (P2) at (-1,-1);
  \coordinate (P3) at (1,-1) ;
  \coordinate (P4) at (3,-1) ;
    \draw[gray] (M12) -- (M22) -- (P2) -- (M11) --  (M12);
  \draw[gray] (M22) -- (M23) -- (M33) -- (P3) --  (M22);
  \draw[gray] (M12) -- (M13) -- (M23) -- (M22) -- (M12);
  \draw[thick, red] (P1) -- (M11) -- (M12) -- (M22) -- (P3) -- (M33) -- (P4);
  \end{tikzpicture}%
}

\newcommand{\picBottom}{%
  \begin{tikzpicture}[baseline=-0.6ex,scale=0.4]
  \coordinate (M11) at (-2,0);
  \coordinate (M22) at ( 0,0);
  \coordinate (M33) at ( 2,0);
  \coordinate (M12) at (-1,1);
  \coordinate (M23) at ( 1,1);
  \coordinate (M13) at (0,2);
  \coordinate (P1) at (-3,-1);
  \coordinate (P2) at (-1,-1);
  \coordinate (P3) at (1,-1) ;
  \coordinate (P4) at (3,-1) ;
    \draw[gray] (M12) -- (M22) -- (P2) -- (M11) --  (M12);
  \draw[gray] (M22) -- (M23) -- (M33) -- (P3) --  (M22);
  \draw[gray] (M12) -- (M13) -- (M23) -- (M22) -- (M12);
  \draw[thick, red] (P1) -- (M11) -- (P2) -- (M22) -- (P3) -- (M33) -- (P4);
  \end{tikzpicture}%
}

\begin{figure}
\centering
\begin{tikzpicture}[
  every node/.style={inner sep=2pt},
  >=stealth,
  ed/.style={thick}, lab/.style={midway, inner sep=2pt}
]
  \node (a) at (-2,0) {\picEmpty};
  \node (b) at (2,0) {\picTwoOne};
  \node (c) at (6,1) {\picOne};
  \node (d) at (6,-1) {\picTwo};
  \node (e) at (10,0) {\picBottom};
  \draw[ed] (a) -- (b) node[lab, above=1pt] {$\geq$};
  \draw[ed] (b) -- (c) node[lab, above=2pt] {$\geq$};
  \draw[ed] (b) -- (d) node[lab, below=2pt] {$\geq$};
  \draw[ed] (c) -- (e) node[lab, above=2pt] {$\geq$};
  \draw[ed] (d) -- (e) node[lab, below=2pt] {$\geq$};
\end{tikzpicture}
\caption{Dyck path poset for $n=3$.}
\label{fig:DyckPoset3}
\end{figure}
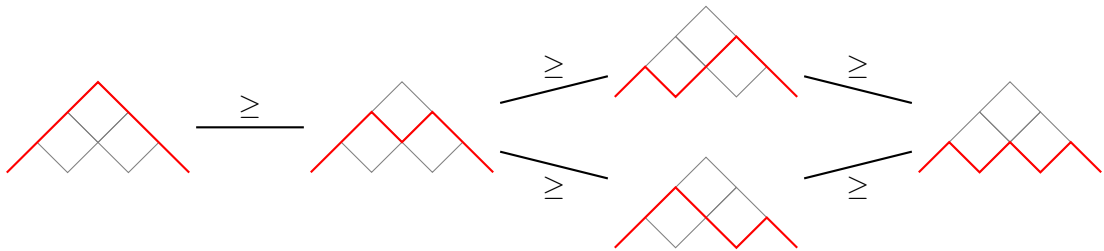

Write $\D_n$ for the collection of all Dyck paths of length $n$. $\D_n$ is a poset, with partial order given by $D \leq D'$ if and only if $D$ lies on/below $D'$. For example, $\D_3$ is the poset in Figure~\ref{fig:DyckPoset3}. The similarity of Figure \ref{fig:DyckPoset3} with the poset in Example~\ref{ex: A3} is not a coincidence. In fact, to each Dyck path $D \in \D_n$ we can associate an ideal $I_D$ in $\CC A_n$, which we define as follows. The generators of $I_D$ correspond to the valleys of $D$, those points that are preceded by a down step and followed by an up step. If $(a,b)$ is a valley of $D$, we consider the path from $b+1$ to $a-1$ in the $A_n$ quiver, which defines the element $p_{a,b}:=\alpha_{b}\cdots \alpha_{a-1}$ in the path algebra $\mathbb{C} A_n$. Then let $I_D$ be the two-sided ideal generated by these paths,
\begin{equation}\label{eq: ideal Dyck path}
    I_D := \langle p_{a,b} \, | \, (a,b) \text{ is a valley of } D \rangle.
\end{equation}

\begin{ex}
In the $n=3$ grid, there are precisely three lattice points where a Dyck path can have a valley. These are $(2,1)$, $(2,2)$, and $(3,2)$, which correspond to the paths $p_{2,1} = \alpha_1$, $p_{2,2} = \alpha_2 \alpha_1$, and $p_{3,2} = \alpha_2$, respectively. Given this, the ideals $I_D$ defined by each of the Dyck paths in Figure \ref{fig:DyckPoset3} correspond precisely to the poset of ideals of $\mathbb{C}A_3$ in Example~\ref{ex: A3}.
\end{ex}

We formally state the correspondence between ideals in $\CC A_n$ and Dyck paths in $\mathcal{D}_n$ as a poset isomorphism in the following Proposition.

\begin{prop}\label{prop:dyck}
The map $D \mapsto I_D$ is a poset isomorphism between $(\D_n,\geq)$ and two-sided ideals of $\CC A_n$ without idempotents ordered by inclusion. Moreover, the indecomposable modules of $\CC A_n/I_D$ are in one-to-one correspondence with the off-diagonal points on and below $D$.
\end{prop}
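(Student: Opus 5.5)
The approach is to encode everything through the basis of paths. Write $p_{ij}$ ($i\le j$) for the path from $j$ to $i$, and identify it with the grid point $(i,j)$; trivial paths are the diagonal points $(i,i)$. The first observation is that every two-sided ideal $I\unlhd \CC A_n$ is spanned by the paths it contains. To see this, multiply an element $x=\sum c_{ij}p_{ij}\in I$ on the left by $e_i$ and on the right by $e_j$; this isolates $c_{ij}p_{ij}$. So $I$ is determined by the set $S_I=\{(i,j): p_{ij}\in I\}$.

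Two-sidedness means $S_I$ is an up-set under the order generated by $(i,j)\le(i',j)$ for $i'\le i$ and $(i,j)\le(i,j')$ for $j'\ge j$, i.e.\ it is closed under extending paths at either end. The condition "without idempotents" means $S_I$ contains no diagonal point. Conversely, any up-set of off-diagonal points spans a two-sided ideal. Such up-sets (order ideals in the staircase above the diagonal) are exactly the regions strictly above a Dyck path. Their complements, the down-closed regions containing the diagonal, are bounded by a unique lattice path from $(1,0)$ to $(n+1,n)$ staying weakly above the baseline.

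Next I match this with the definition \eqref{eq: ideal Dyck path}. A valley $(a,b)$ of $D$ sits directly below a minimal point of the region above $D$. With the paper's indexing, the generator $p_{a,b}=\alpha_b\cdots\alpha_{a-1}$ is exactly the path at grid position $(a-1,b+1)$, which is the corner cell above the valley. The minimal elements of an up-set generate it as an up-set, and hence generate the ideal. Therefore $I_D$ is precisely the span of the paths strictly above $D$. This gives injectivity and surjectivity of $D\mapsto I_D$. Order-reversal is immediate: $D\le D'$ if and only if the region above $D$ contains the region above $D'$, if and only if $I_D\supseteq I_{D'}$.

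For the modules, an indecomposable $\CC A_n/I_D$-module is an indecomposable $\CC A_n$-module annihilated by $I_D$. These are the $M_{ij}$ on which every path in $I_D$ acts by zero. The path $p_{kl}$ acts nontrivially on $M_{ij}$ exactly when $[k,l]\subseteq[i,j]$. So $M_{ij}$ survives if and only if no point $(k,l)$ with $i\le k\le l\le j$ lies above $D$. Since the region above $D$ is an up-set, this holds if and only if $(i,j)$ itself is on or below $D$. This gives the stated bijection with points on and below $D$ (the "off-diagonal" phrasing refers to the grid shifted as in the paper's pictures).

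The main obstacle is purely bookkeeping: reconciling the paper's shifted coordinates for valleys, namely $(a,b)\mapsto$ the path from $b+1$ to $a-1$, with the module positions. I would check the correspondence carefully on the $n=3$ example already given before writing the general statement.
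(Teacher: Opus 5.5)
Your proposal is correct and follows essentially the paper's route. Ideals without idempotents become up-sets of paths under extension, these are matched with the regions strictly above Dyck paths, and $M_{ij}$ survives exactly when no path inside $[i,j]$ lies in $I_D$. Along the way you fill in details the paper cites or leaves implicit: the $e_i\,x\,e_j$ argument that ideals are spanned by paths, the boundary-path construction, and the identification of the valley generators $p_{a,b}$ with the cells $(a-1,b+1)$.

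State explicitly the converse of your valley observation. Every minimal cell strictly above $D$ has both lower neighbours on $D$, and so sits directly over a valley. That is what guarantees the valley generators produce the whole up-set rather than only part of it.
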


\begin{proof}
Let us prove the first part. Note that we can identify an ideal $I \subset \CC A_n$ (with no idempotents) with the collection of paths $\mathcal{P}_I$ in $A_n$ that belong to $I$ (this will in particular generate $I$). Consider the poset of paths $(\Pi, \preccurlyeq)$ with the natural order of subpaths. The collection of paths corresponding to ideals satisfy the following property:
\begin{center}
If $\alpha_i\cdots \alpha_j \in \mathcal{P}_I$ then any concatenation $\alpha_{i'}\cdots\alpha_{j'}$ with $i'\leq i$ and $j\leq j'$ is also in $\mathcal{P}_I$.
\end{center}
Hence, $\mathcal{P}_I$ are upsets of $(\Pi, \preccurlyeq)$. Furthermore, the poset of ideals of $\CC A_n$ is isomorphic to the poset of order ideals (using upsets instead of the usual downset convention) of $\Pi$. Note that we can associate to each path $p$ a tuple $(i,j)$ indicating the vertices where the path starts and finishes. The subpath relation $\preccurlyeq$ translates to $(i,j) \preccurlyeq (i',j')$ if and only if $i\geq i'$ and $j' \geq j$. The poset isomorphism then follows from \cite[Section 3.3]{dyckPaths} where it is proven that $(\D_n,\geq)$ is the order ideal poset of such tuples.

\smallskip

To prove the second part, note that the indecomposable modules of $\CC A_n/I_D$ are precisely the indecomposable modules of $\CC A_n$ that satisfy the relations in $I_D$. Let $p=\alpha_l\alpha_{l-1}\cdots \alpha_k \in \CC A_n$. The module $M_{i,j}$ satisfies the relation $p=0$ if and only if $i\geq k$ or $j\leq l+1$. Then, $M_{i,j}$ is a $\CC A_n/I_D$ indecomposable module if and only if for all valleys of $D$, say $(a,b)$, we have $i \geq a$ or $j\leq b$ which is equivalent to asking that $M_{i,j}$ is on or below $D$.
\end{proof}

In view of Proposition~\ref{prop:dyck}, given a Dyck path $D$, it is convenient to draw a grid diagram that includes only the points on or below $D$. These points then correspond to the irreducible modules of $\mathbb{C}A_n / I_D$. For example, taking the ideal $I = \langle \alpha_1 \rangle$, we draw the correspond Dyck path, $D$, which has one valley at $(2,1)$, and then the $4$ indecomposable modules of $\mathbb{C}A_3 / \langle \alpha_1 \rangle$ are
\[
\begin{tikzpicture}[every node/.style={inner sep=2pt},scale=0.85]
  \node (M11) at (-2,0) {$M_{11}$};
  \node (M22) at ( 0,0) {$M_{22}$};
  \node (M33) at ( 2,0) {$M_{33}$};
  \node (M23) at ( 1,1) {$M_{23}$};
  \draw[thick, black] (-3,-1) -- (M11) -- (-1,-1) -- (M22) -- (M23) -- (M33) -- (3,-1);
  \draw[thick, black] (M22) -- (1,-1) -- (M33);
\end{tikzpicture}
\]
We call such a diagram a \emph{truncated grid}.

\section{Monomial Maps and $u$ equations}\label{sec:3}
In this section we introduce the configuration space of a linear Nakayama algebra. Recall that in the previous section, we saw that the quotient algebras $\A = \CC A_n/I_D$ correspond one-to-one to Dyck paths $D$. We will use the combinatorics of Dyck paths to associate an affine variety $U_\A$ to each $\A$. Moreover, we show that the assignment $U_\A$ to $\A$ is functorial: each algebra quotient, $\A \rightarrow \B$, or equivalently each pair of ideals $I \subset J$, corresponds to a surjective map $U_\A \rightarrow U_{\B}$. These surjections realize the poset of ideals/Dyck paths discussed in the previous section.

\subsection{The affine varieties}
Fix a Dyck path $D$ and its associated quotient algebra $\A$ as in \eqref{eq: ideal Dyck path}. The varieties $U_\A$ we will consider naturally live in the affine space $\CC^{\I_\A}$ whose coordinates $u_X$ are indexed by the set $\I_\A$ given in Definition~\ref{dfn:IADyck}.
\begin{dfn}\label{dfn:IADyck}
Let $D$ be a Dyck path and $\A = \CC A_n/I_D$. We define an indexing set $\I_\A$, given as the set of steps of $D$ together with all the diamonds that lie below $D$ in the truncated grid.
\end{dfn}
\noindent
For convenience, we label, in order, the up steps of $D$ as $1,2,\ldots,n$, and the down steps of $D$ as $\Sigma 1, \Sigma 2, \ldots, \Sigma n$. The diamonds below $D$ are labelled by their the coordinates of their top corner. For example, Figure \ref{fig:IA5} shows the labels of $\I_\A$ for the truncated grid picture for $\A = \CC A_5 / \langle \alpha_2\alpha_1 \rangle$.

\begin{figure}[h!]
\centering
\begin{tikzpicture}[scale=1,every node/.style={inner sep=2pt}]
  \coordinate (M0) at (-3,-1);
  \coordinate (M1) at (-1,-1);
  \coordinate (M2) at (1,-1);
  \coordinate (M3) at (3,-1);
  \coordinate (M4) at (5,-1);
  \coordinate (M5) at (7,-1);
  \coordinate (M11) at (-2,0);
  \coordinate (M22) at ( 0,0);
  \coordinate (M33) at ( 2,0);
  \coordinate (M44) at ( 4,0);
  \coordinate (M55) at ( 6,0);
  \coordinate (M12) at (-1,1);
  \coordinate (M23) at ( 1,1);
  \coordinate (M34) at ( 3,1);
  \coordinate (M45) at ( 5,1);
  \coordinate (M24) at (2,2);
  \coordinate (M35) at (4,2);
  \coordinate (M25) at (3,3);
{\small
  \node (U11) at (-1,0) {${12}$};
  \node (U22) at ( 1,0) {${23}$};
  \node (U33) at ( 3,0) {${34}$};
  \node (U44) at ( 5,0) {${45}$};
  \node (U23) at (2,1) {${24}$};
  \node (U34) at (4,1) {${35}$};
  \node (U24) at (3,2) {${25}$};
}
  \draw[thick] (M11) -- (M1) -- (M22) -- (M2) -- (M33) -- (M3) -- (M44) -- (M4) -- (M55);
  \draw[thick] (M23) -- (M33) -- (M34) -- (M44) -- (M45);
  \draw[thick] (M24) -- (M34) -- (M35);
  {\small
  \draw[thick] (M0) --node[midway, above left]{$1$} (M11) --node[midway, above left]{$2$} (M12) --node[midway, above right]{$\Sigma 1$} (M22) --node[midway, above left]{$3$} (M23) --node[midway, above left]{$4$} (M24) -- node[midway, above left]{$5$} (M25) --node[midway, above right]{$\Sigma 2$} (M35) --node[midway, above right]{$\Sigma 3$} (M45) --node[midway, above right]{$\Sigma 4$} (M55) --node[midway, above right]{$\Sigma 5$} (M5);
  }
\end{tikzpicture}
\caption{The index set $\I_\A$ for $\A=\CC A_5/\langle \alpha_1\alpha_2\rangle$ is given by the steps of the Dyck path $D$ (labelled $1,2,3,4,5$ and $\Sigma 1, \Sigma 2, \Sigma 3, \Sigma 4, \Sigma 5$) and the diamonds beneath the Dyck path (labelled $ij$).}
\label{fig:IA5}
\end{figure}
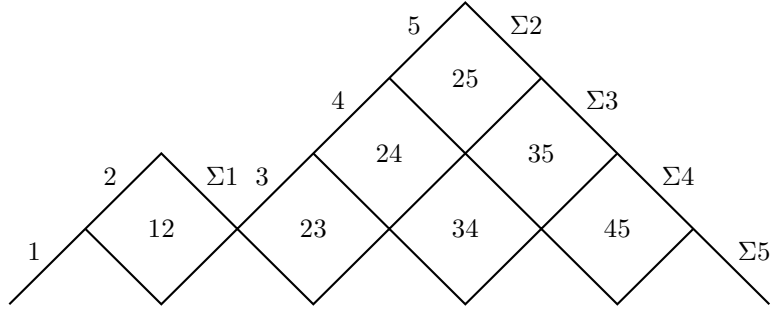

\begin{rmk}
In representation theory language, $\I_\A$ is the set of indecomposable objects in the category of 2-term projective complexes in $K^b(\text{proj} \A)$. Each up step of the Dyck path corresponds to a projective object $(0 \rightarrow P)$ in this category, where $P$ is a projective $\A$-module. Each down step corresponds to an injective object $(P \rightarrow 0)$. And each diamond $(i,j)$ corresponds to a 2-term projective presentation $(P \rightarrow P')$ of a non-projective $\A$-module. See \cite{had} for details.
\end{rmk}

\begin{dfn}\label{dfn: comp degree}
    Given two elements $X,Y \in \I_\A$ we say they are \emph{compatible} if the following~holds.
    \begin{itemize}
\item $X = (i,j)$ and $Y = (k,l)$ are compatible if the intervals $[i,j]$ and $[k,l]$ are
	\begin{itemize}
	\item comparable, $[i,j] \subset [k,l]$ or $[i,j] \supset [k,l]$, or
	\item do not intersect, $[i,j] \cap [k,l] = \emptyset$.
	\end{itemize}
\item $X = (i,j)$ and $Y = (k)$ are compatible if $k<i$ or $j\leq k$.
\item $X = (i,j)$ and $Y = (\Sigma k)$ are compatible if $k \leq i$ or $j<k$.
\item $X = (i)$ and $Y = (\Sigma j)$ are compatible if $i < j$ or the down step $\Sigma j$ comes before the up step $i$ in $D$. 
\end{itemize}
\end{dfn}

The rules for checking which pairs of elements are compatible or not have a simple interpretation in the truncated grid picture. To any diamond $(i,j)$ in the grid, we can draw the largest possible quadrilaterals to the left and right of the diamond. Any diamond in one of these quadrilaterals is incompatible with $(i,j)$, and any up-step in the left quadrilateral, or down-step in the right quadrilateral is also incompatible with $(i,j)$. Similarly, to any up step $(i)$ we draw the largest possible quadrilateral to the right, and to any down step $(\Sigma i)$ we draw the largest possible quadrilateral to the left. These \emph{light ray} pictures then determine all compatible pairs.
\begin{figure}[h!]
\centering
\begin{tikzpicture}[scale=1,every node/.style={inner sep=2pt}]
  \coordinate (M0) at (-3,-1);
  \coordinate (M1) at (-1,-1);
  \coordinate (M2) at (1,-1);
  \coordinate (M3) at (3,-1);
  \coordinate (M4) at (5,-1);
  \coordinate (M5) at (7,-1);
  \coordinate (M11) at (-2,0);
  \coordinate (M22) at ( 0,0);
  \coordinate (M33) at ( 2,0);
  \coordinate (M44) at ( 4,0);
  \coordinate (M55) at ( 6,0);
  \coordinate (M12) at (-1,1);
  \coordinate (M23) at ( 1,1);
  \coordinate (M34) at ( 3,1);
  \coordinate (M45) at ( 5,1);
  \coordinate (M24) at (2,2);
  \coordinate (M35) at (4,2);
  \coordinate (M25) at (3,3);
{\small
  \node[diamond, text=red, fill = red!20, minimum size= 2cm] (U11) at (-1,0) {${12}$};
  \node[blue] (U22) at ( 1,0) {${23}$};
  \node[blue] (U33) at ( 3,0) {${34}$};
  \node[diamond, text=red, fill = red!20, minimum size= 2cm] (U44) at ( 5,0) {${45}$};
  \node (U23) at (2,1) {$\mathbf{24}$};
  \node[diamond, text=red, fill = red!20, minimum size= 2cm] (U34) at (4,1) {${35}$};
  \node[blue] (U24) at (3,2) {${25}$};
}
  \draw[thick] (M11) -- (M1) -- (M22) -- (M2) -- (M33) -- (M3) -- (M44) -- (M4) -- (M55);
  \draw[thick] (M23) -- (M33) -- (M34) -- (M44) -- (M45);
  \draw[thick] (M24) -- (M34) -- (M35);
  {\small
  \draw[thick] (M0) --node[blue, midway, above left]{$1$} (M11) --node[red, midway, above left]{$2$} (M12) --node[blue, midway, above right]{$\Sigma 1$} (M22) --node[red, midway, above left]{$3$} (M23) --node[blue, midway, above left]{$4$} (M24) -- node[blue, midway, above left]{$5$} (M25) --node[blue, midway, above right]{$\Sigma 2$} (M35) --node[red, midway, above right]{$\Sigma 3$} (M45) --node[red, midway, above right]{$\Sigma 4$} (M55) --node[blue, midway, above right]{$\Sigma 5$} (M5);
  }
  \draw[thick, red] (M34) -- (M35);
  \draw[thick, red] (M34) -- (M44) -- (M4) -- (M55);
  \draw[thick, red] (M23) -- (M22) -- (M1) -- (M11);
\end{tikzpicture}
\caption{For $\A=\CC A_5/\langle \alpha_1\alpha_2\rangle$ and $X = 24 \in \I_\A$, the compatible $Y \in \I_\A$ (for which $c(X,Y) = 0$) are shown in blue, and the incompatible are shown in red.}
\label{fig:urel24}
\end{figure}
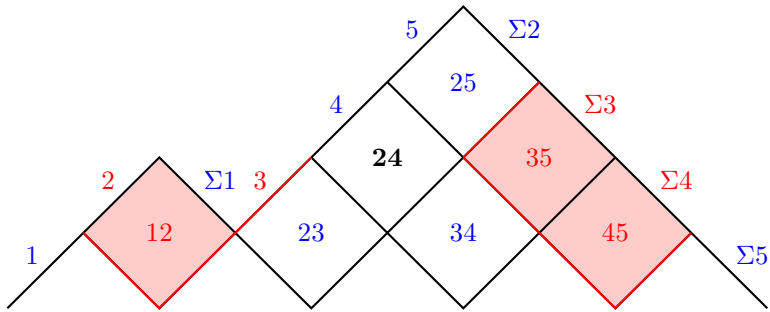

We now use the compatibility of elements in $\I_\A$ to define the configuration space of $\A$ as the solutions to $|\I_\A|$ many equations. It will be useful for us to consider both an affine variety $U_\A$ and a very affine variety $U_\A^\circ$ with the same set of generators.

\begin{dfn}\label{dfn: u variety}
    Let $\A$ be a linear Nakayama algebra and $\I_\A$ be as in Definition~\ref{dfn:IADyck}. The \emph{configuration space of $\A$} is the affine variety $U_\A \subset \CC^{\I_\A}$ given by the \emph{$u$-equations}:
    \begin{equation}\label{eq: ueqs}
        u_X + \prod_{Y \text{ incompatible with } X} u_Y = 1,\qquad  \text{for } X \in \I_{\A}.
    \end{equation}
    We write $U_\A^\circ \subset (\CC^*)^{\I_\A}$ for the very affine variety defined by the same set of equations.
\end{dfn}

We will see in Section~\ref{sec:3} that, even if defined by a square system of equations, the varieties $U_\A$ and $U_\A^\circ$ are irreducible and have dimension $n$. Moreover, their vanishing ideal is precisely generated by the $u$-equations.

\begin{ex}\label{ex: A5 urels}
Consider $\A = \CC A_5 / \langle \alpha_2\alpha_1 \rangle$. The Dyck path and truncated grid for this algebra are shown in Figures \ref{fig:urel24} and \ref{fig:urel2}. Figure \ref{fig:urel24} shows the $Y \in \I_\A$ incompatible with $X = (2,4)$. The two large quadrilaterals to the left and right of $(2,4)$ are shown in red. The associated $u$-equation is
\[
u_{24} + u_2 u_3 u_{12} u_{35} u_{45} u_{\Sigma 3} u_{\Sigma 4} = 1.
\]
Figure \ref{fig:urel2} shows elements of $\I_\A$ incompatible with $X = (2)$, with associated $u$-equation
\[
u_2 + u_{23} u_{24} u_{25} u_{\Sigma 1} u_{\Sigma 2} = 1.
\]
\end{ex}

\begin{figure}
\centering
\begin{tikzpicture}[scale=1,every node/.style={inner sep=2pt}]
  \coordinate (M0) at (-3,-1);
  \coordinate (M1) at (-1,-1);
  \coordinate (M2) at (1,-1);
  \coordinate (M3) at (3,-1);
  \coordinate (M4) at (5,-1);
  \coordinate (M5) at (7,-1);
  \coordinate (M11) at (-2,0);
  \coordinate (M22) at ( 0,0);
  \coordinate (M33) at ( 2,0);
  \coordinate (M44) at ( 4,0);
  \coordinate (M55) at ( 6,0);
  \coordinate (M12) at (-1,1);
  \coordinate (M23) at ( 1,1);
  \coordinate (M34) at ( 3,1);
  \coordinate (M45) at ( 5,1);
  \coordinate (M24) at (2,2);
  \coordinate (M35) at (4,2);
  \coordinate (M25) at (3,3);
{\small
  \node[blue] (U11) at (-1,0) {${12}$};
  \node[diamond, text=red, fill = red!20, minimum size= 2cm] (U22) at ( 1,0) {${23}$};
  \node[blue] (U33) at ( 3,0) {${34}$};
  \node[blue] (U44) at ( 5,0) {${45}$};
  \node[diamond, text=red, fill = red!20, minimum size= 2cm] (U23) at (2,1) {${24}$};
  \node[blue] (U34) at (4,1) {${35}$};
  \node[diamond, text=red, fill = red!20, minimum size= 2cm] (U24) at (3,2) {${25}$};
}
  \draw[thick] (M11) -- (M1) -- (M22) -- (M2) -- (M33) -- (M3) -- (M44) -- (M4) -- (M55);
  \draw[thick] (M23) -- (M33) -- (M34) -- (M44) -- (M45);
  \draw[thick] (M24) -- (M34) -- (M35);
  {\small
  \draw[thick] (M0) --node[blue, midway, above left]{$1$} (M11) --node[black, midway, above left]{$\mathbf{2}$} (M12) --node[red, midway, above right]{$\Sigma 1$} (M22) --node[blue, midway, above left]{$3$} (M23) --node[blue, midway, above left]{$4$} (M24) -- node[blue, midway, above left]{$5$} (M25) --node[red, midway, above right]{$\Sigma 2$} (M35) --node[blue, midway, above right]{$\Sigma 3$} (M45) --node[blue, midway, above right]{$\Sigma 4$} (M55) --node[blue, midway, above right]{$\Sigma 5$} (M5);
  }
  \draw[thick, red] (M12) -- (M22) -- (M2) -- (M33) -- (M34) -- (M35);
\end{tikzpicture}
\caption{For $\A=\CC A_5/\langle \alpha_1\alpha_2\rangle$ and $X = 2 \in \I_\A$, the compatible $Y \in \I_\A$ are shown in blue, and the incompatible are shown in red.}
\label{fig:urel2}
\end{figure}

\begin{rmk}
Given $X,Y \in \I_\A$ we can consider their \emph{compatibility degree} as $c(X,Y) = 0$ if $X$ and $Y$ are compatible (as in Definition~\ref{dfn: comp degree}) and $c(X,Y) = 1$ otherwise. In terms of representation theory, the compatibility degree $c(X,Y)$ is given by the dimensions of $\Hom$-groups in the derived category $K^b(\text{proj} \A)$. As defined in \cite{had}, it is given by $c(X,Y)=\dim(\Hom(X,\Sigma Y))+\dim(\Hom(Y,\Sigma X))$, where $\Sigma$ corresponds to shifting the complex in degree by $-1$.
\end{rmk}

Our configuration spaces generalize two interesting examples of \emph{binary geometries}. Namely, the partial compatifications of $\mathcal{M}_{0,n}$ in dihedral coordinates (see \cite{brown,lam}), and pellspaces (see \cite{pellytope}). To obtain $\widetilde{\mathcal{M}}_{0,n}$ one needs to consider $\A = \CC A_n$ the full path algebra, and for pellspaces $\A = \CC A_n/I$ where $I$ is the ideal generated by all paths of length two. 

\begin{rmk}[Dictionary with chords of polygon] 
We can translate the labeling of our grid to the language of chords in an $n+3$-gon. This might be more familiar in the context of $\mathcal{M}_{0,n}$ and the cluster realization of the associahedron. The indexing set $\I_{\CC A_n}$ is given by the labels $(i), (\Sigma i)$ for $i= 1,\ldots, n$ and $(i,j)$ for $1\leq i < j \leq n$. Each of them corresponds to a chord in an $n+3$-gon with ordered vertices $1,\ldots,n+3$ in the following way
\[
    (i) \leftrightarrow 1,i+1 \qquad (\Sigma i) \leftrightarrow i+1, n+3 \qquad (i,j) \leftrightarrow i+1,j+2.
\]
For example, for $n=3$ the grid labelled by chords in a hexagon is shown in Figure~\ref{fig:chords} (A). Translating the rules for compatibility in Definition~\ref{dfn: comp degree} to chords yields exactly the non-crossing chords conditions. Figure~\ref{fig:chords} shows this for the chord $24$. 
\begin{figure}[h!]
\begin{subfigure}{0.6\textwidth}
\centering
\begin{tikzpicture}[scale=1,every node/.style={inner sep=2pt}]
  \coordinate (M0) at (-3,-1);
  \coordinate (M1) at (-1,-1);
  \coordinate (M2) at (1,-1);
  \coordinate (M3) at (3,-1);
  \coordinate (M11) at (-2,0);
  \coordinate (M22) at ( 0,0);
  \coordinate (M33) at ( 2,0);
  \coordinate (M12) at (-1,1);
  \coordinate (M23) at ( 1,1);
  \coordinate (M13) at (0,2);
{\small
  \node[black] (U11) at (-1,0) {$\mathbf{ 24}$};
  \node[diamond, text=red, fill = red!20, minimum size= 2cm] (U23) at (1,0) {${35}$};
  \node[black] (U12) at (0,1) {${25}$};
\draw (M0) -- node[red, midway,above left]{$13$} (M11) -- (M1) -- (M22) --(M2) -- (M33) -- node[midway,above right]{$46$}  (M3);
\draw (M11) -- node[midway,above left]{$14$} (M12) -- (M22) -- (M23) -- node[red,midway,above right]{$36$} (M33);
\draw (M12) -- node[midway,above left]{$15$}(M13) -- node[midway,above right]{$26$} (M23);
\draw[thick, red] (M0) -- (M11);
\draw[thick, red] (M22) -- (M2) -- (M33) -- (M23) -- (M22);
}

\end{tikzpicture}
\caption{}
\end{subfigure}
\begin{subfigure}{0.3\textwidth}
\centering
\begin{tikzpicture}[scale=1.6, every node/.style={font=\small}]
  \foreach \i in {0,...,5}{
    \coordinate (P\i) at ({0+60*\i}:1);
  }
  \node[above right] at (P1) {$1$};
  \node[above left] at (P2) {$2$};
  \node[left] at (P3) {$3$};
  \node[below left] at (P4) {$4$};
  \node[below right] at (P5) {$5$};
  \node[right] at (P0) {$6$};
  
  \draw (P0) -- (P1) --(P2) --(P3) --(P4) --(P5) --(P0);
  \draw[thick] (P2) -- (P4);
  \draw[thick, red] (P3) -- (P1);
  \draw[thick, red] (P3) -- (P0);
  \draw[thick, red] (P3) -- (P5);
\end{tikzpicture}
\caption{}
\end{subfigure}
\caption{The grid picture for $\CC A_2$ (A) can be labelled by the chords of the regular hexagon (B). With this labelling two chords are compatible if and only if they do not cross.}
\label{fig:chords}
\end{figure}
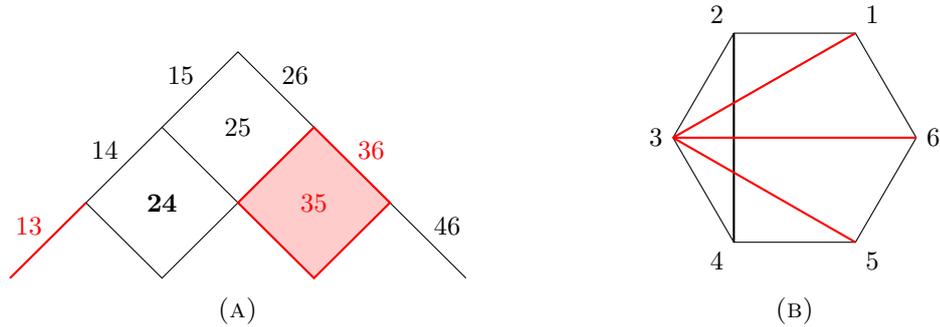

\noindent
For $\CC A_n$, it is known that the configuration spaces we consider, i.e. $\widetilde{\mathcal{M}}_{0,n+3}$, are closely related to the Deligne-Mumford-Knudsen compactification of $\mathcal{M}_{0,n+3}$. As explained in \cite[Section 1]{lam}, considering all possible orderings of the vertices of the $n+3$-gon gives an affine covering of $\overline{\mathcal{M}}_{0,n+3}$. Finding moduli interpretations for the varieties we obtain for quotient algebras is an interesting problem. For the pellspace corresponding to $\A = \CC A_3/\langle \alpha_2\alpha_2 \rangle$ this was initiated in \cite[Section 4.3]{pellytope} by interpreting that the points in $\PP^1$ were colliding at different speeds. We believe this can be generalized by working with moduli spaces of curves in weighted projective space as studied by Hassett \cite{hassett}.
\end{rmk}

\subsection{Monomial maps}
The poset of two-sided ideals of $\CC A_n$ corresponds functorialy to monomial maps between the varieties $U_{\A}$. Let us fix two Dyck paths, $\widetilde{D}$ and $D$, with $\widetilde{D} < D$. Their corresponding ideals, $I = I_{\widetilde D}$ and $J = I_{D}$ are ordered as $I \supset J$. Write $\A = \CC A_n / I$ and $\B = \CC A_n / J$ for the associated algebras. Then, we define a monomial map of coordinate rings
\[
\phi_{I \supset J} : \CC[U_\A] \longrightarrow \CC[U_\B].
\]
For the remaining of this section we fix $\A$ and $\mathcal{B}$ and denote $\phi_{I \supset J}$ as $\phi$ for simplicity. We write the coordinates in the ambient polynomial rings of $ \CC[U_\A]$ and $\CC[U_\B]$ as
\[
\CC[\widetilde{u}_{\widetilde X} \, | \, \widetilde{X} \in \I_\A],\qquad \CC[u_X \,|\, X \in \I_\B].
\]


\begin{dfn}\label{dfn: monomial maps}
    Let $\phi:  \CC[U_\A] \longrightarrow \CC[U_\B]$ be the map given in generators by the following rules. If $(i,j)$ is a diamond below $\widetilde{D}$ then $\phi(\widetilde{u}_{i,j}) = u_{i,j}$. For any up step $\widetilde{(i)}$ of $\widetilde{D}$, we consider the diamonds that are above $\widetilde{(i)}$ but below step $(i)$ of $D$, and map
        \[
        \phi: \widetilde{u}_{i} \mapsto u_i \prod_{\widetilde{i} < (k,l) < i} u_{k,l}.
        \]
    Similarly, for each down step $\widetilde{(\Sigma i)}$ of $\widetilde{D}$, we consider the diamonds above $\widetilde{(\Sigma i)}$ that are below down step $(\Sigma i)$ of $D$:
        \[
        \phi: \widetilde{u}_{\Sigma i} \mapsto u_{\Sigma i} \prod_{\widetilde{\Sigma i} < (k,l) < \Sigma i} u_{k,l}.
        \]
\end{dfn}

\begin{thm}\label{thm:surj}
The ring maps $\phi$ defined above are integral, i.e. $\phi^*$ is a surjective map of affine varieties. Moreover, for any ideals $I \supset J \supset K$ of $\CC A_n$, the maps satisfy
\[
\phi_{K \subset I} = \phi_{K \subset J} \circ \phi_{J \subset I}.
\]
\end{thm}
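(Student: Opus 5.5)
The plan is to reduce everything to \emph{elementary} relations, where $D$ is obtained from $\widetilde D$ by turning a single valley into a peak. In that case exactly one diamond $(a,b)$ lies between $\widetilde D$ and $D$. I would start with the composition law, since it is what makes this reduction legitimate.

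For $\widetilde D<D<D'$, the set of diamonds lying above a step $\widetilde{(i)}$ of $\widetilde D$ and below the step $(i)$ of $D'$ is the disjoint union of two sets: those between $\widetilde{(i)}$ and $(i)$ of $D$, and those between $(i)$ of $D$ and $(i)$ of $D'$. Composing the substitutions of Definition~\ref{dfn: monomial maps} multiplies the corresponding monomials, so it reproduces the product over this union; diamond variables are fixed throughout. The down steps are handled identically. Hence $\phi_{K\subset I}=\phi_{K\subset J}\circ\phi_{J\subset I}$ holds already at the level of polynomial rings. Because any two comparable Dyck paths are joined by a chain of elementary flips, it then suffices to prove well-definedness, integrality and surjectivity for an elementary flip, as these properties are stable under composition.

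So fix an elementary flip adding the diamond $(a,b)$, and let $(i)$ and $(\Sigma j)$ be the two steps of $D$ forming the new peak. Then $\phi$ changes only two generators,
\[
\widetilde u_i\mapsto u_iu_{ab},\qquad \widetilde u_{\Sigma j}\mapsto u_{\Sigma j}u_{ab},
\]
and fixes all others.

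\textbf{Well-definedness.} I would check that each $u$-equation of $\A$ is sent into the ideal of $U_\B$. This is a case analysis with Definition~\ref{dfn: comp degree}, organised by the light-ray quadrilaterals. For $X$ not equal to $i$ or $\Sigma j$, its incompatible set in $\B$ should be that in $\A$, with $(a,b)$ added exactly when one of $i$ or $\Sigma j$ was incompatible with $X$ in $\A$. The monomials then match and the image is literally a $u$-equation of $\B$. For $X=i$ or $X=\Sigma j$, the $A_2$ example shows the pattern: the image of $\widetilde u_2+\widetilde u_{\Sigma2}=1$ is the $u$-equation of $\Sigma 2$ in $\B$. In general I would identify the image of the equation for one of the two peak steps with a $u$-equation of $\B$, possibly multiplied by monomials.

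\textbf{Integrality and surjectivity.} I expect this to be the main obstacle. First I would show that every $Y$ incompatible with $(a,b)$ in $\B$ is different from $i$ and $\Sigma j$. Then $u_{ab}=1-\prod u_Y$ lies in the image of $\phi$, and the remaining generators $x=u_i$ and $y=u_{\Sigma j}$ are controlled by
\[
xu_{ab}=p,\qquad yu_{ab}=q,\qquad x+ys=1,\qquad y+tx=1,
\]
with $p,q,s,t$ in the image (when the two steps are mutually incompatible). Eliminating gives $x(1-st)=1-s$, which is not monic. My first attempt would be to combine these relations with the $u$-equations of the diamonds adjacent to $(a,b)$ to produce a monic relation for $x$ and $y$.

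If that fails, I would prove surjectivity of $\phi^*$ directly by lifting points. Given a point of $U_\A$, the value of $u_{ab}$ is determined. If $u_{ab}\neq0$, set $x=p/u_{ab}$ and $y=q/u_{ab}$ and verify the remaining equations. If $u_{ab}=0$, solve the linear system in $x$ and $y$, using the factorization of the boundary divisor $u_{ab}=0$ (Proposition~\ref{prop:fact}) to guarantee a solution. Integrality would then be deduced from the fact that $\phi^*$ is surjective with finite fibres, together with the irreducibility of both varieties of dimension $n$.
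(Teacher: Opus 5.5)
\paragraph{The integrality and surjectivity step.}
Your reduction to a single valley-to-peak flip agrees with the paper's argument and is fine. So does your proof of the composition law: the strips of diamonds between consecutive paths concatenate. Well-definedness is also fine, with one correction. Write $E_X$ for the $u$-equation of $X$ in $\B$. For a peak step, the image of its $u$-equation is $u_iu_{ab}+s-1=u_iE_{ab}+E_i-sE_{\Sigma j}$, a combination of three $u$-equations rather than a monomial multiple of one.

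The genuine gap is the integrality/surjectivity step, and it cannot be closed, because that part of the statement is false. The non-monic relation you found is the true behaviour. The monomial $st$ is exactly the monomial in the $u$-equation of $(a,b)$, so $1-st=u_{ab}$, and your elimination just returns $xu_{ab}=p$.

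Already for $n=2$ this fails. Take $\B=\CC A_2$ and $\A=\CC A_2/\langle\alpha\rangle$, and the point $\widetilde u_1=2$, $\widetilde u_{\Sigma 1}=-1$, $\widetilde u_2=\widetilde u_{\Sigma 2}=\frac{1}{2}$ of $U_\A$. A preimage would have $u_1=2$ and $u_{\Sigma 2}=\frac{1}{2}$. Hence $u_{12}=1-u_1u_{\Sigma 2}=0$, and so $\widetilde u_2=u_2u_{12}=0$, a contradiction. So $\phi^*$ is not surjective.

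\paragraph{Why your fallbacks fail.}
The lifting fallback breaks exactly here. If $u_{ab}=0$, then $xu_{ab}=p$ and $yu_{ab}=q$ force $p=q=0$, which a point of $U_\A$ need not satisfy. Proposition~\ref{prop:fact} only describes the divisor $\{u_{ab}=0\}$ of $U_\B$, and all its points map into $\{\widetilde u_i=\widetilde u_{\Sigma j}=0\}$, so it cannot supply such lifts.

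The ``finite fibres'' step is also false. Since $\phi(\widetilde u_i)=u_iu_{ab}$ and $\phi(\widetilde u_{\Sigma j})=u_{\Sigma j}u_{ab}$, the map $\phi^*$ sends the $(n-1)$-dimensional divisor $\{u_{ab}=0\}$ into the codimension-two stratum $\{\widetilde u_i=\widetilde u_{\Sigma j}=0\}$. In the example, the whole line $u_{12}=0$, $u_1=u_{\Sigma 2}=1$, $u_2+u_{\Sigma 1}=1$ is sent to a single point. This is the toric blow-down of Proposition~\ref{prop: toric}. A morphism with a positive-dimensional fibre is never finite, so no combination of $u$-equations will give a monic relation for $u_i$ or $u_{\Sigma j}$.

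The paper's own proof is the same reduction followed by the bare assertion that these generators are integral ``using $u$-equations''. It fails for the same reason.

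\paragraph{What survives.}
Your approach does establish the composition law and well-definedness. It also gives injectivity of $\phi$: via Theorem~\ref{thm:param} and telescoping, $\phi$ is just the inclusion $R_\A\subseteq R_\B$ in $\CC(y)$. One can add surjectivity of $\phi^*$ on nonnegative parts, where the facet $u_{ab}=0$ is contracted onto the face $\widetilde u_i=\widetilde u_{\Sigma j}=0$. Integrality, and surjectivity on complex points, must be dropped.
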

\begin{proof}
The composition property of the maps is immediate from the definition. The integrality of the monomial maps can therefore be checked step by step. It suffices to consider $\A$ and $\mathcal{B}$ such that $|\I_\A| = |\I_{\mathcal{B}}|-1$. Say the diamond that differs is $(i,j)$. Then, for all variables except $u_{i,j}, u_{i}$ and $u_{\Sigma j+1}$ it is trivial that they are integral. For the remaining three we use $u$-equations.
\end{proof}

\begin{rmk}
In terms of the algebras themselves, $\A$ is a quotient algebra of $\B$. Given a $\B$-module $M$, write $\pi M$ for the $\A$-module $M \otimes_{\B} \A$. This is a functor of module categories, $\pi: \text{mod}\B \rightarrow \text{mod}\A$, and it can be lifted to the categories of 2-term projective complexes, $\pi: K_\B \rightarrow K_\A$. Then the monomial map we defined above can be written as
\[
\phi(\widetilde{u}_X) = \prod_{Y \in \I_\B} \left(u_Y\right)^{[\pi Y: X]},
\]
where $[\pi Y:X]$ is the multiplicity of $X$ as an indecomposable summand of $\pi Y$. In particular, a diamond $(i,j)$ beneath the Dyck path $D$ corresponds to the object $X = (P_j \rightarrow P_i)$ of $K_\B$, where $P_i = e_i \B$ are the projective $\B$-modules. This object is sent to $\pi X = (\widetilde{P}_j \rightarrow \widetilde{P}_i)$, where $\widetilde{P}_i = e_i \A$ are the projective $\A$-modules. But $\pi X$ is decomposable in $K_\A$ if and only if $\alpha_{j-1} \alpha_{j-2} \cdots \alpha_{i} \in I$. Equivalently, $\pi X$ is decomposable if the diamond $(i,j)$ lies above the Dyck path $\widetilde{D}$. In this case
\[
\pi X \simeq (\widetilde{P}_j \rightarrow 0) \oplus (0 \rightarrow \widetilde{P}_i)
\]
in $K_\A$, and so $u_X$ appears in the monomial formula for both $\widetilde{u}_{\Sigma \widetilde{P}_j}$ and $\widetilde{u}_{\widetilde{P}_i}$. These monomial maps are defined and studied for all finite representation type algebras in \cite{had}.
\end{rmk}

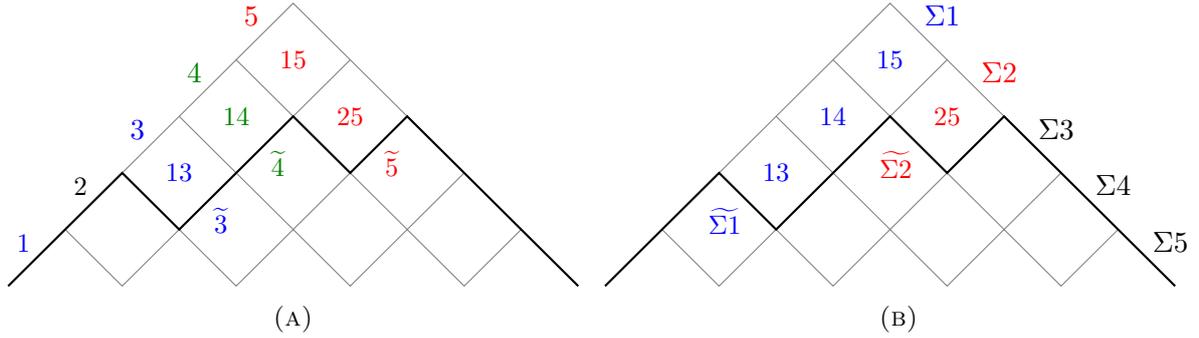
\begin{figure}
\begin{subfigure}[t]{0.49\textwidth}
\centering
\begin{tikzpicture}[scale=0.75,every node/.style={inner sep=2pt}]
  \coordinate (M0) at (-3,-1);
  \coordinate (M1) at (-1,-1);
  \coordinate (M2) at (1,-1);
  \coordinate (M3) at (3,-1);
  \coordinate (M4) at (5,-1);
  \coordinate (M5) at (7,-1);
  \coordinate (M11) at (-2,0);
  \coordinate (M22) at ( 0,0);
  \coordinate (M33) at ( 2,0);
  \coordinate (M44) at ( 4,0);
  \coordinate (M55) at ( 6,0);
  \coordinate (M12) at (-1,1);
  \coordinate (M23) at ( 1,1);
  \coordinate (M34) at ( 3,1);
  \coordinate (M45) at ( 5,1);
  \coordinate (M13) at (0,2);
  \coordinate (M24) at (2,2);
  \coordinate (M35) at (4,2);
  \coordinate (M14) at (1,3);
  \coordinate (M25) at (3,3);
  \coordinate (M15) at (2,4);
{\small
  \node[blue] (U12) at (0,1) {${13}$};
  \node[green!50!black] (U13) at (1,2) {${14}$};
  \node[red] (U24) at (3,2) {${25}$};
  \node[red] (U14) at (2,3) {${15}$};
}
  \draw[gray] (M0) -- (M11) -- (M1) -- (M22) -- (M2) -- (M33) -- (M3) -- (M44) -- (M4) -- (M55) -- (M5);
  \draw[gray] (M11) -- (M12) -- (M22) -- (M23) -- (M33) -- (M34) -- (M44) -- (M45) -- (M55);
  \draw[gray] (M12) --node[blue,midway,above left]{$3$} (M13) -- (M23) -- (M24) -- (M34) -- (M35) -- (M45);
  \draw[gray] (M13) --node[green!50!black,midway,above left]{$4$} (M14) -- (M24) -- (M25) -- (M35);
  \draw[gray] (M14) --node[red,midway,above left]{$5$} (M15) -- (M25);
  {\small
  \draw[thick] (M0)
  --node[blue, midway, above left]{$1$} (M11) 
  --node[black, midway, above left]{${2}$} (M12) 
  -- (M22) 
  --node[blue, midway, below right]{$\widetilde{3}$} (M23) 
  --node[green!50!black, midway, below right]{$\widetilde{4}$} (M24)
  -- (M34) 
  --node[red, midway, below right]{$\widetilde{5}$} (M35) 
  -- (M45) 
  -- (M55) 
  -- (M5);
  }
\end{tikzpicture}
\caption{}
\label{fig:monmap1}
\end{subfigure}
\begin{subfigure}[t]{0.49\textwidth}
\centering
\begin{tikzpicture}[scale=0.75,every node/.style={inner sep=2pt}]
  \coordinate (M0) at (-3,-1);
  \coordinate (M1) at (-1,-1);
  \coordinate (M2) at (1,-1);
  \coordinate (M3) at (3,-1);
  \coordinate (M4) at (5,-1);
  \coordinate (M5) at (7,-1);
  \coordinate (M11) at (-2,0);
  \coordinate (M22) at ( 0,0);
  \coordinate (M33) at ( 2,0);
  \coordinate (M44) at ( 4,0);
  \coordinate (M55) at ( 6,0);
  \coordinate (M12) at (-1,1);
  \coordinate (M23) at ( 1,1);
  \coordinate (M34) at ( 3,1);
  \coordinate (M45) at ( 5,1);
  \coordinate (M13) at (0,2);
  \coordinate (M24) at (2,2);
  \coordinate (M35) at (4,2);
  \coordinate (M14) at (1,3);
  \coordinate (M25) at (3,3);
  \coordinate (M15) at (2,4);
{\small
  \node[blue] (U12) at (0,1) {${13}$};
  \node[blue] (U13) at (1,2) {${14}$};
  \node[red] (U24) at (3,2) {${25}$};
  \node[blue] (U14) at (2,3) {${15}$};
}
  \draw[gray] (M0) -- (M11) -- (M1) -- (M22) -- (M2) -- (M33) -- (M3) -- (M44) -- (M4) -- (M55) --node[black,midway,above right]{$\Sigma 5$} (M5);
  \draw[gray] (M11) -- (M12) -- (M22) -- (M23) -- (M33) -- (M34) -- (M44) -- (M45) --node[black,midway,above right]{$\Sigma 4$} (M55);
  \draw[gray] (M12) -- (M13) -- (M23) -- (M24) -- (M34) -- (M35) --node[black,midway,above right]{$\Sigma 3$} (M45);
  \draw[gray] (M13) -- (M14) -- (M24) -- (M25) -- node[red,midway,above right]{$\Sigma 2$} (M35);
  \draw[gray] (M14) -- (M15) -- node[blue,midway,above right]{$\Sigma 1$}(M25);
  {\small
  \draw[thick] (M0)
  -- (M11) 
  --(M12) 
  --node[blue, midway, below left]{$\widetilde{\Sigma 1}$} (M22) 
  --(M23) 
  --(M24)
  --node[red, midway, below left]{$\widetilde{\Sigma 2}$} (M34) 
  -- (M35) 
  -- (M45) 
  -- (M55) 
  -- (M5);
  }
\end{tikzpicture}
\caption{}
\label{fig:monmap2}
\end{subfigure}
\caption{The monomial map $\phi_{I \supset J}$ for $J = \emptyset$ and $I = \langle \alpha_2\alpha_1, \alpha_4\alpha_3\alpha_2\rangle$ is given by monomials for each of the up steps, (A), and monomials for each of the down steps, (B), of the new Dyck path.} 
\label{fig:monmap}
\end{figure}

\begin{ex}
For $n=5$, consider the ideals $I \supset J$ with $I = \langle \alpha_2\alpha_1, \alpha_4\alpha_3\alpha_2\rangle$ and $J = \emptyset$. The Dyck path $D$ for $I$ is shown in bold in Figure \ref{fig:monmap}. As above, take $\A = \CC A_5 / I$ and $\B = \CC A_5$ to be the associated algebras. Then we have a monomial map $\phi_{I\supset J} : \CC[U_\A] \rightarrow \CC[U_\B]$. On the variables for the up steps of $D$, the map is (Figure \ref{fig:monmap1})
\[
\widetilde{u}_{\widetilde{1}} \mapsto u_1, \qquad \widetilde{u}_{\widetilde{2}} \mapsto u_2, \qquad \widetilde{u}_{\widetilde{3}} \mapsto u_3 u_{13},\qquad \widetilde{u}_{\widetilde 4} \mapsto u_4 u_{14},\qquad \widetilde{u}_{\widetilde{5}} \mapsto u_5 u_{15} u_{25}.
\]
On the variables for the down steps of $D$, the map is (Figure \ref{fig:monmap2})
\[
\widetilde{u}_{\widetilde{\Sigma 1}} \mapsto u_{\Sigma 1} u_{13} u_{14} u_{15},\qquad \widetilde{u}_{\widetilde{\Sigma 2}} \mapsto u_{\Sigma 2} u_{25},\qquad \widetilde{u}_{\widetilde{\Sigma 3}} \mapsto u_{\Sigma 3},\qquad \widetilde{u}_{\widetilde{\Sigma 4}} \mapsto u_{\Sigma 4},\qquad \widetilde{u}_{\widetilde{\Sigma 5}} \mapsto u_{\Sigma 5}.
\]
\end{ex}

\subsection{Positive parts and boundary divisors}
For each algebraic variety $U_{\A}$ we can consider a natural positive (resp. nonnegative) part $(U_{\A})_{>0}$ (resp. $(U_{\A})_{\geq0}$) by  intersecting it with the positive orthant $(\RR_+)^{\I_{\A}}$ (resp. $(\RR_{\geq 0})^{\I_{\A}}$). In this subsection we describe the prime divisors of our variety which bound $(U_{\A})_{\geq0}$ and describe how they behave under the monomial maps.  

\begin{prop}\label{prop:fact}
Let $\A = \CC A_n/I_D$ and $X \in \I_\A$. Denote $U_\A(X)$ for the subvariety of $U_\A$ given by $u_X =0$. Then, $U_\A(X)$ is isomorphic to $U_{\A'}\times U_{\A''}$ where $\A'$ and $\A''$ are linear Nakayama algebras of lower rank (i.e. $\CC A_m/J$ for $m<n$). Concretely, we have the following cases:
\smallskip
\begin{itemize}
    \item if $X = (i,j)$ then $\A' = \CC A_{j-i-1}$ and $\A'' = \CC A_{n+i-j}/I_{D_{ij}}$
    \smallskip
    \item if $X = (i)$ or $X = (\Sigma i)$ then $\A' = \CC A_{n-i}/I_{D_R}$ and $\A'' = \CC A_{i-1}/I_{D_{L}}$
\end{itemize}
\smallskip
where $D_{ij},D_R$ and $D_L$ are Dyck paths in the corresponding grids which we obtain from $D$.
\end{prop}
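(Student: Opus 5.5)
We prove this by direct substitution into the $u$-equations \eqref{eq: ueqs}, exploiting that the system is binary.

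\textbf{The equations on $u_X=0$.} Setting $u_X=0$ in the $u$-equation for $X$ gives $\prod_{Y \text{ incomp. } X} u_Y = 1$. Hence every $u_Y$ with $Y$ incompatible with $X$ is nonzero. For such a $Y$, the $u$-equation of $Y$ contains $u_X$ in its product (compatibility is symmetric), so it reduces to $u_Y = 1$. Conversely, if $u_Y=1$ for all $Y$ incompatible with $X$, then the equation for $X$ holds. On $U_\A(X)$ the incompatible coordinates are therefore frozen at $1$, and $U_\A(X)$ is cut out, in the coordinates $u_Z$ with $Z$ compatible with $X$ (and $Z\neq X$), by the $u$-equations of those $Z$. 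In each of these equations we substitute $u_Y=1$ for the incompatible $Y$, which simply deletes those factors.

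\textbf{Splitting the compatible set.} We show that the set $C(X)$ of elements compatible with $X$ splits as $C' \sqcup C''$, and we exhibit bijections $C' \cong \I_{\A'}$ and $C''\cong \I_{\A''}$ carrying compatibility to compatibility. We also need that every $Z'\in C'$ is compatible with every $Z''\in C''$. The reduced equation of $Z'\in C'$ then involves only variables in $C'$ and is exactly the $u$-equation of $\A'$; likewise for $C''$. This yields $U_\A(X)\cong U_{\A'}\times U_{\A''}$.

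\textbf{Case $X=(i,j)$.} The set $C'$ consists of the diamonds $(k,l)$ with $[k,l]\subsetneq[i,j]$, together with the steps that lie inside the triangle under $(i,j)$. The natural guess is that $C'$ is the full grid of $\CC A_{j-i-1}$. This requires checking that the triangle lies below $D$, which holds because $(i,j)$ lies below $D$. The bounding edges of the triangle play the role of up and down steps: the two sides of the diamond, i.e. the left/right boundary diamonds $(i,l)$ and $(k,j)$, get relabelled as steps. The set $C''$ consists of everything outside the two light-ray quadrilaterals and outside the triangle. Collapsing the strip of the grid through $(i,j)$, by gluing the two sides along the removed light rays, produces a grid of size $n-(j-i)$ with a Dyck path $D_{ij}$ obtained from $D$ by deleting the corresponding steps and shifting indices.

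\textbf{Case $(i)$ or $(\Sigma i)$.} Here the light-ray quadrilateral splits the truncated grid into a left part, giving $D_L$ of length $i-1$, and a right part, giving $D_R$ of length $n-i$. These are read off from $D$ to the left and right of the ray.

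\textbf{Main obstacle.} The hard part is verifying, case by case from Definition~\ref{dfn: comp degree}, that each relabelling preserves compatibility. This is delicate when an element of $\A$ of one type (a diamond) becomes a step in $\A'$ or $\A''$. The up/down-step rule in particular involves the order along $D$, so it must be checked after collapsing. I would first verify the diamond case in the chord model for $\CC A_n$, where it is the classical factorization of $\widetilde{\mathcal M}_{0,n+3}$ boundary divisors, and then check that truncation by $D$ commutes with the collapse. It also remains to check that the cross-compatibility between $C'$ and $C''$ holds, which follows from nestedness/disjointness of intervals.
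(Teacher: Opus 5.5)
This is essentially the paper's argument: setting $u_X=0$ forces $u_Y=1$ for every $Y$ incompatible with $X$, and the remaining coordinates split into the two truncated grids left after removing the light-ray regions. Your explicit requirements (cross-compatibility of $C'$ and $C''$, and compatibility-preserving relabellings in which boundary diamonds become steps) spell out what the paper conveys through Figures~\ref{fig:ijbound} and~\ref{fig:ibound}, and like the paper you leave the case-by-case check to the pictures.

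One slip: for $X=(i,j)$ the set $C'$ contains no steps of $D$. Steps of $D$ can only touch the triangle under $M_{i,j}$ along its upper edges, and those steps are incompatible with $(i,j)$, except up step $j$ and $\Sigma i$, which lie in $C''$. So all steps of $\CC A_{j-i-1}$ come from the diamonds $(i,l)$ and $(k,j)$, as you say afterwards.
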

\begin{proof}
    The proposition follows from Definition~\ref{dfn: comp degree} understood in our (truncated) grid. Note that the $u$-equations have the property that once we set $u_X=0$ then for all $Y \in \I_\A$ incompatible with $X$ we have $u_Y = 1$. Then, it remains to understand what the relations between the remaining $Z \in \I_\A$ are. By cutting out the diamonds of the grid which are incompatible with $X$ we are left with two truncated grids. In Figures~\ref{fig:ijbound} and \ref{fig:ibound} the two cases are shown.

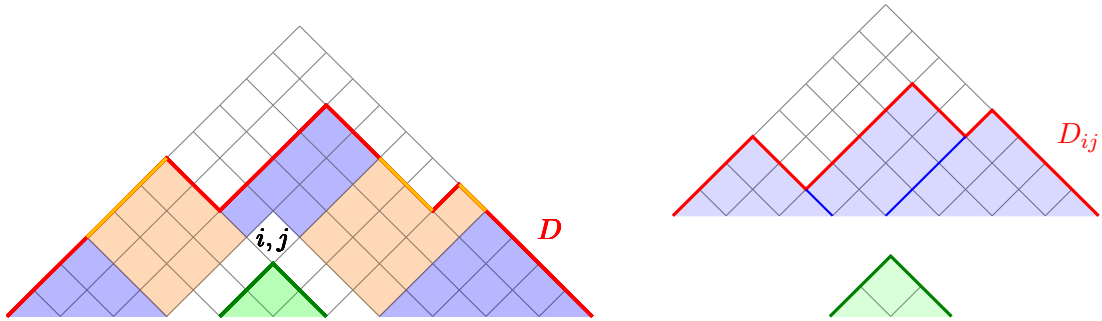
\begin{figure}[h!]
\begin{subfigure}{0.48\textwidth}
\begin{tikzpicture}[scale=0.35]
\def\N{11}      
\def\x0{-3}    
\def\y0{-1}    
\def\xstep{2}  
\def\ystep{1}  
\foreach \r in {0,...,\N}{
  \pgfmathtruncatemacro{\len}{\N-\r}
  \foreach \k in {0,...,\len}{
    \coordinate (M-\r-\k) at ({\x0 + \xstep*\k + \r}, {\y0 + \ystep*\r});
  }
}
\foreach \r in {0,...,\numexpr\N-1\relax}{
  \pgfmathtruncatemacro{\len}{\N-\r}
  \ifnum\len>0
    \foreach \k in {0,...,\numexpr\len-1\relax}{
      \draw[gray] (M-\r-\k) -- (M-\the\numexpr\r+1\relax-\k);
      \draw[gray] (M-\r-\the\numexpr\k+1\relax) -- (M-\the\numexpr\r+1\relax-\k);
    }
  \fi
\node at (M-4-3) [below=3pt] {\small $i,j$};
\filldraw[orange,opacity=0.03] (M-3-4) -- (M-0-7) -- (M-4-7) -- (M-5-6) -- (M-4-6) -- (M-6-4) -- (M-3-4);
\filldraw[orange,opacity=0.03] (M-3-3) -- (M-0-3) -- (M-3-0) -- (M-6-0) -- (M-3-3);
\filldraw[blue,opacity=0.03] (M-0-0) -- (M-3-0) -- (M-0-3) -- (M-0-0);
\filldraw[blue,opacity=0.03] (M-3-3) -- (M-4-2) -- (M-8-2) -- (M-6-4) -- (M-3-4) -- (M-4-3) -- (M-3-3);
\filldraw[blue,opacity=0.03] (M-0-7) -- (M-4-7) -- (M-0-11) -- (M-0-7);
\filldraw[green, opacity=0.03] (M-0-4) -- (M-2-4) -- (M-0-6) -- (M-0-4);
\draw[very thick,green!50!black] (M-0-4) -- (M-2-4) -- (M-0-6);
\draw[very thick,red] (M-0-0) -- (M-6-0) -- (M-4-2) -- (M-8-2) -- (M-4-6) -- (M-5-6) --node[midway,above right]{$D$} (M-0-11);
\draw[very thick,orange!50!yellow] (M-3-0) -- (M-6-0);
\draw[very thick,orange!50!yellow] (M-6-4) -- (M-4-6);
\draw[very thick,orange!50!yellow] (M-5-6) -- (M-4-7);
}
\end{tikzpicture}
\end{subfigure}
\begin{subfigure}{0.48\textwidth}
\centering
\begin{tikzpicture}[scale=0.35]
\def\N{8}      
\def\x0{-3}    
\def\y0{-1}    
\def\xstep{2}  
\def\ystep{1}  
\foreach \r in {0,...,\N}{
  \pgfmathtruncatemacro{\len}{\N-\r}
  \foreach \k in {0,...,\len}{
    \coordinate (M-\r-\k) at ({\x0 + \xstep*\k + \r}, {\y0 + \ystep*\r});
  }
}
\foreach \r in {0,...,\numexpr\N-1\relax}{
  \pgfmathtruncatemacro{\len}{\N-\r}
  \ifnum\len>0
    \foreach \k in {0,...,\numexpr\len-1\relax}{
      \draw[gray] (M-\r-\k) -- (M-\the\numexpr\r+1\relax-\k);
      \draw[gray] (M-\r-\the\numexpr\k+1\relax) -- (M-\the\numexpr\r+1\relax-\k);
    }
  \fi
}
\filldraw[blue,opacity=0.15] (M-0-0) -- (M-3-0) -- (M-1-2) -- (M-5-2) -- (M-3-4) -- (M-4-4) -- (M-0-8) -- (M-0-0);
\draw[very thick,red] (M-0-0) -- (M-3-0) -- (M-1-2) -- (M-5-2) -- (M-3-4) -- (M-4-4) --node[midway,above right]{$D_{ij}$} (M-0-8);
\draw[thick,blue] (M-1-2) -- (M-0-3);
\draw[thick,blue] (M-0-4) -- (M-3-4);
\end{tikzpicture}
\\~\\
\begin{tikzpicture}[scale=0.4]
\def\N{2}      
\def\x0{-3}    
\def\y0{-1}    
\def\xstep{2}  
\def\ystep{1}  
\foreach \r in {0,...,\N}{
  \pgfmathtruncatemacro{\len}{\N-\r}
  \foreach \k in {0,...,\len}{
    \coordinate (M-\r-\k) at ({\x0 + \xstep*\k + \r}, {\y0 + \ystep*\r});
  }
}
\foreach \r in {0,...,\numexpr\N-1\relax}{
  \pgfmathtruncatemacro{\len}{\N-\r}
  \ifnum\len>0
    \foreach \k in {0,...,\numexpr\len-1\relax}{
      \draw[gray] (M-\r-\k) -- (M-\the\numexpr\r+1\relax-\k);
      \draw[gray] (M-\r-\the\numexpr\k+1\relax) -- (M-\the\numexpr\r+1\relax-\k);
    }
  \fi
}
\filldraw[green,opacity=0.15] (M-0-0) -- (M-2-0) -- (M-0-2) -- (M-0-0);
\draw[very thick,green!50!black] (M-0-0) -- (M-2-0) -- (M-0-2);
\end{tikzpicture}
\end{subfigure}
\caption{For a Dyck path $D$ and diamond $(i,j)$, the incompatible diamonds and edges are shown in orange (left). On the divisor $u_{i,j} = 0$, the variety $U_\A$ associated to $D$ is isomorphic to a product of two such varieties: one for the Dyck path $D_{i,j}$ (right top) obtained by contracting the orange region, and one for the triangle below $(i,j)$ (right bottom). The white diamonds (left) label the steps of the green Dyck path (right bottom).}
\label{fig:ijbound}
\end{figure}

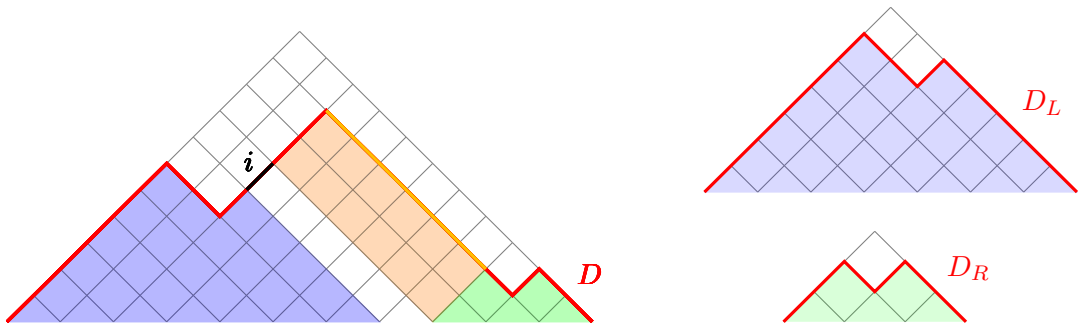
\begin{figure}[h!]
\begin{subfigure}{0.48\textwidth}
\begin{tikzpicture}[scale=0.35]
\def\N{11}      
\def\x0{-3}    
\def\y0{-1}    
\def\xstep{2}  
\def\ystep{1}  
\foreach \r in {0,...,\N}{
  \pgfmathtruncatemacro{\len}{\N-\r}
  \foreach \k in {0,...,\len}{
    \coordinate (M-\r-\k) at ({\x0 + \xstep*\k + \r}, {\y0 + \ystep*\r});
  }
}
\foreach \r in {0,...,\numexpr\N-1\relax}{
  \pgfmathtruncatemacro{\len}{\N-\r}
  \ifnum\len>0
    \foreach \k in {0,...,\numexpr\len-1\relax}{
      \draw[gray] (M-\r-\k) -- (M-\the\numexpr\r+1\relax-\k);
      \draw[gray] (M-\r-\the\numexpr\k+1\relax) -- (M-\the\numexpr\r+1\relax-\k);
    }
  \fi
\filldraw[orange,opacity=0.03] (M-6-2) -- (M-0-8) -- (M-2-8) -- (M-8-2) -- (M-6-2);
\filldraw[blue,opacity=0.03] (M-0-0) -- (M-6-0) -- (M-4-2) -- (M-5-2) -- (M-0-7) -- (M-0-0);
\filldraw[green,opacity=0.03] (M-0-8) -- (M-2-8) -- (M-1-9) -- (M-2-9) -- (M-0-11) -- (M-0-8);
\draw[very thick,red] (M-0-0) -- (M-6-0) -- (M-4-2) -- (M-5-2) --node[midway,above left=-2pt]{\color{black} $i$} (M-6-2) -- (M-8-2) -- (M-1-9) -- (M-2-9) --node[midway,above right]{$D$} (M-0-11);
\draw[very thick,black] (M-5-2) -- (M-6-2);
\draw[very thick,orange!50!yellow] (M-8-2) -- (M-2-8);
}
\end{tikzpicture}
\end{subfigure}
\begin{subfigure}{0.48\textwidth}
\centering
\begin{tikzpicture}[scale=0.35]
\def\N{7}      
\def\x0{-3}    
\def\y0{-1}    
\def\xstep{2}  
\def\ystep{1}  
\foreach \r in {0,...,\N}{
  \pgfmathtruncatemacro{\len}{\N-\r}
  \foreach \k in {0,...,\len}{
    \coordinate (M-\r-\k) at ({\x0 + \xstep*\k + \r}, {\y0 + \ystep*\r});
  }
}
\foreach \r in {0,...,\numexpr\N-1\relax}{
  \pgfmathtruncatemacro{\len}{\N-\r}
  \ifnum\len>0
    \foreach \k in {0,...,\numexpr\len-1\relax}{
      \draw[gray] (M-\r-\k) -- (M-\the\numexpr\r+1\relax-\k);
      \draw[gray] (M-\r-\the\numexpr\k+1\relax) -- (M-\the\numexpr\r+1\relax-\k);
    }
  \fi
}
\filldraw[blue,opacity=0.15] (M-0-0) -- (M-6-0) -- (M-4-2) -- (M-5-2) -- (M-0-7) -- (M-0-0);
\draw[very thick,red] (M-0-0) -- (M-6-0) -- (M-4-2) -- (M-5-2) --node[midway, above right]{$D_L$} (M-0-7);
\end{tikzpicture}
\\~\\
\begin{tikzpicture}[scale=0.4]
\def\N{3}      
\def\x0{-3}    
\def\y0{-1}    
\def\xstep{2}  
\def\ystep{1}  
\foreach \r in {0,...,\N}{
  \pgfmathtruncatemacro{\len}{\N-\r}
  \foreach \k in {0,...,\len}{
    \coordinate (M-\r-\k) at ({\x0 + \xstep*\k + \r}, {\y0 + \ystep*\r});
  }
}
\foreach \r in {0,...,\numexpr\N-1\relax}{
  \pgfmathtruncatemacro{\len}{\N-\r}
  \ifnum\len>0
    \foreach \k in {0,...,\numexpr\len-1\relax}{
      \draw[gray] (M-\r-\k) -- (M-\the\numexpr\r+1\relax-\k);
      \draw[gray] (M-\r-\the\numexpr\k+1\relax) -- (M-\the\numexpr\r+1\relax-\k);
    }
  \fi
}
\filldraw[green,opacity=0.15] (M-0-0) -- (M-2-0) -- (M-1-1) -- (M-2-1) -- (M-0-3);
\draw[very thick,red] (M-0-0) -- (M-2-0) -- (M-1-1) -- (M-2-1) --node[midway, above right]{$D_R$} (M-0-3);
\end{tikzpicture}
\end{subfigure}
\caption{For a Dyck path $D$ and up step $(i)$, the incompatible diamonds and edges are shown in orange (left). On the divisor $u_{i} = 0$, the variety $U_\A$ associated to $D$ is isomorphic to a product of two such varieties, for the Dyck paths $D_{L}$ and $D_R$ (right) obtained by contracting the orange region.}
\label{fig:ibound}
\end{figure}

\end{proof}

\begin{ex}
    Continuing Example~\ref{ex: A5 urels} we consider $\A = \CC A_5 / \langle \alpha_2\alpha_1 \rangle$. The boundary divisors of $U_{\A}$ associated to setting $u_{24}=0$ and $u_2=0$ are given by 
    \[
        U_{\A}(u_{24}) \cong U_{\CC A_1} \times U_{\CC A_3/D_{24}} \qquad U_{\A}(u_{2}) \cong U_{\CC A_1} \times U_{\CC A_2}
    \]
    where $D_{24}$ is the \textcolor{red}{red} Dyck path  \begin{tikzpicture}[baseline=-0.6ex,scale=0.3]
  \coordinate (M11) at (-2,0);
  \coordinate (M22) at ( 0,0);
  \coordinate (M33) at ( 2,0);
  \coordinate (M12) at (-1,1);
  \coordinate (M23) at ( 1,1);
  \coordinate (M13) at (0,2);
  \coordinate (P1) at (-3,-1);
  \coordinate (P2) at (-1,-1);
  \coordinate (P3) at (1,-1) ;
  \coordinate (P4) at (3,-1) ;
    \draw[gray] (M12) -- (M22) -- (P2) -- (M11) --  (M12);
  \draw[gray] (M22) -- (M23) -- (M33) -- (P3) --  (M22);
  \draw[gray] (M12) -- (M13) -- (M23) -- (M22) -- (M12);
    \draw[thick, red] (P1) -- (M11) -- (P2) -- (M22) -- (M23) -- (M33) -- (P4);
  \end{tikzpicture} in the $A_3$ grid. These divisors are obtained by removing the red boxes in the truncated grids drawn in Figure~\ref{fig:urel24} and Figure~\ref{fig:urel2} and piecing together the remaining diamonds. 
\end{ex}

\begin{rmk}
    Note that the monomial maps from Definition~\ref{dfn: monomial maps} behave well when restricted to the boundary divisors. Consider the geometric map $\phi^*:  U_\B \longrightarrow U_\A$ and denote the ambient coordinates of $U_\A$ and  $U_\B$ as $\widetilde{u}_{\widetilde{X}}$ and $u_X$ respectively. Then we have
    \[
        U_\A(\widetilde{u}_{\widetilde{X}}) = \begin{dcases}
            \phi^*(U_\B(u_X)) &\text{if } \widetilde{X} = (i,j) \text{ is below } \widetilde{D},\\
            \bigcup_{\widetilde{i} < (k,l) < i} \phi^*(U_\B(u_{k,l})) &\text{if } \widetilde{X}=\widetilde{(i)},\\
            \bigcup_{\widetilde{\Sigma i} < (k,l) < \Sigma i} \phi^*(U_\B(u_{k,l})) &\text{if } \widetilde{X}=\widetilde{(\Sigma i)}.
        \end{dcases}
    \]
\end{rmk}


\section{F-polynomials and toric coordinates}\label{sec:F}

In this section we give an explicit parametrisation of the varieties $U_\A^\circ$ and $U_\A$ in terms of the $F$-polynomials of the indecomposable modules of $\A$, see \cite{derksen2010quivers} for the general definition. This allows us to see $U_\A$ as an affine open subset of a projective toric variety. 

\smallskip

We consider variables $y_1,\ldots,y_n$ and denote the polynomial ring on these variables by $\CC[y]$ and the corresponding field of rational functions as $\CC(y)$. To simplify notation we write $y_{i,\ldots,j}$ instead of the monomial $y_iy_{i+1}\cdots y_j$ for $1\leq i < j \leq n$. Given a $\CC A_n$-module $M_{i,j}$, its associated $F$-polynomial is $F_{i,j} := 1+y_i + y_{i}y_{i+1} + \cdots + y_{i,\ldots,j} \in \CC[y].$

\smallskip

Fix a quotient algebra $\A$ with corresponding Dyck path $D$. Recall that the ambient space of $U_\A$ has coordinates indexed by the indexing set $\I_\A$. Let the up steps of $D$ have coordinates $(j_i,i-1) \to (j_i, i)$; and the down steps $(i,k_i) \to (i+1, k_i)$. For each $X \in \I_\A$ we consider a rational function $f_X \in \CC(y)$ given as follows
\begin{align}\label{eq: param}
    &f_{i,j} := \frac{F_{i,j}F_{i+1,j-1}}{F_{i+1,j} F_{i,j-1}}, && \text{for $(i,j)$ diamond below $D$} \\
    &f_{i} := \frac{F_{j_i,i-1}}{F_{j_i,i}}, \quad \text{and} \quad f_{\Sigma i} := y_i\frac{F_{i+1,k_i}}{F_{i,k_i}} && \text{for } 1\leq i \leq n.\nonumber
\end{align}
By convention if $k=0, l=n+1$ or $k>l$ we set $F_{k,l} = 1$. We denote $R_\A$ (resp. $R_\A^\circ$) for the subring of $\CC(y)$ generated by $f_X$ (resp. $f_X^{\pm 1}$) for $X \in \I_\A$. These rational functions can be obtained from our grid by labelling the lattice points with the $F$-polynomials and considering for each diamond, labelled by $X=(i,j)$, the product $f_X = \frac{\text{up}\cdot\text{down}}{\text{left}\cdot\text{right}}$. The up steps and down steps follow a similar rule but we add a $y_i$ factor for $(\Sigma i)$.

\begin{ex}
For $n=4$, consider the grids labelled by $F$-polynomials for $\CC A_4$ and for the quotient algebra $\A = \CC A_4/I$ with $I = \langle \alpha_2\alpha_2 \rangle$:
    \begin{figure}[H]
    \begin{subfigure}{0.48\textwidth}
        \centering
\begin{tikzpicture}[scale=0.8,every node/.style={inner sep=2pt}]
  \coordinate (M0) at (-3,-1);
  \coordinate (M1) at (-1,-1);
  \coordinate (M2) at (1,-1);
  \coordinate (M3) at (3,-1);
  \coordinate (M4) at (5,-1);
  \node (M11) at (-2,0) {$F_{11}$};
  \node (M22) at ( 0,0) {$F_{22}$};
  \node (M33) at ( 2,0) {$F_{33}$};
  \node (M44) at ( 4,0) {$F_{44}$};
  \node (M12) at (-1,1) {$F_{12}$};
  \node (M23) at ( 1,1) {$F_{23}$};
  \node (M34) at ( 3,1) {$F_{34}$};
  \node (M13) at (0,2) {$F_{13}$};
  \node (M24) at (2,2) {$F_{24}$};
  \node (M14) at (1,3) {$F_{14}$};
  \draw[thick] (M0) -- (M11) -- (M1) -- (M22) -- (M2) -- (M33) -- (M3) -- (M44) -- (M4);
  \draw[thick] (M11) -- (M12) -- (M22) -- (M23) -- (M33) -- (M34) -- (M44);
  \draw[thick] (M12) -- (M13) -- (M23) -- (M24) -- (M34);
  \draw[thick] (M13) -- (M14) -- (M24);
\end{tikzpicture}
\caption{Grid for $\CC A_4$}
    \end{subfigure}
    \begin{subfigure}{0.48\textwidth}
        \centering
        \begin{tikzpicture}[scale=0.8,every node/.style={inner sep=2pt}]
  \coordinate (M0) at (-3,-1);
  \coordinate (M1) at (-1,-1);
  \coordinate (M2) at (1,-1);
  \coordinate (M3) at (3,-1);
  \coordinate (M4) at (5,-1);
  \node (M11) at (-2,0) {$F_{11}$};
  \node (M22) at ( 0,0) {$F_{22}$};
  \node (M33) at ( 2,0) {$F_{33}$};
  \node (M44) at ( 4,0) {$F_{44}$};
  \node (M12) at (-1,1) {$F_{12}$};
  \node (M23) at ( 1,1) {$F_{23}$};
  \node (M34) at ( 3,1) {$F_{34}$};
  \node (M24) at (2,2) {$F_{24}$};
  \draw[thick] (M0) -- (M11) -- (M1) -- (M22) -- (M2) -- (M33) -- (M3) -- (M44) -- (M4);
  \draw[thick] (M11) -- (M12) -- (M22) -- (M23) -- (M33) -- (M34) -- (M44);
  \draw[very thick, red] (M0) -- (M11) -- (M12) -- (M22) -- (M23) -- (M24) -- (M34) -- (M44) -- (M4);
\end{tikzpicture}
        \caption{Grid for $\A = \CC A_4 / \langle \alpha_2\alpha_1 \rangle$}
    \end{subfigure}
    \end{figure}
    
\noindent
    $\A$ has $8$ indecomposable modules which label the diamonds and the up-steps of $D$ in our grid. We also consider the four extra labels of the down steps of $D$. Then, in this example the $12$ rational functions $f_X$ from \eqref{eq: param} are 
    \begin{align*}
        f_{1,2} &= \frac{F_{1,2}}{F_{2,2}F_{1,1}} & f_{2,3} &= \frac{F_{2,3}}{F_{3,3}F_{2,2}} & f_{\Sigma 1} & = y_1\frac{F_{2,2}}{F_{1,2}} & f_{1}&=y\frac{1}{F_{1,1}}\\
        f_{2,4} &=  \frac{F_{2,4}F_{3,3}}{F_{2,3}F_{3,4}}  & f_{3,4} &= \frac{F_{3,4}}{F_{3,3}} & f_{\Sigma 2} & = y_2\frac{F_{3,4}}{F_{2,4}} & f_{2}&=\frac{F_{1,1}}{F_{1,2}}\\
        f_{\Sigma 3} & = y_3\frac{F_{4,4}}{F_{3,4}} & f_{3}&=\frac{F_{2,2}}{F_{2,3}} & f_{\Sigma 4} & = y_4\frac{1}{F_{4,4}} & f_{4}&=\frac{F_{2,3}}{F_{2,4}}.
   \end{align*}
\end{ex}

\begin{thm}\label{thm:param}
    The maps $\varphi^\circ: \CC[U_\A^\circ] \to R_\A^\circ$ and $\varphi: \CC[U_\A] \to R_\A$ given by $u_X \to f_X$ are ring isomorphisms. In other words, they yield parameterizations of the very affine variety $U_\A^\circ$ and the affine variety $U_\A$. 
\end{thm}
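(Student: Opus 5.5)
We will prove the theorem in three steps: check that the $f_X$ satisfy the $u$-equations, show that $R_\A$ has dimension $n$, and show that the ideal of $U_\A$ is prime of dimension $n$. The first two steps give well-defined surjections $\varphi$ and $\varphi^\circ$; the third gives injectivity.

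\textbf{Step 1: the $f_X$ satisfy the $u$-equations.} We must show
\[
f_X+\prod_{Y\text{ incompatible with }X} f_Y=1 \qquad \text{for each } X\in\I_\A .
\]
The basic tool is the three-term identity for $F$-polynomials of a diamond,
\[
F_{i,j}F_{i+1,j-1}=F_{i+1,j}F_{i,j-1}-y_{i,\ldots,j}\cdot 1 ,
\]
which one checks directly from $F_{i,j}=1+y_iF_{i+1,j}$. Hence
\[
1-f_{i,j}=\frac{y_{i,\ldots,j}}{F_{i+1,j}F_{i,j-1}} .
\]
Next we evaluate the product over the incompatible set, which by Definition~\ref{dfn: comp degree} consists of the two maximal quadrilaterals (light rays), truncated by $D$. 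Multiplying $f_Y$ over a quadrilateral of diamonds telescopes, since interior lattice points cancel between up/down and left/right. Only corner and boundary $F$'s survive. The boundary steps of $D$ that lie inside the ray contribute the factors $f_k$ or $f_{\Sigma k}$, and these cancel the remaining boundary $F$'s. The $y_i$ in $f_{\Sigma i}$ and the monomials arising when rays hit the baseline supply $y_{i,\ldots,j}$.

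We split into cases: $X$ a diamond, an up step, or a down step. In each case we further split according to whether the ray reaches the baseline or is cut off by $D$. When it is cut off, we use the convention $F_{k,l}=1$ and the definitions of $j_i,k_i$. The paper's $A_2$ example gives sanity checks: $f_{\Sigma 1}=y_1/F_{1,1}$ and $f_1=1/F_{1,1}$ sum to $1$. Once all cases are done, $\varphi$ and $\varphi^\circ$ are well-defined surjections onto $R_\A$ and $R^\circ_\A$.

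\textbf{Step 2: $\dim R_\A=n$.} The functions $f_{\Sigma 1},\dots,f_{\Sigma n}$ (or the $f_X$ together) generate $\CC(y)$ birationally: one recovers $y_n$ from $f_{\Sigma n}$, then $y_{n-1}$, and so on, by downward induction on the ratios. Therefore $\mathrm{Frac}(R_\A)=\CC(y)$, and $R_\A$ is a domain of dimension $n$.

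\textbf{Step 3: injectivity.} It suffices to show that the ideal generated by the $u$-equations is prime of dimension $n$, since a surjection between domains of equal dimension is then an isomorphism. We would argue by induction on $n$ using Proposition~\ref{prop:fact}.

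For $U_\A^\circ$, we note that on the torus each $u$-equation lets us solve for $u_X$. We then show that all $u_X$ are expressible as rational functions in $n$ chosen coordinates, for example the variables of a maximal compatible set, i.e.\ a vertex of the stratification. These expressions give an inverse rational map $y\mapsto u$, so the $f_X$ and these $n$ coordinates define mutually inverse maps on $U^\circ_\A$.

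For $U_\A$, we show irreducibility and equidimensionality. Every component must meet the torus: otherwise it lies in some divisor $u_X=0$, and by Proposition~\ref{prop:fact} and induction that divisor is irreducible of dimension $n-1$ with $u_X$ not a zero divisor. Reducedness would follow from a Gröbner or Cohen–Macaulay argument, or more simply from a localization argument: inverting a monomial in the coordinates of a compatible set gives a polynomial ring.

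\textbf{Main obstacle.} The real difficulty is Step 1: the telescoping bookkeeping when the light rays are truncated by a valley of $D$. We would try first to reduce it to the full path algebra $\CC A_n$, where these are the classical cross-ratio $u$-equations. The idea is to push forward along the monomial maps of Definition~\ref{dfn: monomial maps}, by checking that $\phi(\widetilde u_X)$ evaluated at the $\CC A_n$-parametrisation equals the $f_X$ of $\A$. This is again a telescoping product.
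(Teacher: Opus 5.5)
\textbf{The gap is in Step 3, and Step 3 (not Step 1) carries the content of the theorem.}

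Steps 1 and 2 are sound, and so is your reduction that a surjection between affine domains of equal dimension is injective. Step 1 is essentially the paper's telescoping argument, with one slip: the diamond identity is $F_{i,j-1}F_{i+1,j}-F_{i,j}F_{i+1,j-1}=y_{i+1,\ldots,j}$, not $y_{i,\ldots,j}$. For example, $F_{1,1}F_{2,2}-F_{1,2}=y_2$, which is what $u_{12}+u_1u_{\Sigma 2}=1$ in the $A_2$ example requires.

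\emph{The non-zero-divisor claim.} You assert that by Proposition~\ref{prop:fact} and induction the divisor $U_\A(X)$ is irreducible of dimension $n-1$ ``with $u_X$ not a zero divisor'', and conclude that every component meets the torus. The second clause does not follow from the first, and the induction hypothesis (about $\A',\A''$) says nothing about zero divisors in $\CC[U_\A]$. Since $U_\A$ is cut out by $|\I_\A|$ equations in $|\I_\A|$ variables, Krull's theorem gives no lower bound on component dimensions beyond $0$. So nothing you say rules out $U_\A(X)$ itself being an extra or embedded component. A toy model: take $A=\CC[s,t]/(s(st-1),\,t(st-1))$ with $u=s$. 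Then $A/(s)\cong\CC$ is a domain of dimension $0$ and $A[s^{-1}]\cong\CC[s^{\pm1}]$ is a domain of dimension $1$. Yet $s\cdot(st-1)=0$ in $A$ while $st-1\neq 0$ (it equals $-1$ at the origin). Set-theoretically you would at least need to show that the closure of $U_\A^\circ$ meets $\{u_X=0\}$; even then, embedded components remain.

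\emph{Reducedness and the torus case.} Reducedness is only gestured at, and your localization claim is false as stated. For $\CC A_1$, inverting $u_{\Sigma 1}$ in $\CC[u_1,u_{\Sigma 1}]/(u_1+u_{\Sigma 1}-1)$ gives $\CC[u_1,(1-u_1)^{-1}]$, not a polynomial ring. The correct chart statement would be: a localization of $\CC[u_X : X\in C]$ for each maximal compatible set $C$, plus a proof that these charts cover $U_\A$. That needs the same explicit elimination you also leave undone on the torus. There, ``rational functions'' must moreover be upgraded to expressions whose denominators are units on $U_\A^\circ$; the paper also treats the torus case as routine.

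\textbf{How the paper closes Step 3.} It uses induction on $n$ together with \cite[Lemma A.1]{clustconf}. The inductive input is not irreducibility of the divisor. It is that $\CC[U_\A]/(u_X)\to R_\A/(f_X)$ is an isomorphism for every $X\in\I_\A$, obtained from Proposition~\ref{prop:fact} on both sides plus the induction hypothesis. The argument then runs:
\begin{enumerate}
\item Given the torus isomorphism, $K=\ker\varphi$ is killed by a power of $\prod_X u_X$.
\item For $a\in K$, injectivity modulo $u_X$ gives $a=u_Xa'$. Then $f_X\varphi(a')=0$, and since $R_\A$ is a domain, $a'\in K$.
\item Hence $K=u_XK$ for every $X$. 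As $K$ is finitely generated, $K=(\prod_X u_X)^mK=0$.
\end{enumerate}
This disposes of extra components and non-reduced structure at once. In the toy model it is exactly this hypothesis that fails: with $B$ the image of $A$ in $A[s^{-1}]$, so $B\cong\CC[s^{\pm1}]$, the map $A/(s)\cong\CC\to B/(s)=0$ is not injective.

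\textbf{Your fallback for Step 1.} Deducing Step 1 from $\CC A_n$ via the maps $\phi$ saves nothing. That $\phi$ sends the $u$-equations of $\A$ into the ideal of $U_{\CC A_n}$ is taken for granted in Theorem~\ref{thm:surj}. Given your telescoping identity, that statement is equivalent to Step 1 itself.
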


\begin{proof}
    We first see that the rational functions $f_X$ satisfy the $u$-equations defining $U_\A$ (and $U_\A^\circ$). A direct calculation shows that the $F$-polynomials satisfy the indentity $F_{i,j-1} F_{i+1,j} = F_{i,j}F_{i+1,j-1} + y_{i+1,j}$. Plugging this into the definition of $f_X$ yields
\[
1-f_{i,j} = \frac{y_{i+1,j}}{F_{i,j-1}F_{i+1,j}},\qquad 1-f_{i} = \frac{y_{j_i,i}}{F_{i,j_i}},\qquad 1-f_{\Sigma i} = \frac{1}{F_{i,k_i}}.
\]
Now we want to see these expressions are exactly the monomials in the $f_X$ given by the compatibility condition of the $u$-equations. From our grid it is easier to see the telescopic cancellations in the following products:
\begin{equation}\label{eq: tellescopic canc}
    \frac{y_{i,j}}{F_{i,j}} = \prod_{\substack{X \in \text{ right rectangle of } (i,j)\\ X \neq (k)}} f_X \qquad \qquad \frac{1}{F_{i,j}} = \prod_{\substack{X \in \text{ left rectangle of } (i,j)\\ X \neq \Sigma (k)}} f_X.
\end{equation}
The key observation is that when we consider a rectangle $r$ as in Figure~\ref{fig: hills and dents} in our grid shaved-off on the top by a Dyck path, we have the following formula
\[
    \prod_{X \text{ label in } r} f_X = \frac{\text{peaks}}{\text{valleys}}
\]
where we abuse terminology and also refer to the left and right corners of $r$ as valleys.
\begin{figure}[h!]
    \centering
\begin{tikzpicture}[scale=0.35]
\def\N{11}      
\def\x0{-3}    
\def\y0{-1}    
\def\xstep{2}  
\def\ystep{1}  
\foreach \r in {0,...,\N}{
  \pgfmathtruncatemacro{\len}{\N-\r}
  \foreach \k in {0,...,\len}{
    \coordinate (M-\r-\k) at ({\x0 + \xstep*\k + \r}, {\y0 + \ystep*\r});
  }
}
\filldraw[green,opacity=0.15] (M-0-5) -- (M-4-1) -- (M-5-1) -- (M-4-2) --(M-5-2) -- (M-4-3) -- (M-6-3) -- (M-4-5) -- (M-0-5);
\draw[green!50!black] (M-0-5) -- (M-4-1) -- (M-5-1) -- (M-4-2) --(M-5-2) -- (M-4-3) -- (M-6-3) -- (M-4-5) -- (M-0-5);
\draw[thick,red] (M-0-0) -- (M-6-0) -- (M-4-2) -- (M-5-2) -- (M-4-3) -- (M-6-3) -- (M-3-6) -- (M-4-6) -- (M-2-8) -- (M-3-8) --node[midway, above right]{$D$} (M-0-11); 
{\color{blue}
\node at (M-5-1) {$\bullet$};
\node at (M-5-2) {$\bullet$};
\node at (M-6-3) {$\bullet$};
}
{\color{red}
\node at (M-4-1) {$\bullet$};
\node at (M-4-2) {$\bullet$};
\node at (M-4-3) {$\bullet$};
\node at (M-4-5) {$\bullet$};
}
\node[green!50!black] at (M-2-4) {$r$};
\end{tikzpicture}
    \caption{The product of $f_{ij}$ in the region $r$ (green) is equal to the product of $F_{ij}$ for the \emph{peaks} (blue) divided by the $F_{ij}$ for the \emph{valleys} (red).}
    \label{fig: hills and dents}
\end{figure}
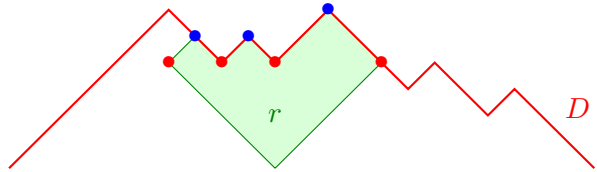
This formula follows from telescopic cancellations. For a $2\times 2$ rectangle as in Figure~\ref{fig: rectangle cancellation} (A), the product of $f_X$ in the rectangle is $f_{M_1}f_{M_2}f_{M_3}f_{M_4} = \frac{F_{k-1,l+1}F_{k+1,l-1}}{F_{k+1,l+1}F_{k-1,l-1}}$. We can iterate this and obtain the same $\frac{\text{up}\cdot\text{down}}{\text{left}\cdot\text{right}}$ formula for bigger rectangles. If we have some boxes shaved-off from the top of the rectangle by the Dyck path, then we must add the $F$'s labeling the peaks in the numerator and the $F$'s labeling the valleys in the denominator to obtain the correct formula. For example, in Figure~\ref{fig: rectangle cancellation} (B) we have $f_{M_1}f_{M_2}f_{M_3} = \frac{\mathbf{F_{k,l+1}F_{k-1,l}}F_{k+1,l-1}}{F_{k+1,l+1}\mathbf{F_{k,l}}F_{k-1,l-1}}$ where in \textbf{bold} we put the $F$'s corresponding to the peaks and valleys.

\begin{figure}[h!]
    \begin{subfigure}{0.48\textwidth}
        \centering
        \begin{tikzpicture}[scale=1,every node/.style={inner sep=2pt}]
  \node (M3) at (3,-1){$\bullet$};
  \node (M33) at ( 2,0){$\bullet$};
  \node  (M44) at ( 4,0){$\bullet$};
  \node  (M23) at ( 1,1){$\bullet$};
  \node (M34) at ( 3,1) {$\bullet$};
  \node  (M45) at ( 5,1){$\bullet$};
  \node  (M24) at (2,2){$\bullet$};
  \node  (M35) at (4,2){$\bullet$};
  \node  (M25) at (3,3){$\bullet$};
  {\footnotesize
  \node[blue] (U33) at ( 3,0) {$M_3$};
  \node[blue] (U23) at (2,1) {$M_4$};
  \node[blue] (U34) at (4,1) {$M_2$};
  \node[blue] (U24) at (3,2) {$M_1$};
}
{
\tiny
    \node[below] at (M3) {$F_{k+1,l-1}$};
    \node[below left] at (M33) {$F_{k,l-1}$};
    \node[below right] at (M44) {$F_{k+1,l}$};
    \node[left] at (M23) {$F_{k-1,l-1}$};
    \node[right] at (M34) {$F_{k,l}$};
    \node[right] at (M45) {$F_{k+1,l+1}$};
    \node[above left] at (M24) {$F_{k-1,l}$};
    \node[above right] at (M35) {$F_{k,l+1}$};
    \node[above] at (M25) {$F_{k-1,l+1}$};
}
  \draw[thick] (M33) -- (M3) -- (M44);
  \draw[thick] (M23) -- (M33) -- (M34) -- (M44) -- (M45) -- (M35) -- (M25) -- (M24) -- (M23);
  \draw[thick] (M24) -- (M34) -- (M35);
\end{tikzpicture}
        \caption{}
    \end{subfigure}
    \begin{subfigure}{0.48\textwidth}
        \centering
        \begin{tikzpicture}[scale=1,every node/.style={inner sep=2pt}]
  \node (M3) at (3,-1){$\bullet$};
  \node (M33) at ( 2,0){$\bullet$};
  \node  (M44) at ( 4,0){$\bullet$};
  \node  (M23) at ( 1,1){$\bullet$};
  \node[red] (M34) at ( 3,1) {$\bullet$};
  \node  (M45) at ( 5,1){$\bullet$};
  \node[red]  (M24) at (2,2){$\bullet$};
  \node[red]  (M35) at (4,2){$\bullet$};
  {\footnotesize
  \node[blue] (U33) at ( 3,0) {$M_2$};
  \node[blue] (U23) at (2,1) {$M_3$};
  \node[blue] (U34) at (4,1) {$M_1$};
}
{
\tiny
    \node[below] at (M3) {$F_{k+1,l-1}$};
    \node[below left] at (M33) {$F_{k,l-1}$};
    \node[below right] at (M44) {$F_{k+1,l}$};
    \node[left] at (M23) {$F_{k-1,l-1}$};
    \node[right] at (M34) {$\mathbf{F_{k,l}}$};
    \node[right] at (M45) {$F_{k+1,l+1}$};
    \node[above left] at (M24) {$\mathbf{F_{k-1,l}}$};
    \node[above right] at (M35) {$\mathbf{F_{k,l+1}}$};
}
  \draw[thick] (M33) -- (M3) -- (M44);
  \draw[thick] (M23) -- (M33) -- (M34) -- (M44) -- (M45) -- (M35);
  \draw[thick] (M23)--(M24) -- (M34) -- (M35);
  \draw[red, thick] (M24) -- (M34) -- (M35);
\end{tikzpicture}
        \caption{}
    \end{subfigure}
    \caption{Cancellations in a product of adjacent $f_{ij}$'s.}
    \label{fig: rectangle cancellation}
\end{figure}
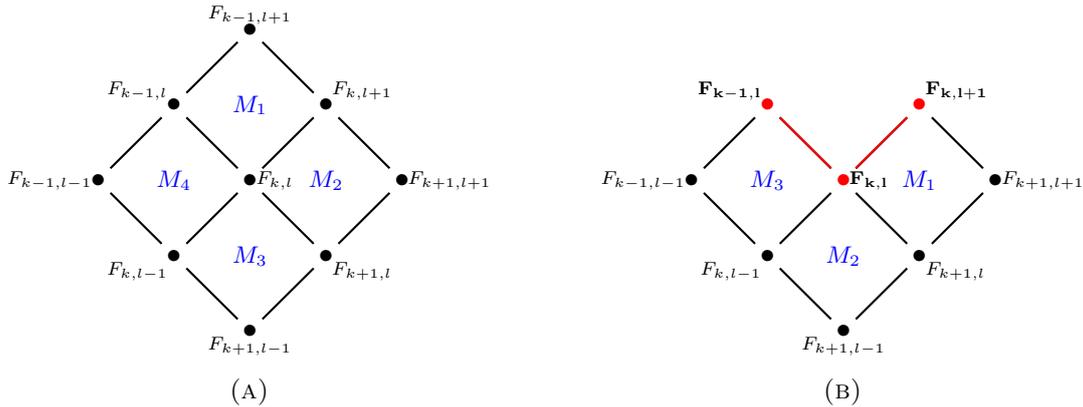

Adding the $f_X$ for the labels $(\Sigma k)$, or separately for $(k)$, in the shaved-off rectangle will cancel some of the $F$'s coming from peaks and valleys of the Dyck path. When we consider the left and right rectangles of $(i,j)$ we cancel in fact all of them. Additionally considering that for the right rectangle we need to add the $y$'s of the $(\Sigma k)$ yields exactly \eqref{eq: tellescopic canc}.

\smallskip

This shows that the $f_X(y)$ satisfy the $u$-equations. It is also clear that in the Laurent polynomial ring $\CC[U_\A^\circ] \subset \CC[u_X, u_X^{-1}]$ we can invert the map $\varphi^\circ: \CC[U_\A^\circ] \to R_\A^\circ$. This gives us a parametrization of the very affine variety $U_\A^\circ$. 

\medskip

Now, we want to see why the map $\varphi: \CC[U_\A] \to R_\A$ given by $u_X \to f_X$ is also an isomorphism. We proceed by induction on $n$. For $n=0,1,2$ this can be checked using computer algebra software. For the induction step we use the same technique as in \cite[Theorem 5.1]{clustconf}. Using Proposition~\ref{prop:fact} and the induction hypothesis we have that the induced map $\CC[U_\A]/(u_X) \to R_\A/(f_X)$ for any $X \in \I_\A$ is an isomorphism. Then, using the commutative algebra statement in \cite[Lemma A.1]{clustconf} it follows that $\varphi$ is an isomorphism. 
\end{proof}

As in \cite[Section 10]{stringy} the toric variety we want to consider is given by the normal fan of the Newton polytope of the $F$-polynomials used in the parametrization of Theorem~\ref{thm:param}. For every quotient algebra $\A$ we define the following polytope as a Minkowski sum
\begin{equation}\label{eq: polytope}
    P_\A := \sum_{M_{i,j} \text{ indecomposable of } \A} \Newt(F_{i,j}). 
\end{equation}
Denote its (inner) normal fan by $\Sigma_\A$. Recall from Section~\ref{sec:2} that if $D$ is the Dyck path corresponding to $\A$ we can read-off which $F$-polynomials appear in the sum \eqref{eq: polytope} by looking at the lattice points in the truncated grid corresponding to $D$. Let $\varepsilon_i$ be the standard basis vectors in $\RR^d$. The \emph{$g$-vector} of $X \in \I_\A$ is 
\begin{equation}\label{eq: gvector}
    g_X := \begin{dcases}
        \varepsilon_i & \text{if } X = (\Sigma i)\\
        -\varepsilon_i & \text{if } X = (i)\\
        \varepsilon_i - \varepsilon_{j} & \text{if } X = (i,j).
    \end{dcases}
\end{equation}

\begin{prop}\label{prop: rays of g vector fan}
    The rays of $\Sigma_\A$ are the $g$-vectors of $\A$.
\end{prop}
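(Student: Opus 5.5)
The plan is to exploit that the normal fan of a Minkowski sum is the common refinement of the normal fans of the summands. Since $P_\A=\sum \Newt(F_{i,j})$ over the indecomposables $M_{i,j}$ of $\A$, the codimension-one cones of $\Sigma_\A$ are contained in the union of the codimension-one cones of the individual fans $\Sigma(\Newt(F_{i,j}))$. The rays of $\Sigma_\A$ are then found by identifying which one-dimensional intersections of walls actually occur.

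First I will compute each $\Newt(F_{i,j})$. Its vertices are $0,\ \varepsilon_i,\ \varepsilon_i+\varepsilon_{i+1},\ \dots,\ \varepsilon_i+\dots+\varepsilon_j$. It is a simplex of dimension $j-i+1$ living in the coordinates $i,\dots,j$. Write a vector as $w$ and set $S_k(w)=w_i+\dots+w_k$, with $S_{i-1}=0$. A vertex $\varepsilon_i+\dots+\varepsilon_k$ is minimized by $w$ exactly when $S_k(w)=\min_{i-1\le m\le j}S_m(w)$. The walls of this fan are therefore the hyperplanes $S_k=S_m$, i.e. $w_{k+1}+\dots+w_m=0$ for $i-1\le k<m\le j$.

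Next I will treat the $\Sigma_\A$-cone of $w$ as determined by the combinatorial type of $w$. This type is the collection, over all $M_{i,j}$ on or below $D$, of the sets of minimizing indices. A ray is a $w$ whose type is cut out by $n-1$ independent equalities. I will then check directly that each $g$-vector has a one-dimensional type.

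The cases are as follows.
\begin{itemize}
\item For $g_{\Sigma i}=\varepsilon_i$, every summand containing $i$ forces a partial-sum minimum to avoid jumping across $i$. The equalities $w_k=0$ for $k\neq i$ are all forced, since the summands $M_{k,k}$ are always present and pin those coordinates.
\item For $g_{i}=-\varepsilon_i$ the argument is analogous. Here the relevant data is which $M_{j_i,\cdot}$ reach $i$; this is where $D$ enters, but the singletons $M_{k,k}$ already give $w_k=0$ for $k\ne i$.
\item For $g_{i,j}=\varepsilon_i-\varepsilon_j$ with $(i,j)$ a diamond below $D$, the points $M_{i,j-1}$ and $M_{i+1,j}$ lie on or below $D$. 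This produces the wall equality $w_i+\dots+w_{j-1}=0$ together with $w_k=0$ for $i<k<j$. Combined with the singleton conditions off $[i,j]$, the type is one-dimensional.
\end{itemize}

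The converse is the main obstacle: every ray of $\Sigma_\A$ must be a $g$-vector. My approach is to show that a generic point of any maximal cone is detected by a "cluster" of $n$ pairwise compatible elements of $\I_\A$ whose $g$-vectors span that cone. To do this, take a generic $w$ and read off the order of its partial sums $S_0,\dots,S_n$ restricted to the allowed intervals, i.e. the intervals $[i,j]$ with $M_{i,j}$ on or below $D$. The maximal cone is then the simplicial cone generated by the $g$-vectors of the compatible elements that appear. Its rays are the coordinates of $w$ relative to the basis of those $g$-vectors, with the signs of $w_k$ and of the relevant interval sums giving membership.

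An alternative route for the converse, which I would try first, avoids this direct analysis. By Theorem~\ref{thm:param}, each $u_X=f_X$ vanishes along the toric boundary divisor of $X_{P_\A}$ corresponding to the ray $g_X$. This follows by computing the tropicalization $\min$ of $\trop(f_X)$ on $g_X$ via the telescoping formulas \eqref{eq: tellescopic canc}. Since $|\I_\A|$ equals the number of facets, as the polytope is built from the same grid, all rays are accounted for. I would, however, verify the count of facets via the first argument, so that no circularity with Section~\ref{sec:poly} arises.
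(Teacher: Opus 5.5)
\paragraph{The converse direction is missing.} The step you call the main obstacle, that every ray of $\Sigma_\A$ is a $g$-vector, is the real content of the proposition, and neither of your routes proves it.

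Your first route asserts that every maximal cone of $\Sigma_\A$ is the simplicial cone spanned by the $g$-vectors of $n$ pairwise compatible elements of $\I_\A$. That is strictly stronger than what you are proving: it already contains Lemma~\ref{lem: compatibilityFacets} and Proposition~\ref{prop:simple}. You give no mechanism for extracting such a cluster from a generic $w$, nor for showing that the cone of $\Sigma_\A$ containing $w$ equals the cone those $g$-vectors span.

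Your second route is circular. The claim ``$|\I_\A|$ equals the number of facets'' is exactly the missing half of the statement. Computing $\trop(f_X)(g_X)=1$ only shows that $g_X$ is relevant; it cannot exclude further rays. The picture in which the divisors $u_X=0$ exhaust the toric boundary (Theorem~\ref{thm:toric}, Corollary~\ref{cor:toric}) is itself derived from this proposition via Proposition~\ref{prop: trop}.

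\paragraph{The missing idea.} It is latent in your own wall computation. Set $z_k=w_1+\dots+w_k$ with $z_0=0$. Then every wall $S_k=S_m$ becomes a braid hyperplane $z_k=z_m$. Equivalently, the unimodular substitution $x_k=y_1\cdots y_k$ turns $\Newt(F_{i,j})$ into the coordinate simplex $\Delta_{[i-1,j]}$.

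Hence the braid fan refines $\Sigma_\A$. So every ray of $\Sigma_\A$ is the image under $T^{-t}$ of some $\sum_{k\in F}\epsilon_k$, with $F$ a proper nonempty subset of $\{0,\dots,n\}$. This gives a finite list of candidates.

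The paper sorts these candidates by a dimension count on the face $\sum_{G\subseteq F}\Delta_G+\sum_{G\not\subseteq F}\Delta_{G\setminus F}$ (Lemma~\ref{lem: faces}, Proposition~\ref{prop: facet description}):
\begin{itemize}
\item $F$ must be an interval;
\item $[0,i]$ and $[i,n]$ always give facets;
\item an interior $[i,j]$ gives a facet if and only if $\{i-1,j+1\}$ lies in some $G\in\mathcal{G}$, i.e.\ if and only if the diamond $(i,j+1)$ lies below $D$.
\end{itemize}
Applying $T^{-t}$ then yields exactly the $g$-vectors. Without such a reduction plus an exclusion criterion (for non-intervals, and for interior intervals whose diamond lies above $D$), extra rays are not ruled out.

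\paragraph{An error in the diamond case of the easy direction.} Take $w=\varepsilon_i-\varepsilon_j$. The summands $\Newt(F_{i,j-1})$ and $\Newt(F_{i+1,j})$ each have a unique minimizing vertex, and $w_i+\dots+w_{j-1}=1\neq 0$. The equalities you list would cut out the line through $\varepsilon_j$, not the line through $\varepsilon_i-\varepsilon_j$.

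The extra wall comes instead from the top corner $M_{i,j}$. Its partial sums take the values $0,1,\dots,1,0$, so the minimum is attained at both ends, which forces $w_i+\dots+w_j=0$ on the cone. Together with $w_k=0$ for $k\neq i,j$ from the singleton summands, this pins down the ray. This is precisely where the hypothesis that the diamond $(i,j)$ lies below $D$ is used. Your singleton argument for $\pm\varepsilon_i$ is fine.
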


We defer the proof of Proposition~\ref{prop: rays of g vector fan} until Section~\ref{sec:poly} where we will study in more detail the polytopes $P_\A$. However, we require the result to introduce the main theorem of this section. 

\medskip

Given a Laurent polynomial we can obtain a piecewise linear function by replacing usual arithmetic by tropical arithmetic (where we take a trivial valuation on $\CC$). That is, we replace usual addition by tropical addition: $\min$, and usual multiplication by tropical multiplication: $+$. Explicitly from $p \in \CC[y,y^{-1}]$ we obtain a function $\trop(p):\RR^n \to \RR$ as follows
\[
    p = \sum_{a \in \ZZ^n} c_a y^a \quad  \mapsto \quad \trop(p) = \min(\langle a, Y\rangle: a \in {\rm supp}(p))
\]
where ${\rm supp}(p) := \{a\in \ZZ^n: c_a \neq 0 \}$ which is finite. Recall that the normal fan of $\Newt(p)$ is also obtained as the common domains of linearity of $\trop(p)$. As in \cite[Section 5]{clustconf} we obtain a description of the coordinate ring of $U_\A$ using the tropicalizations of our parametrization. The following result will be helpful.
 \begin{lem}\label{lem: param and gvectors}
        Let $X,Y \in \I_\A$. Then, $$\trop(f_X)(g_{Y}) = \delta_{X,Y}.$$
\end{lem}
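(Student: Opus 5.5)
Since $\trop$ turns products and quotients into sums and differences, $\trop(f_X)$ splits into a combination of the functions $\trop(F_{k,l})$ and $\trop(y_i)$. So the plan is to first evaluate every $\trop(F_{k,l})$ on the three kinds of $g$-vectors, and then assemble the answer case by case using \eqref{eq: param}.

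\emph{Step 1: the values of $\trop(F_{k,l})$.} The support of $F_{k,l}$ is $\{0, \varepsilon_k, \varepsilon_k+\varepsilon_{k+1},\dots,\varepsilon_k+\cdots+\varepsilon_l\}$. Hence
\[
\trop(F_{k,l})(Y)=\min\bigl(0,\; Y_k,\; Y_k+Y_{k+1},\;\dots,\; Y_k+\cdots+Y_l\bigr),
\]
the minimum of the partial sums of $Y$ over initial segments of $[k,l]$ (including the empty segment). From this we read off:
\begin{itemize}
\item For $Y=\varepsilon_i$: $\trop(F_{k,l})(Y)=0$ in all cases.
\item For $Y=-\varepsilon_i$: $\trop(F_{k,l})(Y)=-1$ if $k\le i\le l$, and $0$ otherwise.
\item For $Y=\varepsilon_i-\varepsilon_j$ with $i<j$: the partial sums are $0$ until index $j$ is reached. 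If $i\ge k$, the $+1$ at $i$ is counted before the $-1$ at $j$. So $\trop(F_{k,l})(Y)=-1$ exactly when $k\le j\le l$ and $k>i$, and $0$ otherwise.
\end{itemize}
Also $\trop(y_i)(Y)=Y_i$.

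\emph{Step 2: case analysis.} We combine these values using \eqref{eq: param}, going through $X$ of type diamond, up step, or down step against $Y$ of each type.

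For $X=(i,j)$, we evaluate the alternating sum of the four indicator values. With $g_Y=\varepsilon_a-\varepsilon_b$, the relevant indicator is $[k>a,\ k\le b\le l]$. Substituting the four corners $(i,j)$, $(i+1,j-1)$, $(i+1,j)$, $(i,j-1)$ should give an inclusion–exclusion that is nonzero only when $a=i$ and $b=j$. Against $\pm\varepsilon_m$, the same inclusion–exclusion for the indicator $[k\le m\le l]$ vanishes identically.

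For $X=(i)$, we have $f_i=F_{j_i,i-1}/F_{j_i,i}$. Against $-\varepsilon_m$ this gives $[j_i\le m\le i]-[j_i\le m\le i-1]$ times $-1$ with a sign, which equals $1$ exactly when $m=i$. Against $\varepsilon_a-\varepsilon_b$ we get $-[j_i>a,\ b=i]$. This term vanishes because the diamond $(a,b)=(a,i)$ lies below $D$, so $a\ge j_i$: the up step $i$ sits at column $j_i$, and diamonds in row $i$ lie weakly to the right of it.

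For $X=(\Sigma i)$, we have $\trop(f_{\Sigma i})(Y)=Y_i+\trop(F_{i+1,k_i})(Y)-\trop(F_{i,k_i})(Y)$. Against $\varepsilon_m$ this is $\delta_{im}$. Against $-\varepsilon_m$ it is $-\delta_{im}-[i<m\le k_i]+[i\le m\le k_i]=0$. Against $\varepsilon_a-\varepsilon_b$ it is $\delta_{ia}-\delta_{ib}$ plus indicator terms. These should cancel using the constraint that diamonds lie below $D$, here $b\le k_i$ when $a\le i$.

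\emph{Main obstacle.} The main obstacle is the mixed cases, where one must check that the Dyck path constraints ($j_i\le a$ and $b\le k_i$ for diamonds below $D$) exactly kill the boundary terms. There is also a risk that the index conventions (for example whether $(\Sigma i)$ pairs with $\varepsilon_i$ or an off-by-one variant, and whether diamonds are labelled by interval endpoints) need adjusting. I would first verify the $A_2$ example of the introduction numerically to pin down these conventions, and then carry out the case analysis above, using the telescoping identities \eqref{eq: tellescopic canc} as a consistency check.
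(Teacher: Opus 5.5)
Your proposal is correct and follows the same argument as the paper. Both tabulate $\trop(F_{k,l})$ on $\varepsilon_m$, $-\varepsilon_m$ and $\varepsilon_a-\varepsilon_b$, then combine case by case, with the constraint that a diamond $(a,b)$ below $D$ satisfies $a\ge j_b$ (equivalently $b\le k_a$) killing the mixed terms. The paper writes out only the $(\Sigma i)$ case.

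The steps you left as ``should'' do go through: for diamonds you get $\delta_{b,j}(\delta_{a,i}-\delta_{b,i})=\delta_{a,i}\delta_{b,j}$, and for down steps you get $(\delta_{i,a}-\delta_{i,b})\,[b>k_i]=0$. There is one harmless sign slip: the mixed up-step term is $+[a<j_i,\ b=i]$, not $-[a<j_i,\ b=i]$, and it vanishes anyway.
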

\begin{proof}
Note that $\trop(F_{i,j})=\min\{0,Y_i,Y_i+Y_{i+1},\ldots,Y_i+\ldots +Y_j\}$. Then, for $l\leq k$
\[
    \trop(F_{i,j})(\varepsilon_l) = 0, \quad \trop(F_{i,j})(-\varepsilon_l) = -\mathbbm{1}_{i\leq l \leq j}, \quad \trop(F_{i,j})(\varepsilon_l-\varepsilon_k) = -\mathbbm{1}_{l<i\leq k \leq j}
\]
where $\mathbbm{1}$ is an indicator function.
 It follows that 
\begin{align}\label{eq: eval trop f}
    &\trop(f_{\Sigma i})(\varepsilon_l) = \trop(y_i)(\varepsilon_l)+\trop(F_{k_i,i+1})(\varepsilon_l)-\trop(F_{k_i,i})(\varepsilon_l)=\delta_{i,l} \nonumber\\ 
    &\trop(f_{\Sigma i})(-\varepsilon_l) = -\delta_{l,i} -\mathbbm{1}_{i+1\leq l \leq k_i}+\mathbbm{1}_{i\leq l \leq k_i} = 0 \nonumber\\
    &\trop(f_{\Sigma i})(\varepsilon_l-\varepsilon_j) = \delta_{i,l}-\delta_{i,j}-\mathbbm{1}_{l<i+1\leq j \leq k_i}+\mathbbm{1}_{l<i\leq j \leq k_i}.
\end{align}
We now check that if $(l,j) \in \I_\A$ then \eqref{eq: eval trop f} equals zero. Recall that $(l,j) \in \I_\A$ if and only if the square it labels is below the Dyck path corresponding to $\A$. In particular, the diamond it labels is strictly below the down-step labeled by $(\Sigma i)$. Concretely we get the inequalities: $l\leq i+1$ or $j \leq k_i$. A case by case computation yields indeed that \eqref{eq: eval trop f} is $0$. A similar calculation shows the result for the $f_X$ labeled by the other elements of $\I_\A$.
\end{proof}

\begin{prop}\label{prop: trop}
    The ring $R_\A$ is the subring of $\CC(y)$ generated by Laurent monomials in 
    \[
    \{y_1,\ldots, y_n\} \cup \{ F_{i,j}: M_{i,j} \text{ is an indecomposable module of } \A\}
    \]
    whose tropicalization is nonnegative on all of $\RR^n$.
\end{prop}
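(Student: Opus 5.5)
The plan is to compare the two rings directly. Write $S_\A$ for the subring of $\CC(y)$ generated by those Laurent monomials in the $y_i$ and the $F_{i,j}$ (for $M_{i,j}$ indecomposable over $\A$) whose tropicalization is nonnegative on $\RR^n$. Two facts drive everything:
\begin{enumerate}
\item For a Laurent monomial $m$ in these generators, $\trop(m)$ is the corresponding integer combination of the functions $\trop(y_i)$ and $\trop(F_{i,j})$. Each of these is linear on every cone of the normal fan $\Sigma_\A$ of $P_\A$, since $\Sigma_\A$ is the common refinement of the normal fans of the summands $\Newt(F_{i,j})$.
\item The fan $\Sigma_\A$ is complete, and by Proposition~\ref{prop: rays of g vector fan} its rays are exactly the $g$-vectors $g_X$, $X\in \I_\A$.
\end{enumerate}
Hence a linear function on a cone is determined by its values on the rays of that cone. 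It follows that $\trop(m)\ge 0$ everywhere if and only if $\trop(m)(g_X)\ge 0$ for all $X\in\I_\A$. Likewise, $\trop(m)\equiv 0$ if and only if $\trop(m)(g_X)=0$ for all $X$.

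For the inclusion $R_\A\subseteq S_\A$, each $f_X$ is by \eqref{eq: param} a Laurent monomial in the $y_i$ and the $F_{i,j}$ of modules on or below $D$. (We use $F_{k,l}=1$ in the degenerate cases, and check that the $F$'s occurring, such as $F_{j_i,i}$ or $F_{i,k_i}$, are lattice points on $D$ and hence modules of $\A$.) By Lemma~\ref{lem: param and gvectors}, $\trop(f_X)(g_Y)=\delta_{X,Y}\ge 0$ on all rays, so $\trop(f_X)\ge 0$ everywhere by the remark above. Thus every generator of $R_\A$ lies in $S_\A$.

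For the reverse inclusion, let $m$ be a monomial with $\trop(m)\ge 0$. Set $a_X:=\trop(m)(g_X)\in\ZZ_{\ge 0}$; these are integers because the $g$-vectors are integral. Put $m' := m\prod_X f_X^{-a_X}$. Then $\trop(m')(g_Y) = a_Y - a_Y = 0$ by Lemma~\ref{lem: param and gvectors}, so $\trop(m')\equiv 0$. It remains to show that $m'=1$, which gives $m=\prod_X f_X^{a_X}\in R_\A$.

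For this I would show that the multiplicative group $G$ generated by the $y_i$ and the relevant $F_{i,j}$ coincides with the group generated by the $f_X$. I would also show that both are free of rank $|\I_\A|$; the count is $n$ plus the number of modules, which equals $2n$ plus the number of diamonds, as the $A_2$ example confirms. The argument is an explicit triangular inversion on the grid. The $f_{\Sigma i}$ and $f_i$ express the $F$'s along the Dyck path in terms of $y$'s and neighbouring $F$'s. Then each diamond relation $f_{i,j}=F_{i,j}F_{i+1,j-1}/(F_{i+1,j}F_{i,j-1})$ lets us solve for the $F$ at the bottom corner of the diamond in terms of $F$'s higher up, proceeding downward from $D$ to the diagonal and recovering the $y_i$ last.

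Once $G=\langle f_X\rangle$ is established, write $m'=\prod f_X^{b_X}$ with $b_X\in\ZZ$. Then $0=\trop(m')(g_Y)=b_Y$ for every $Y$, so $m'=1$. This also shows that the $f_X$ are multiplicatively independent.

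I expect the main obstacle to be this bookkeeping step: verifying carefully that the $y_i$ and $F_{i,j}$ are recovered from the $f_X$ with integer exponents. In particular, the boundary conventions for $f_i$ and $f_{\Sigma i}$ must match precisely the lattice points on $D$. The tropical part reduces cleanly to Lemma~\ref{lem: param and gvectors} and Proposition~\ref{prop: rays of g vector fan}.
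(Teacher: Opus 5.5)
Your proposal is correct and is essentially the paper's argument. The tropicalization of such a monomial is linear on the cones of $\Sigma_\A$, so its nonnegativity is read off on the rays $g_X$ (Proposition~\ref{prop: rays of g vector fan}), and Lemma~\ref{lem: param and gvectors} then gives $m=\prod_X f_X^{\trop(m)(g_X)}$.

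The one difference is that you justify this last factorization, i.e.\ that a monomial with $\trop(m)\equiv 0$ equals $1$; the paper asserts it without comment. Your triangular inversion does work: the $F$ at a grid point $(a,b)$ comes out as a monomial in the $f_X$ divided by $y_1\cdots y_{a-1}$, and the baseline points then yield each $y_k$. It is quicker, though, to observe the following. By the Lemma, evaluation at the $g_X$ maps the free abelian group on the $y_i$ and $F_{i,j}$ (of rank $|\I_\A|$ by unique factorization) onto $\ZZ^{\I_\A}$, and a surjection between free abelian groups of equal finite rank is an isomorphism.
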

\begin{proof}
    Recall that the normal fan of $P_\A$ is given by the domains of linearity of the piecewise linear function $\trop(F_\A)$ where
    \[
        F_\A := \prod_{M_{i,j} \text{ indecomposable of } \A} F_{i,j}.
    \]
    Let $L(y)$ be a Laurent monomial as in the statement. As the factors $y_i$ do not affect the linearity of the tropicalization, it follows that the domains of linearity of $\trop(L)$ is a coarsening of the normal fan of $P_\A$. Hence, to determine the values of $\trop(L)$ we only need to know its values in the rays of the normal fan of $P_\A$, i.e. the $g$-vectors of $\A$.
    By Lemma~\ref{lem: param and gvectors} we can write $L(y)$ as 
    \[L(y) = \prod_{X \in \I_\A} f_X^{\trop(L)(g_X)}.\]
   Then, $L(y) \in R_\A$ if and only if $\trop(L)(g_M)\geq 0$ for all $g$-vectors of $\A$ which is equivalent to $\trop(L)$ being nonnegative. Every element in $R_\A$ is a polynomial on the $f_X$. As every monomial on $f_X$ arises as a Laurent monomial on $\{y_i,F_{i,j}\}$ whose tropicalization is nonnegative, the statement follows.
\end{proof}

We have now build up all the necessary results to show the main result of this section where we naturally see $U_\A$ inside a toric variety. We refer to \cite{toric} for standard results about toric varieties.

\begin{thm}\label{thm:toric}
    $U_\A$ is isomorphic to the affine open $\{F_\A \neq 0\}$ inside the projective toric variety associated to $\Sigma_\A$, $\mathcal{X}_{\Sigma_\A}$.
\end{thm}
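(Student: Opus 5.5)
The plan is to identify both sides with the same subring of $\CC(y)$. By Theorem~\ref{thm:param}, $\CC[U_\A]\cong R_\A$. By Proposition~\ref{prop: trop}, $R_\A$ is spanned by the Laurent monomials $L$ in $y_1,\dots,y_n$ and the $F_{i,j}$ (for $M_{i,j}$ indecomposable over $\A$) with $\trop(L)\ge 0$ on $\RR^n$. So it suffices to show that the coordinate ring of $\{F_\A\neq 0\}\subset \mathcal{X}_{\Sigma_\A}$ is this same ring.

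To describe the open set, recall that $P_\A=\Newt(F_\A)$ gives a very ample (after scaling) line bundle on $\mathcal{X}_{\Sigma_\A}$. Its global sections are spanned by the monomials $y^m$ with $m\in P_\A\cap\ZZ^n$, so $F_\A$ is a global section. The divisor $\{F_\A=0\}$ is an ample effective divisor, and its complement is affine. Its coordinate ring is the degree-zero localization
\[
\bigcup_{k\ge 0}\Big\{ G/F_\A^k \;:\; \mathrm{supp}(G)\subset kP_\A\Big\}\subset \CC(y).
\]
I need to check that $F_\A$ does not vanish identically on any torus orbit. This holds because every $F_{i,j}$ has constant term $1$ and its vertex coefficients are $1$, so the initial form of $F_\A$ at every face is a nonzero product of initial forms.

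The core step is to match this localization with the ring of Proposition~\ref{prop: trop}. The inclusion $R_\A\subseteq \CC[\{F_\A\ne0\}]$ is the easy direction. Take a Laurent monomial $L=y^a\prod F_{i,j}^{b_{ij}}$ with $\trop(L)\ge 0$. Multiply numerator and denominator by suitable powers of the $F_{i,j}$ so that the denominator becomes $F_\A^k$. Then $\trop(L)\ge 0$ says exactly that the numerator monomials' support function is bounded below by that of $kP_\A$, that is, $\mathrm{Newt}(\text{numerator})\subseteq kP_\A$. Here I use that Newton polytopes of products are Minkowski sums, so the support function is additive. For the reverse inclusion, take $G/F_\A^k$ with $\mathrm{supp}(G)\subset kP_\A$. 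Each term $y^m/F_\A^k$ is a Laurent monomial in $y$ and the $F_{i,j}$, and its tropicalization is $\langle m,\cdot\rangle-k\,\trop(F_\A)$. This is $\ge 0$ because $m\in kP_\A$ and $\trop(F_\A)=\min_{p\in P_\A}\langle p,\cdot\rangle$. So each term lies in $R_\A$ by Proposition~\ref{prop: trop}.

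The main obstacle is justifying the description of $\mathcal{O}(\{F_\A\ne0\})$ as the degree-zero localization when $\mathcal{X}_{\Sigma_\A}$ might not be normal-projectively embedded by $P_\A$ itself. I would handle this by replacing $P_\A$ with a multiple $cP_\A$ that is very ample (the fan $\Sigma_\A$ is unchanged). The localization at $F_\A^c$ gives the same ring, since the union over $k$ absorbs the scaling. I would then cite \cite{toric} for the fact that on a projective toric variety the complement of an ample section's zero locus has the homogeneous localization as its coordinate ring. Proposition~\ref{prop: rays of g vector fan} and Lemma~\ref{lem: param and gvectors} enter only through Proposition~\ref{prop: trop}. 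Finally, I would note that under the isomorphism the torus $(\CC^*)^n$ corresponds to $U_\A^\circ$, which is consistent with Theorem~\ref{thm:param}.
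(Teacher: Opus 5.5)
Your proposal is correct and takes the same route as the paper: it uses Proposition~\ref{prop: trop} to identify the coordinate ring of the complement of the section $\{F_\A=0\}$ of the line bundle of $P_\A$ with $R_\A$. You differ only in carrying out explicitly, via support functions and in both directions, the ``direct check'' the paper leaves to the reader, and in getting very ampleness by passing to a multiple $cP_\A$ (ampleness alone would already suffice) rather than citing unimodularity of $\Sigma_\A$ from \cite{tilting}.
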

\begin{proof}
    By Proposition~\ref{prop: trop} we can apply directly the technique from \cite[Section 10]{stringy}. By \cite{tilting} the normal fan of $P_\A$ is unimodular and hence in particular $P_\A$ is very ample. Then, we can consider an embedding of the (abstract) toric variety $\mathcal{X}_{\Sigma_\A}$ into $\PP^{l-1}$, where $l$ is the number of lattice points in $P_\A$. In this embedding $H = \{F_\A = 0\} \subset X_{P_\A}$ is a hyperplane as every monomial in $F_\A$ corresponds to a lattice point. It is then a direct check that the coordinate ring of $X_{P_\A}\setminus H$ is isomorphic to $R_\A$. 
\end{proof}

With the isomorphim from Theorem~\ref{thm:toric} we can give a description of the boundary poset of the nonnegative part of our configuration spaces: $(U_{\A})_{\geq0}$. In particular, we get a nice combinatorial description of the intersections of the divisors from Proposition~\ref{prop:fact}.

\begin{cor}\label{cor:toric}
    The face lattice of $P_\A$ is isomorphic to the poset of boundary strata of $(U_{\A})_{\geq0}$.
\end{cor}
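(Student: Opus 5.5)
The plan is to transport the standard description of the nonnegative part of a projective toric variety through the isomorphism of Theorem~\ref{thm:toric}. Let $\mathcal{X}_{\Sigma_\A}$ be the projective toric variety of $P_\A$. Its nonnegative part $(\mathcal{X}_{\Sigma_\A})_{\geq 0}$ is homeomorphic to $P_\A$ via the algebraic moment map, and this homeomorphism respects the orbit stratification: each torus orbit closure $V(\sigma)$ meets the nonnegative part in a closed cell that maps onto the face of $P_\A$ dual to the cone $\sigma\in\Sigma_\A$. Hence the face lattice of $P_\A$ is the poset, under inclusion, of the sets $V(\sigma)_{\geq 0}$.

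The first step is to check that the open chart $\{F_\A\neq 0\}$ contains the whole nonnegative part of $\mathcal{X}_{\Sigma_\A}$. The polynomial $F_\A$ is a product of $F$-polynomials, each a sum of monomials with coefficient $1$. In the embedding into $\PP^{l-1}$, $F_\A$ is a linear form with positive coefficients on the lattice points of $P_\A$, including all vertices. On a point of the nonnegative part the homogeneous coordinates are nonnegative and not all zero. On each orbit the coordinates indexed by the vertices of the corresponding face are strictly positive, so $F_\A>0$ there.

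Consequently $(U_\A)_{\geq 0}$, which is the set of points with all $u_X=f_X\geq 0$, corresponds to $(\mathcal{X}_{\Sigma_\A})_{\geq 0}$. Two points need justification:
\begin{itemize}
\item[(a)] Under the parametrization of Theorem~\ref{thm:param}, positive $y$ give positive $u_X$. This holds because each $f_X$ is subtraction-free in $y$.
\item[(b)] Conversely, every nonnegative point of $U_\A$ is a limit of positive ones.
\end{itemize}
I would derive (b) from the toric identification. The coordinate functions $u_X$ on the chart are ratios of monomials to $F_\A$ (Proposition~\ref{prop: trop}), so they are nonnegative exactly on the nonnegative part.

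The second step is to identify the boundary strata. By Proposition~\ref{prop: rays of g vector fan}, the rays of $\Sigma_\A$ are the $g_X$. By Lemma~\ref{lem: param and gvectors}, $\trop(f_X)(g_Y)=\delta_{X,Y}$, so $u_X=f_X$ vanishes to order exactly one along the toric divisor $D_{g_X}$ and along no other toric divisor. The divisor $\{u_X=0\}\cap(U_\A)_{\geq 0}$ is therefore the facet of $P_\A$ with normal $g_X$. The boundary strata of $(U_\A)_{\geq 0}$, namely the sets where prescribed collections of $u_X$ vanish, then correspond to intersections of facets, that is, to faces of $P_\A$. Since each stratum is the nonnegative part of an orbit closure, these intersections are nonempty exactly when the $g_X$ span a cone of $\Sigma_\A$, and inclusion is preserved in both directions.

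The main obstacle is the first step: checking that the whole nonnegative part lies inside the affine chart, and that the nonnegative points of $U_\A$ are exactly the nonnegative toric points. If the positivity-of-coefficients argument is not sharp enough, I would use Proposition~\ref{prop:fact} to run an induction on $n$. Each divisor stratum is a product of lower-rank spaces whose nonnegative parts are, by induction, the face lattices of the corresponding smaller polytopes, and these should match the facets of $P_\A$. As a consistency check, the resulting poset is the one of pairwise compatible subsets described after Definition~\ref{dfn: u variety}.
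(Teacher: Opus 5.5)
This is the paper's argument. You identify $U_\A$ with the chart $\{F_\A\neq 0\}$ of $\mathcal{X}_{\Sigma_\A}$ via Theorem~\ref{thm:toric}, use subtraction-freeness of $F_\A$ to put all of $(\mathcal{X}_{\Sigma_\A})_{\geq 0}$ inside that chart, and read off the face lattice from the moment map. Your observation that each orbit's face contains a vertex of $P_\A$, where $F_\A$ has a positive coefficient, in fact supplies the boundary case that the paper's ``does not vanish in the positive orthant'' leaves implicit.

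The one step to repair is (b). The $u_X$ are not ratios of monomials to $F_\A$ but ratios $N_X/F_\A$ with $N_X$ a nonnegative combination of lattice-point coordinates (e.g.\ $u_{12}=F_{12}^2/F_\A$ for $\CC A_2$), and this only gives that toric-nonnegative points have all $u_X\geq 0$. For the converse, put a point with all $u_X\geq 0$ in its torus orbit $O(\sigma)$ and use $y^m=\prod_X u_X^{\langle m,g_X\rangle}$ (the identity underlying Proposition~\ref{prop: trop}). For $m\in\sigma^\perp$ only the $u_X$ with $g_X\notin\sigma$ occur, and these are nonzero on $O(\sigma)$ by your divisor computation, hence positive. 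So every character of $O(\sigma)$ is positive at the point, and it lies in $(\mathcal{X}_{\Sigma_\A})_{\geq 0}$.
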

\begin{proof}
    As all $F$-polynomials are substraction free then so is $F_\A$. Hence, it does not vanish in the positive orthant and $(U_{\A})_{\geq0}$ is isomorphic to the positive part of the toric variety $(X_{P_\A})_{\geq 0}$.
\end{proof}

The toric language also gives us an alternative way of understanding the monomial maps from Section~\ref{sec:3} as iterative blow-down maps. By Theorem \ref{thm:surj} it is enough to look at the covering relations in the Dyck path poset and describe their corresponding monomial maps. Let $\A', \A$ be two linear Nakayama algebras with corresponding Dyck paths $D$ and $D'$ such that $D \leq D'$ is a covering relation. In particular, this means that $|\I_\A| = |\I_{\A'}|+1$.

\begin{prop}\label{prop: toric}
   Denote $\mathcal{H}=\mathbb{V}(F_{\A}) \subset \mathcal{X}_{\Sigma_\A}$ and $\mathcal{H}'=\mathbb{V}(F_{\A'}) \subset \mathcal{X}_{\Sigma_{\A'}}$. There is a toric blow-up $\pi:\mathcal{X}_{\Sigma_\A} \to \mathcal{X}_{\Sigma_{\A'}}$ such that $\pi^{-1}(\mathcal{H}') \subset \mathcal{H}$. In particular, $\pi$ restricts to a well defined map $\pi_U: U_\A \to U_{\A'}$.
\end{prop}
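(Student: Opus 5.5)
We first fix notation. Since $D \le D'$ is a covering relation, $D'$ is obtained from $D$ by turning a single valley $(a,b)$ of $D$ into a peak. In the grid this adds exactly one diamond $X_0=(i,j)$ to $\I$ and one new indecomposable module $M_{i,j}$. So $\A'$ has the larger Dyck path, and
\[
P_{\A'} = P_\A + \Newt(F_{i,j})
\]
(we follow the roles of $\A,\A'$ so that the fan with the extra ray is the one that is blown up). The plan is to compare normal fans. By Proposition~\ref{prop: rays of g vector fan}, the rays of the two fans are the $g$-vectors. Hence the finer fan has exactly one extra ray, $g_{X_0}=\varepsilon_i-\varepsilon_j$. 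Moreover, the normal fan of a Minkowski sum is the common refinement of the normal fans of the summands. So the fan of the larger polytope refines the fan of the smaller one, and a refinement of complete fans induces a proper birational toric morphism $\pi$. We will check that the refinement is a \emph{star subdivision} at $g_{X_0}$. Then $\pi$ is the blow-up of the torus-orbit closure of the cone $\sigma$ containing $g_{X_0}$ in its relative interior.

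For the star subdivision claim, we identify $\sigma$. From the formulas for $g$-vectors, $g_{X_0} = \varepsilon_i - \varepsilon_j = g_{\Sigma i} + g_{j}$, and after the mutation the two steps of $D$ at the valley become exactly the steps $(\Sigma i)$ and $(j)$ (up to the index conventions of the Dyck path labels). The first task is to check that $(\Sigma i)$ and $(j)$ are compatible in $D$ (they form the valley) but both are incompatible with $X_0$ in $D'$. Next, using the unimodularity cited from \cite{tilting} and the fact that maximal cones are spanned by $g$-vectors of maximal compatible sets (this is what Proposition~\ref{prop:fact} and Corollary~\ref{cor:toric} give on the polytope side), we show two things. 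First, every maximal cone of the coarser fan not containing the 2-cone $\tau=\langle g_{\Sigma i}, g_j\rangle$ survives unchanged. Second, every maximal cone containing $\tau$ splits into the two cones obtained by replacing $g_{\Sigma i}$, respectively $g_j$, by $g_{X_0}$. This is a combinatorial check on compatibility (Definition~\ref{dfn: comp degree}): an element of $\I$ other than $X_0$ is compatible with $X_0$ in $D'$ iff it is compatible with both $(\Sigma i)$ and $(j)$ in $D$. We expect this compatibility bookkeeping, done via the light-ray quadrilaterals, to be the main obstacle. So $\pi$ is the blow-up along the codimension-2 orbit closure $V(\tau)$.

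Finally, for the hypersurfaces, we pull back $F_{\A'}$ along $\pi$ in the torus chart. Since $F_{\A'}=F_\A\cdot F_{i,j}$ in $\CC[y]$, the larger product vanishes wherever either factor does. The convention $\pi^{-1}(\mathcal{H}')\subset\mathcal{H}$ must be read with the blown-up variety on the source side. Homogenising with respect to the two polytopes, we check on each affine chart that the pulled-back section $\pi^*F$ for the coarser polytope divides the section for the finer polytope, up to a monomial in the exceptional divisor. Using that the finer polytope is the coarser one plus $\Newt(F_{i,j})$, this shows that the zero locus of the pullback is contained in the zero locus of the finer $F$. Removing the hypersurfaces then gives $\pi_U$ between the affine opens, which are identified with $U$ by Theorem~\ref{thm:toric}. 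As a consistency check, on coordinates $\pi_U$ is given by the $f_X$ of Theorem~\ref{thm:param}. In this form it agrees with the monomial map of Definition~\ref{dfn: monomial maps}, since for example $f_{\Sigma i}$ for $D$ equals $f_{\Sigma i}f_{X_0}$ for $D'$ by telescoping.
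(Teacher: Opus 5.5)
Your route is essentially the paper's. The finer fan is the star subdivision of the coarser one at $g_{(i,j)}=\varepsilon_i-\varepsilon_j=g_{\Sigma i}+g_j$, which lies in the $2$-cone $\langle\varepsilon_i,-\varepsilon_j\rangle$ (the paper's $\sigma$). The hypersurface statement then comes from $F_{\text{fine}}=F_{\text{coarse}}\cdot F_{i,j}$.

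\textbf{A false step.} You say the first task is to check that $(\Sigma i)$ and $(j)$ are both incompatible with $X_0=(i,j)$ in $D'$. This is false. By Definition~\ref{dfn: comp degree}, $(i,j)$ is compatible with $(j)$ because $j\le j$, and with $(\Sigma i)$ because $i\le i$. What the flip changes is that $(j)$ and $(\Sigma i)$ become incompatible \emph{with each other}: in $D'$ the up step $j$ precedes $\Sigma i$, and $i<j$.

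This corrected statement is what a star subdivision needs. It is also what your next sentences use. The two new cones contain $\{g_{X_0},g_j\}$ and $\{g_{\Sigma i},g_{X_0}\}$. Your own criterion, applied to $Y=(\Sigma i)$, says $(\Sigma i)$ is compatible with $X_0$.

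With this fix, your bookkeeping is right:
\begin{itemize}
\item Flipping the valley only swaps the order of the steps $\Sigma i$ and $j$, so no other pair changes status.
\item Your iff criterion checks directly for steps.
\item For diamonds it needs that $(i,j)$ is a peak of $D'$. A diamond $(k,l)$ with $[k,l]\supsetneq[i,j]$ would be compatible with $X_0$ but not with both $(j)$ and $(\Sigma i)$. Such diamonds lie above $D'$, so they are not in $\I$.
\end{itemize}
You can also skip the bookkeeping, which is closer to Corollary~\ref{cor: star subdivision}. The finer fan refines the coarser simplicial fan and adds one ray $r$ in the relative interior of the cone $\tau=\langle g_{\Sigma i},g_j\rangle$. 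Maximal cones not containing $\tau$ can only be triangulated by their own generators, so they are unchanged. Each maximal $\sigma\supseteq\tau$ must be triangulated by its generators together with $r$, and the only such triangulation that uses $r$ is the star one.

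\textbf{Labels.} The paper sets $|\I_\A|=|\I_{\A'}|+1$, so $\A$ is the larger algebra and $\Sigma_\A$ is the finer fan. Read this way, $\pi\colon\mathcal{X}_{\Sigma_\A}\to\mathcal{X}_{\Sigma_{\A'}}$ with $\pi^{-1}(\mathcal{H}')\subset\mathcal{H}$ is literally what you prove, and no reinterpretation is needed.

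\textbf{Hypersurfaces.} Drop the hedge ``up to a monomial in the exceptional divisor''. A divisibility that holds only after multiplying by a power of the exceptional coordinate would not control $\mathcal{H}$ along the exceptional divisor. The factorization is in fact exact. On the chart of a maximal cone $\gamma$ of $\Sigma_\A$, trivialize by the vertex $m_\gamma(P)$ of $P$ at which $\gamma$ is minimized. Since $\Sigma_\A$ refines both summands' normal fans, $m_\gamma(P_\A)=m_\gamma(P_{\A'})+m_\gamma(\Newt F_{i,j})$, so
\[
y^{-m_\gamma(P_\A)}F_\A=\bigl(y^{-m_\gamma(P_{\A'})}F_{\A'}\bigr)\bigl(y^{-m_\gamma(\Newt F_{i,j})}F_{i,j}\bigr),
\]
with both factors regular. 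Hence $\mathcal{H}=\pi^{-1}(\mathcal{H}')\cup\mathbb{V}(F_{i,j})$ exactly, where $F_{i,j}$ is viewed as a section of the line bundle of $\Newt F_{i,j}$.

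Stated this way, your argument is a sharper version of the paper's. The paper describes $\pi$ as the coordinate projection keeping only the lattice points of $P_{\A'}$. That projection has base points at torus-fixed points whose vertex of $P_\A$ is not in $P_{\A'}$; for $n=2$, the vertex $(2,2)$ of the pentagon is not in the square. Your line-bundle factorization holds globally.
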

\begin{proof}
    By Corollary~\ref{cor: star subdivision} the existence of the blow-up map $\pi$ follows from standard toric geometry. Moreover, by Proposition~\ref{prop: one extra ray} the blow-up locus is the codimension two subvariety $V(\sigma) \subset \mathcal{X}_{\Sigma_{\A'}}$ corresponding to the cone $\sigma \in \Sigma_{\A'}$ spanned by $\varepsilon_i, -\varepsilon_j$. In the coordinates of $X_{P_{\A'}}$, $V(\sigma)$ is the linear space where all coordinates corresponding to the lattice points of the face $F_{\sigma} \subset P_{\A'}$ vanish. In particular the blowup map $\pi$ can be seen as a projection from $\PP^{l-1}$ to $\PP^{l'-1}$, with $l = |P_{\A} \cap \ZZ^d|$ and $l' = |P_{\A'} \cap \ZZ^d|$, where we only keep the coordinates of the lattice points of $P_{\A'}$. Hence, as $F_{\A'}$ divides $F_{\A}$ it follows that $\pi^{-1}(\mathcal{H}') \subset \mathcal{H}$.
\end{proof}

From the proof of Proposition~\ref{prop: toric} one can check that the blow-down map corresponding to $\pi$ coincides with the monomial map in Definition~\ref{dfn: monomial maps}. For this, we can verify the maps coincide in the affine chart corresponding to the two dimensional cone $\sigma$ which we are subdividing into two cones to go from $\Sigma_{\A'}$ to $\Sigma_{\A}$.

\section{Polytopes}\label{sec:poly}
In this section we give a fully combinatorial description of the polytopes which arise in our study of linear Nakayama algebras. Recall that associated to a quotient $\A=\CC A_n/I$ we have the polytope $P_\A$ defined in \eqref{eq: polytope}. We will now study them using the techniques of \cite[Section 3]{sumsimplFS}. 

\medskip

To every subset $G \subset \{0,\ldots,n\}$ we associate a $|G|-1$ dimensional standard simplex
\[\Delta_G = \{(x_0,\ldots,x_{n}) \in \RR^{n+1}: x_i \geq 0, \quad x_i=0 \text{ if } i \notin G, \quad \text{and } x_0+\ldots + x_{n}=1\}. \]
For a collection of subsets $\mathcal{G}$ we consider $P_{\mathcal{G}} = \sum_{G \in \mathcal{G}}\Delta_G$. The focus of \cite{sumsimplFS, sumsimplP} is when $\mathcal{G}$ are particularly nice families of subsets called building sets. In our setup, we work with collections that \emph{are not} building sets but we can still use some key observations from their results. 

\smallskip

Recall that $P_\A$ is the Minkowski sum of simplices corresponding to the Newton polytopes of $F_{i,j}=1+y_i+y_iy_{i+1}+\ldots+y_i\cdots y_j$. We start by making a change of coordinates which allows us to see each summand as a standard simplex. Consider the polynomial ring $\ZZ[x_0,\ldots,x_{n}]$ and the monomial map given by $x_i=y_1y_2\cdots y_i$ for $i=1,\ldots,n$ and $x_0=1$.~Then,
$$ x_{i-1}+\ldots + x_{j+1} = y_{1}\cdots y_{i-1} (1+y_i+y_iy_{i+1}+\ldots+y_i\cdots y_j).$$
This corresponds to a linear transformation in $\ZZ^n$ given by the $n\times n$ upper triangular matrix $T$ with all entries on and above the diagonal being one. For each $i=1,\ldots,n$ we consider the vector $b_i=(1,\ldots,1,0,\ldots,0)$ whose first zero entry is at position $i$. Applying the affine transformation $T-b_i$ to the simplex $\Delta_{[i-1,j]}$ (concretely we apply it to the last $n$ coordinates ignoring $x_{0}$) gives exactly $\Newt(F_{i,j})$. As an example, we draw the two different realizations for the pentagon $P_{\CC A_2}$ in Figure~\ref{fig:two pentagons}. To keep track of the two different realizations we keep the notation $\varepsilon_i$ for the standard basis vectors in $\RR^n$ ($y$ coordinates) and use $\epsilon_i$ when referring to the standard basis vectors in $\RR^{n+1}$ ($x$ coordinates).

\begin{figure}[h!]
    \begin{subfigure}{0.48\textwidth}
    \centering
        \begin{tikzpicture}
            \node (A) at (0,0) {$\bullet$};
            \node (B) at (0,1.3) {$\bullet$};
            \node (C) at (1.3,2.6) {$\bullet$};
            \node (D) at (2.6,2.6) {$\bullet$};
            \node (E) at (2.6,0) {$\bullet$};
            \draw[thick] (A) -- (B) -- (C) -- (D) -- (E) -- (A);
        \end{tikzpicture}
        \caption{}
    \end{subfigure}
    \begin{subfigure}{0.48\textwidth}\centering
        \begin{tikzpicture}
            \node (A) at (0.7071,1.2247) {$\bullet$};
            \node (B) at (1.4142,0.0) {$\bullet$};
            \node (C) at (0.7071,-1.2247) {$\bullet$};
            \node (D) at (-0.7071,-1.2247) {$\bullet$};
            \node (E) at (-2.1213,1.2247) {$\bullet$};
            \draw[thick] (A) -- (B) -- (C) -- (D) -- (E) -- (A);
        \end{tikzpicture}
        \caption{}
    \end{subfigure}
    \caption{A pentagon in $y$ coordinates (A) and $x$ coordinates (B).}
    \label{fig:two pentagons}
\end{figure}
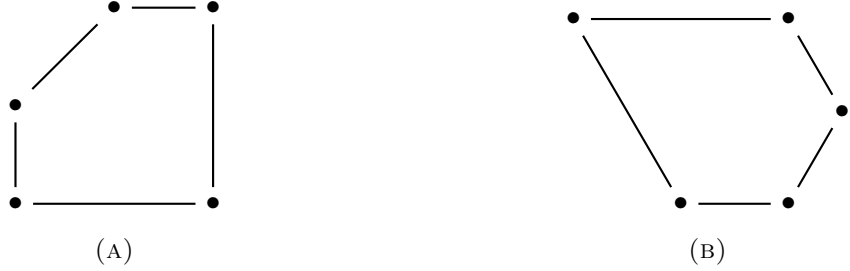

We consider again our triangular grid and add a row at the bottom. We now label the lattice points in the grid with intervals $[i,j]$ for $0\leq i \leq j \leq n$. The rows go bottom to top increasingly by interval size and horizontally left to right in lexicographic order, see Figure \ref{fig: poly grid ex}. Let $D$ be the Dyck path corresponding to the algebra $\A$. Drawing $D$ exactly as before in the grid with the new labelling will help us to describe the collection $\mathcal{G}$ that yields a polytope isomorphic to $P_\A$. Note that the new labelling we have chosen replaces $F_{i,j}$ by the interval $[i-1,j]$. Considering then the collection $\mathcal{G} = \{ G \subseteq \{0,\ldots,n\}: G \text{ is on or below } D \text{ and } |G|>1\}$, we obtain $P_\A\cong P_{\mathcal{G}}$.

\begin{figure}[h!]
    \centering
    \begin{tikzpicture}[every node/.style={inner sep=2pt}]
  \node (M11) at (-2,0) {$[0,1]$};
  \node (M22) at ( 0,0) {$[1,2]$};
  \node (M33) at ( 2,0) {$[2,3]$};
  \node (M44) at ( 4,0) {$[3,4]$};
  \node (M55) at ( 6,0) {$[4,5]$};
  \node (M12) at (-1,1) {$[0,2]$};
  \node (M23) at ( 1,1) {$[1,3]$};
  \node (M34) at ( 3,1) {$[2,4]$};
  \node (M45) at ( 5,1) {$[3,5]$};
  \node (M13) at (0,2) {$[0,3]$};
  \node (M24) at (2,2) {$[1,4]$};
  \node (M35) at (4,2) {$[2,5]$};
  \node (M14) at (1,3) {$[0,4]$};
  \node (M25) at (3,3) {$[1,5]$};
  \node (M15) at (2,4) {$[0,5]$};
 \node (P1) at (-3,-1) {$[0,0]$};
  \node (P2) at (-1,-1) {$[1,1]$};
  \node (P3) at (1,-1) {$[2,2]$};
  \node (P4) at (3,-1) {$[3,3]$};
    \node (P5) at (5,-1) {$[4,4]$};
      \node (P6) at (7,-1) {$[5,5]$};
    \draw[gray] (P1) -- (M11) -- (P2) -- (M22) -- (P3) -- (M33) -- (P4) -- (M44) -- (P5) -- (M55) -- (P6);
    \draw[gray] (M11) -- (M12) -- (M22) -- (M23) -- (M33) -- (M34) -- (M44) -- (M45) -- (M55);
    \draw[gray] (M12) -- (M13) -- (M23) -- (M24) -- (M34) -- (M35) -- (M45);
    \draw[gray] (M13) -- (M14) -- (M24) -- (M25) -- (M35);
    \draw[gray] (M14) -- (M15) -- (M25);
\end{tikzpicture}
    \caption{Grid for $\CC A_5$ labeled by intervals.}
    \label{fig: poly grid ex}
\end{figure}

\begin{rmk}
    Note that the collections $\mathcal{G}$ we obtain from Dyck paths in our grid are subsets of the collection of all intervals $\{ [i,j], 0\leq i \leq j \leq n\}$ characterized by the following two conditions
    \begin{itemize}
    \item $[i-1,i] \in \mathcal{G}$ for all $i=1,\ldots,n$;
    \item if $[i,j] \notin \mathcal{G}$, then no interval containing $[i,j]$ is in $\mathcal{G}$.
    \end{itemize}
    This implies, in particular, that the families $\mathcal{G}$ we consider yield connected simplicial complexes. Hence, by \cite[Remark 3.11]{sumsimplFS}, the polytopes $P_\A\cong P_{\mathcal{G}}$ have dimension $n$.
\end{rmk}

We now use these new coordinates to give a facet description of our polytopes. Recall that minimizing a linear form on a Minkowski sum is equivalent to minimizing it on each summand.  Hence, we obtain the following lemma, implicitly appearing already in \cite[Prop. 3.12]{sumsimplFS}.

\begin{lem}\label{lem: faces}
    Let $\mathcal{G}$ be a family of subsets of $\{0,\ldots,n\}$. For any $F \subseteq \{0,\ldots,n\}$, the linear form $\sum_{i \in F} x_i$ attains its minimum in the face of $P_{\mathcal{G}}$ given by 
    \begin{equation}\label{eq: faces}
        \sum_{G \in \mathcal{G}: G \subseteq F} \Delta_G + \sum_{G \in \mathcal{G}: G\setminus F \neq \emptyset} \Delta_{G\setminus F}. 
    \end{equation}
\end{lem}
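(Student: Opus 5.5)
The proof reduces to two standard facts: minimizing a linear functional on a Minkowski sum can be done summand by summand, and minimizing it on a simplex is easy. For a linear functional $\ell$ and polytopes $Q_1,\dots,Q_m$, write $\mathrm{face}_\ell(Q)$ for the set of minimizers of $\ell$ on $Q$. I would first recall the identity
\[
\mathrm{face}_\ell(Q_1+\cdots+Q_m)=\mathrm{face}_\ell(Q_1)+\cdots+\mathrm{face}_\ell(Q_m).
\]
To check it, take any point $q=q_1+\cdots+q_m$ of the sum. Then $\ell(q)=\sum_k \ell(q_k)\ge \sum_k \min_{Q_k}\ell$, with equality if and only if each $q_k$ minimizes $\ell$ on $Q_k$. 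This gives both inclusions at once. Applied to $P_{\mathcal G}=\sum_{G\in\mathcal G}\Delta_G$ and $\ell=\sum_{i\in F}x_i$, it reduces the lemma to computing $\mathrm{face}_\ell(\Delta_G)$ for a single $G$.

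Next I would compute that face. On $\Delta_G$ every point satisfies $x_i\ge 0$, $x_i=0$ for $i\notin G$, and $\sum_{i\in G}x_i=1$. There are two cases.
\begin{itemize}
\item If $G\subseteq F$, then $\ell$ restricted to $\Delta_G$ equals $\sum_{i\in G}x_i=1$. It is constant, so the whole simplex $\Delta_G$ is the minimizing face.
\item If $G\setminus F\neq\emptyset$, then $\ell\ge 0$ on $\Delta_G$. The value $0$ is attained exactly when $x_i=0$ for all $i\in G\cap F$. Those points form the face $\Delta_{G\setminus F}$; it is nonempty because $G\setminus F\neq\emptyset$.
\end{itemize}
Every $G\in\mathcal G$ falls into exactly one of these cases. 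Summing the faces over all $G$ yields exactly the expression \eqref{eq: faces}, and the minimum value of $\ell$ on $P_{\mathcal G}$ is $\#\{G\in\mathcal G: G\subseteq F\}$.

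I expect no real obstacle. The only points needing care are that $\Delta_{G\setminus F}$ is nonempty whenever it appears, and that the Minkowski-face identity uses all summands simultaneously. Both are handled above. One caveat: the polytope lives in the affine hyperplane $\sum_i x_i=|\mathcal G|$. There, $\ell$ and the complementary functional $-\sum_{i\notin F}x_i$ differ by a constant, so the statement is insensitive to this choice of normalization.
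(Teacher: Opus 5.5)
Your proof is correct and takes the same approach as the paper. The paper justifies the lemma only by remarking that minimizing a linear form on a Minkowski sum amounts to minimizing it on each summand. Your verification of that identity and your case analysis on each simplex $\Delta_G$ (whether $G\subseteq F$ or $G\setminus F\neq\emptyset$) supply exactly the details the paper leaves implicit.
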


We need one more combinatorial ingredient to fully describe the facets of our polytopes. Fix a Dyck path $D$ whose associated collection of intervals is $\mathcal{G}$. Let $D'$ be the Dyck path obtained by shifting $D$ vertically two rows down. That is, for every interval $[i,j]$ lying on $D$ the interval $[i+1,j-1]$ lies in $D'$, see Figure \ref{fig: lowered D ex} (A) for an example. 
\begin{figure}[h!]
\centering
    \begin{subfigure}{0.48\textwidth}
            \centering
    \begin{tikzpicture}[scale=0.65, transform shape]
  \node (M11) at (-2,0) {$[0,1]$};
  \node (M22) at ( 0,0) {$[1,2]$};
  \node (M33) at ( 2,0) {$[2,3]$};
  \node (M44) at ( 4,0) {$[3,4]$};
  \node (M55) at ( 6,0) {$[4,5]$};
  \node (M12) at (-1,1) {$[0,2]$};
  \node (M23) at ( 1,1) {$[1,3]$};
  \node (M34) at ( 3,1) {$[2,4]$};
  \node (M45) at ( 5,1) {$[3,5]$};
  \node (M13) at (0,2) {$[0,3]$};
  \node (M24) at (2,2) {$[1,4]$};
  \node (M35) at (4,2) {$[2,5]$};
  \node (M14) at (1,3) {$[0,4]$};
  \node (M25) at (3,3) {$[1,5]$};
  \node (M15) at (2,4) {$[0,5]$};
 \node (P1) at (-3,-1) {$[0,0]$};
  \node (P2) at (-1,-1) {$[1,1]$};
  \node (P3) at (1,-1) {$[2,2]$};
  \node (P4) at (3,-1) {$[3,3]$};
    \node (P5) at (5,-1) {$[4,4]$};
      \node (P6) at (7,-1) {$[5,5]$};
    \draw[gray] (P1) -- (M11) -- (P2) -- (M22) -- (P3) -- (M33) -- (P4) -- (M44) -- (P5) -- (M55) -- (P6);
    \draw[gray] (M12) -- (M22) -- (M23) -- (M33) -- (M34) -- (M44) -- (M45) -- (M55);
    \draw[gray] (M12) -- (M13) -- (M23) -- (M24) -- (M34) -- (M35) -- (M45);
    \draw[gray] (M13) -- (M14) -- (M24) -- (M25) -- (M35);
    \draw[gray] (M14) -- (M15) -- (M25);
  \draw[thick, blue] (-3,-3) -- (-2,-2) -- (P2) -- (0,-2) -- (P3) -- (M33) -- (M34)--(M44) -- (P5) -- (6,-2) -- (7,-3);
  \draw[thick, red] (P1) -- (M11) -- (M12) -- (M22) -- (M23) -- (M24) -- (M25)--(M35) -- (M45) -- (M55) -- (P6);
\end{tikzpicture}
    \caption{Dyck path \textcolor{red}{$D$} and its lowering \textcolor{blue}{$D'$}.}
    \end{subfigure}
    \begin{subfigure}{0.48\textwidth}
    \centering
    \begin{tikzpicture}[scale=0.65, transform shape]
  \node[blue] (M11) at (-2,0) {$[0,1]$};
  \node (M22) at ( 0,0) {$[1,2]$};
  \node[blue] (M33) at ( 2,0) {$[2,3]$};
  \node[blue] (M44) at ( 4,0) {$[3,4]$};
  \node[blue] (M55) at ( 6,0) {$[4,5]$};
  \node[blue] (M12) at (-1,1) {$[0,2]$};
  \node (M23) at ( 1,1) {$[1,3]$};
  \node[blue] (M34) at ( 3,1) {$[2,4]$};
  \node[blue] (M45) at ( 5,1) {$[3,5]$};
  \node[blue] (M13) at (0,2) {$[0,3]$};
  \node (M24) at (2,2) {$[1,4]$};
  \node[blue] (M35) at (4,2) {$[2,5]$};
  \node[blue] (M14) at (1,3) {$[0,4]$};
  \node[blue] (M25) at (3,3) {$[1,5]$};
  \node (M15) at (2,4) {$[0,5]$};
 \node[blue] (P1) at (-3,-1) {$[0,0]$};
  \node[blue] (P2) at (-1,-1) {$[1,1]$};
  \node[blue] (P3) at (1,-1) {$[2,2]$};
  \node[blue] (P4) at (3,-1) {$[3,3]$};
    \node[blue] (P5) at (5,-1) {$[4,4]$};
      \node[blue] (P6) at (7,-1) {$[5,5]$};
    \draw[gray] (P1) -- (M11) -- (P2) -- (M22) -- (P3) -- (M33) -- (P4) -- (M44) -- (P5) -- (M55) -- (P6);
    \draw[gray] (M12) -- (M22) -- (M23) -- (M33) -- (M34) -- (M44) -- (M45) -- (M55);
    \draw[gray] (M12) -- (M13) -- (M23) -- (M24) -- (M34) -- (M35) -- (M45);
    \draw[gray] (M13) -- (M14) -- (M24) -- (M25) -- (M35);
    \draw[gray] (M14) -- (M15) -- (M25);
  \draw[thick, red] (P1) -- (M11) -- (M12) -- (M22) -- (M23) -- (M24) -- (M25)--(M35) -- (M45) -- (M55) -- (P6);
\end{tikzpicture}
    \caption{Facet defining intervals in \textcolor{blue}{blue}.}
    \end{subfigure}
    \caption{}
    \label{fig: lowered D ex}
\end{figure}

\begin{prop}\label{prop: facet description}
    The facets of $P_{\mathcal{G}}$ are in one-to-one correspondence with the intervals 
    \[
        \mathcal{F}:=\{[0,i] : 0\leq i < n\} \cup \{[i,n] : 0< i \leq n\} \cup \{ \text{intervals on or below } D' \}.
    \]
    Moreover, the facet corresponding to the interval $[i,j]$ has inner normal vector $\epsilon_i + \ldots + \epsilon_j$.
\end{prop}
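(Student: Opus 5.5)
The plan is to reduce facets of $P_{\mathcal{G}}$ to $0/1$ linear forms and then to characterise, via Lemma~\ref{lem: faces}, exactly which subsets $F$ produce faces of dimension $n-1$.

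\emph{Step 1 (only $0/1$ normals).} Each $\Delta_G$ is a generalized permutohedron in the broad sense: its normal fan is coarsened by the braid arrangement. Minkowski sums of such polytopes are again generalized permutohedra, so the normal fan of $P_{\mathcal{G}}$ is a coarsening of the braid fan. The rays of the braid fan, modulo the lineality direction $\epsilon_0+\dots+\epsilon_n$, are the indicator vectors $e_F=\sum_{i\in F}\epsilon_i$ for $\emptyset\neq F\subsetneq\{0,\dots,n\}$. Hence every facet has an inner normal of the form $e_F$, and it is the face minimizing $\sum_{i\in F}x_i$. Distinct proper nonempty $F$ give non-proportional normals modulo the lineality direction, so distinct facets correspond to distinct $F$.

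\emph{Step 2 (dimension of the face for $F$).} By Lemma~\ref{lem: faces}, the face for $F$ is $Q_1+Q_2$. Here $Q_1=\sum_{G\subseteq F}\Delta_G$ lives in the coordinates indexed by $F$, and $Q_2=\sum_{G\not\subseteq F}\Delta_{G\setminus F}$ lives in the coordinates indexed by $F^c$. Since these coordinate sets are complementary, the dimension is $\dim Q_1+\dim Q_2$. I will use the fact underlying \cite[Remark 3.11]{sumsimplFS}: a Minkowski sum of simplices $\Delta_H$ on a ground set $S$ has dimension $|S|-c$, where $c$ is the number of connected components of the hypergraph $\{H\}$ on $S$, with isolated points counted as components. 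Therefore the face has dimension $n+1-c_1-c_2$, and it is a facet if and only if $c_1=c_2=1$. Concretely, the $G\subseteq F$ must connect $F$, and the sets $G\setminus F$ must connect $F^c$.

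\emph{Step 3 (translate using the interval structure of $\mathcal{G}$).} By the Remark above, $\mathcal{G}$ contains all $[i-1,i]$ and is closed under taking subintervals (of size $>1$). For $c_1=1$, $F$ must be an interval $[i,j]$: if $F$ has a gap, no interval $G\subseteq F$ crosses it. Conversely, every interval is connected, either by the adjacent pairs $[k-1,k]\subseteq F$ or trivially when $F=\{i\}$. For $c_2=1$ there are two cases.
\begin{itemize}
\item If $i=0$ or $j=n$, then $F^c$ is a single interval. It is connected by the adjacent pairs $[k-1,k]$, each of which either lies in $F^c$ or meets $F^c$ in a single point. This yields the families $[0,i]$ with $i<n$ and $[i,n]$ with $i>0$.
\item If $0<i\le j<n$, then $F^c=[0,i-1]\cup[j+1,n]$. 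The two pieces are linked exactly when some $G\in\mathcal{G}$ contains both $i-1$ and $j+1$. By subinterval-closure, this holds if and only if $[i-1,j+1]\in\mathcal{G}$, i.e.\ $[i-1,j+1]$ lies on or below $D$. By the definition of the lowered path, this is equivalent to $[i,j]$ lying on or below $D'$. This includes singletons $[i,i]$, which occur when $[i-1,i+1]\in\mathcal{G}$.
\end{itemize}
Combining the cases gives exactly the set $\mathcal{F}$, with normal $\epsilon_i+\dots+\epsilon_j$.

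The main obstacle is Step 1: making rigorous that no facet has a non-$0/1$ normal. I would handle it by citing the standard fact that simplices $\Delta_G$ are deformations of the permutohedron, and that this class is closed under Minkowski sum. The dimension formula in Step 2 also needs care with isolated vertices, which occur, for example, when $F=\{i\}$ or when a leftover $G\setminus F$ is a single point.
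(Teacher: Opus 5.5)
Your proposal is correct and follows essentially the same route as the paper. Both arguments restrict to $0/1$ normals via the permutohedron/braid-fan fact, split the face from Lemma~\ref{lem: faces} into a part supported on $F$ and a part supported on $F^c$ whose dimensions are governed by connectivity, and reduce connectivity of $F^c=[0,i-1]\cup[j+1,n]$ to $[i-1,j+1]\in\mathcal{G}$, i.e.\ $[i,j]$ on or below $D'$. Your explicit count of isolated points as components and your treatment of the singletons $[i,i]$ are slightly more careful than the paper, whose case analysis only states $0<i<j<n$.
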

\begin{proof}
    We observe that the points of $P_{\mathcal{G}}$ are determined by 
    \begin{equation}\label{eq: facet ineqs}
        \sum_{i \in F} x_i \geq |\{G \in \mathcal{G}: G \subseteq F\}| \text{ for } G \subseteq \{0,\ldots,n\} \quad \text{and} \quad x_0+\ldots+x_{n}=|\mathcal{G}|.
    \end{equation}
    As $P_{\mathcal{G}}$ is a Minkowski summand of the $n$-dimensional permutahedron, then its facets are given by a subsets of the previous inequalities \eqref{eq: facet ineqs}. We will use Lemma~\ref{lem: faces} to determine which $G \subset \{0,\ldots,n\}$ yield factes, i.e. faces of codimension one. The left sum in \eqref{eq: faces} has dimension at most $|G|-1$. We get equality if $\{G \in \mathcal{G}: G \subseteq F\}$ represents a connected simplicial complex. This is the case if $G$ is an interval. The right sum in \eqref{eq: faces} has dimension at most $n-|F|$, with equality holding if and only if
    \begin{enumerate}
        \item $\{G\setminus F: G \in \mathcal{G} \text{ and } G \setminus F \neq \emptyset\}$ is a connected simplicial complex;
        \item and, there are $G_1,\ldots, G_r \in \mathcal{G}$ such that $\{0,\ldots,n\}\setminus F \subset G_1\cup\ldots \cup G_r$.
    \end{enumerate}
    Condition (2) is always satisfied for our collections since $\bigcup \mathcal{G} = \{0,\ldots,n\}$. If $F=[0,i]$ or $[i,n]$ then (1) is satisfied because their complement is an interval. If $F=[i,j]$ with $0<i<j<n$, (1) is satisfied if and only if $\{i-1,j+1\} \subseteq G$ for some $G \in \mathcal{G}$. As $[i-1,j+1]$ is at the bottom of the diamond below $[i,j]$ in our grid, this is the same as saying that $G$ is an interval below~$D'$.
\end{proof}

We can now prove Proposition~\ref{prop: rays of g vector fan} by applying a linear map to the normal vectors appearing in Proposition~\ref{prop: facet description}.

\begin{proof}[Proof of Proposition~\ref{prop: rays of g vector fan}]
    Note that the affine transformation we apply to the polytope $P_\A$ to obtain $P_{\mathcal{G}}$ turns into a linear transformation at the level of normal fans. Concretely we get a map between $\RR^{n+1}/(1,\ldots,1) \to \RR^n$ which in the standard basis (ignoring $\epsilon_{0}$ in $\RR^{n+1}$) is represented by the following matrix
    \[
       T^{-t} = \begin{bmatrix}
            1 & 0 & 0 & \ldots & 0 & 0 \\
            -1 & 1 & 0& \ldots & 0 & 0 \\
            0 & -1 & 1 & \ldots & 0 & 0 \\
            \vdots & \vdots & \vdots & \ddots & \vdots & \vdots \\
            0 & 0 & 0 & \ldots & 1 & 0\\
            0 & 0 & 0 & \ldots & -1 & 1
        \end{bmatrix}.
    \]
    Under this transformation the possible normal vectors of $P_{\mathcal{G}}$ map as follows.
    \begin{align*}
    &\text{For $i>0$:} &\epsilon_i + \ldots + \epsilon_{n} \mapsto  \varepsilon_{i}\\
        &\text{for $i>0$ and $j<n$:} &\epsilon_i + \ldots + \epsilon_j \mapsto  \varepsilon_i-\varepsilon_{j+1}\\
        &\text{for $j<n$:}&\epsilon_0 + \ldots + \epsilon_j \mapsto  -\varepsilon_{j+1}.
    \end{align*}
    By Proposition~\ref{prop: facet description} we have that the normal vectors of $P_{\mathcal{G}}$ after applying the affine transformation are $\sum_{i \in F} \epsilon_i$ where $F$ is $[0,i], [i,n]$ for some $i$ or it is in the lowering of $D$, the Dyck path of $\A$. In coordinates, this translates exactly to the $g$-vectors in \eqref{eq: gvector}.
\end{proof}

\begin{lem}\label{lem: compatibilityFacets}
    We say that $F,F' \in \mathcal{F}$ are \emph{$\mathcal{G}$-compatible} if for any $G \in \mathcal{G}$ such that $G \subset F\cup F'$ it holds that $G\subset F$ or $G\subset F'$. Two facets of $P_{\mathcal{G}}$, labelled by $F, F' \in \mathcal{F}$, intersect if and only if $F$ and $F'$ are $\mathcal{G}$-compatible.
\end{lem}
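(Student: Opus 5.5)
Let me write $P_F$ for the facet of $P_{\mathcal{G}}$ minimizing $\ell_F=\sum_{i\in F}x_i$; by Lemma~\ref{lem: faces} it is the Minkowski sum \eqref{eq: faces}. The plan is to use the standard fact that two faces of a Minkowski sum intersect if and only if the linear forms can be minimized simultaneously. Concretely, $P_F\cap P_{F'}\neq\emptyset$ holds if and only if $\ell_F+\ell_{F'}$ attains its minimum on $P_F\cap P_{F'}$. Equivalently, for every summand $\Delta_G$ the face of $\Delta_G$ minimizing $\ell_F$ meets the face of $\Delta_G$ minimizing $\ell_{F'}$. This reduces the statement to a condition checked simplex by simplex.

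For a single simplex $\Delta_G$ the face minimizing $\ell_F$ is $\Delta_G$ itself if $G\subseteq F$, and $\Delta_{G\setminus F}$ otherwise. I would then split into cases.
\begin{itemize}
\item If $G\subseteq F$ or $G\subseteq F'$, one of the two faces is all of $\Delta_G$, so the two faces meet.
\item If $G\not\subseteq F$ and $G\not\subseteq F'$, the two faces are $\Delta_{G\setminus F}$ and $\Delta_{G\setminus F'}$. They meet if and only if $(G\setminus F)\cap(G\setminus F')=G\setminus(F\cup F')\neq\emptyset$, i.e. if and only if $G\not\subseteq F\cup F'$.
\end{itemize}
So all summands are compatible exactly when there is no $G\in\mathcal{G}$ with $G\subseteq F\cup F'$, $G\not\subseteq F$ and $G\not\subseteq F'$. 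This is precisely $\mathcal{G}$-compatibility.

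The step needing the most care is justifying the reduction in the first paragraph. Write $P=\sum_G\Delta_G$ and let $\ell=\ell_F+\ell_{F'}$. Since $\ell_F$ and $\ell_{F'}$ are both minimized at any point of $P_F\cap P_{F'}$, the face of $P$ minimizing $\ell$ is exactly $P_F\cap P_{F'}$ when that intersection is nonempty. The face $P^\ell$ minimizing $\ell$ is always $\sum_G\Delta_G^\ell$. On each summand, $\min_{\Delta_G}\ell\ge\min\ell_F+\min\ell_{F'}$, with equality exactly when the two minimizing faces of $\Delta_G$ meet. Summing over $G$, $\min_P\ell=\min_P\ell_F+\min_P\ell_{F'}$ holds if and only if equality holds summandwise. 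The left-hand equality is in turn equivalent to $P_F\cap P_{F'}\neq\emptyset$.

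Finally I would note that $F\neq F'$ label distinct facets by Proposition~\ref{prop: facet description}. The argument never uses that $F,F'\in\mathcal{F}$ beyond their labelling genuine facets, so it also covers the cases of the intervals $[0,i]$ and $[i,n]$.
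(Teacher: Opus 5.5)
Your proof is correct and follows the same route as the paper. Both arguments reduce the question to comparing, summand by summand, the faces of $\Delta_G$ minimizing $\ell_F$ and $\ell_{F'}$, and the only obstruction is a $G\in\mathcal{G}$ with $G\subseteq F\cup F'$ but $G\not\subseteq F$ and $G\not\subseteq F'$. Your $\ell_F+\ell_{F'}$ equality-case argument is a useful addition: it rigorously justifies the step the paper only asserts, namely that disjoint minimizing faces in a single summand force the two facets of the Minkowski sum to be disjoint.
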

\begin{proof}
    Assume that $F$ and $F'$ are $\mathcal{G}$-compatible. Using \eqref{eq: faces} we have that the intersection of their facets is given by
    \[
        \sum_{G \in \mathcal{G}: G \subseteq F\cap F'} \Delta_G + \sum_{G \in \mathcal{G}: G\setminus (F\cup F') \neq \emptyset} \Delta_{G\setminus (F\cup F')} + \sum_{G \in \mathcal{G}: G\setminus F \neq \emptyset, G\setminus F' = \emptyset}  \Delta_{G\setminus F}+ \sum_{G \in \mathcal{G}: G\setminus F' \neq \emptyset, G\setminus F = \emptyset}  \Delta_{G\setminus F'}. 
    \]
    The $\mathcal{G}$-compatibility condition guarantees that this is a face of $P_{\mathcal{G}}$ as we are taking a Minkowski summand of $\Delta_G$ for each $G \in \mathcal{G}$. If $F$ and $F'$ are not $\mathcal{G}$-compatible then by definition there exists a $G \in \mathcal{G}$ such that the corresponding Minkowski summands appearing in the facets corresponding to $F$ and $F'$ are disjoint and hence cannot intersect.
\end{proof}

\begin{rmk}
An alternative way of describing the varieties from Section \ref{sec:3} is using the polytopes $P_{\mathcal{G}}$. We can label each $u$-variable by a facet of $P_{\mathcal{G}}$ and say two facets are \emph{compatible} if they intersect. Lemma~\ref{lem: compatibilityFacets} guarantees that the compatibility condition coincides with Definition~\ref{dfn: comp degree}, see Example~\ref{ex: compatibility} for a concrete example.

Given any polytope $P$ one can use the facets to describe a variety $U_P$ as the vanishing of a square system of equations as in \eqref{eq: ueqs}. However, the fact that $U_P$ is an irreducible variety of dimension $\dim(P)$ with a natural nonnegative part which is a``curvy'' version of $P$ is quite special. For example, the $u$-coordinates appearing in our configuration spaces for linear Nakayama algebras can de parameterized as very particular products of cross-ratios of points in $\PP^1$. The fact that $P_\A$ are somehow ``polytopes with cross-ratios'' yields the nice structure of $U_\A$. 
\end{rmk}

\begin{ex}\label{ex: compatibility}
   The grid in Figure~\ref{fig: lowered D ex} (A) corresponds to the algebra $\A = \CC A_5 / \langle \alpha_2 \alpha_1\rangle$ from Example~\ref{ex: A5 urels}. The facets corresponding to the labels $(2), (2,4) \in \I_\A$ in terms of intervals are respectively $[0,1]$ and $[2,3]$. The $\mathcal{G}$-incompatible intervals as in Lemma~\ref{lem: compatibilityFacets} are $[1,5],[2,2],[2,3],[2,4]$, $[2,5]$ for $[0,1]$ and $[0,1],[1,1],[1,2],[2,4],[3,4],[3,5],[4,5]$ for $[2,3]$. Translating back to the labels of $\I_\A$ yields exactly the $u$-relations in Figure~\ref{fig:urel2} and Figure~\ref{fig:urel24}.
\end{ex}

\begin{prop}\label{prop:simple}
    $P_{\mathcal{G}}$ is a simple polytope.
\end{prop}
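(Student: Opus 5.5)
The plan is to show that every vertex of $P_{\mathcal{G}}$ lies on exactly $n$ facets. Equivalently, the normal fan $\Sigma_\A$ is simplicial, so every maximal cone is spanned by exactly $n$ rays. I will take the facet description of Proposition~\ref{prop: facet description} and the pairwise intersection criterion of Lemma~\ref{lem: compatibilityFacets} as given.

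\textbf{Step 1: vertices as minimizers.} Fix a vertex $v$, and let $\mathcal{F}_v\subseteq\mathcal{F}$ be the set of intervals whose facets contain $v$. By \eqref{eq: faces}, $v$ is the unique minimizer of a generic linear form $\sum_i c_i x_i$. Because minimization splits over Minkowski summands, $v$ is the sum of the vertices $\epsilon_{m(G)}$, where $m(G)$ is the index in $G$ minimizing $c$. So $v$ is encoded by the function $G\mapsto m(G)$ on $\mathcal{G}$.

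\textbf{Step 2: the tight facets form a laminar family.} An interval $F\in\mathcal{F}$ is tight at $v$ exactly when every $G\in\mathcal{G}$ with $G\cap F\neq\emptyset$ and $G\not\subseteq F$ has $m(G)\notin F$. By Lemma~\ref{lem: compatibilityFacets}, the intervals in $\mathcal{F}_v$ are pairwise $\mathcal{G}$-compatible. I first want to show that, for intervals lying on or below $D'$, this forces them to be nested or separated by a gap. Two overlapping intervals $[i,j]$ and $[k,l]$ with $i<k\le j<l$ have their union $[i,l]$ in $\mathcal{G}$. To see this, note that $[k-1,j+1]$ lies below $D$ and the region under a Dyck path is closed under the relevant diamond moves. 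This union is contained in neither interval, so such a pair is incompatible. A similar check handles adjacent intervals $[i,j]$ and $[j+1,l]$: the interval $[j,j+1]$ always lies in $\mathcal{G}$, so the two are incompatible. The boundary intervals $[0,i]$ and $[i,n]$ behave the same way. Hence $\mathcal{F}_v$ is a laminar family of intervals in which disjoint members are separated by at least one point.

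\textbf{Step 3: the count is exactly $n$.} I expect this to be the main obstacle. The upper bound comes from laminarity. Order $\mathcal{F}_v$ by inclusion and give each interval $F$ the "new point" it covers beyond its maximal sub-intervals. Requiring both $[0,\cdot]$ and $[\cdot,n]$ types to be treated consistently should inject $\mathcal{F}_v$ into a set of size $n$. Concretely, I would identify $\{0,\dots,n\}$ modulo the gap structure and show that a laminar gapped family of proper subintervals of $[0,n]$ has at most $n$ members.

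Simplicity then follows, because a vertex of an $n$-dimensional polytope always lies on at least $n$ facets. So $|\mathcal{F}_v|\le n$ gives equality.

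If the combinatorial bound proves messy, I would fall back on two alternative routes. The first is to use Lemma~\ref{lem: param and gvectors}: by duality, the $g$-vectors of a maximal cone form a basis, which the unimodularity statement cited in the proof of Theorem~\ref{thm:toric} provides. The second is to induct along covering relations, where Proposition~\ref{prop: toric} realizes each step as a stellar subdivision of a two-dimensional cone, and stellar subdivision preserves simpliciality.
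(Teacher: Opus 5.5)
\textbf{There is a genuine gap in your main argument.} Your strategy is sound: bound $|\mathcal{F}_v|\le n$ using pairwise compatibility, then compare with $\dim P_{\mathcal G}=n$. But Step~2 is false as stated, and Step~3 is never carried out.

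The boundary intervals do \emph{not} ``behave the same way.'' Take a pair $[0,i]$, $[k,n]$ with $1\le k\le i\le n-1$. These intervals overlap, yet any witness of incompatibility must contain $[k-1,i+1]$, and that interval need not lie in $\mathcal G$. These are exactly the pairs (up step $i+1$, down step $\Sigma k$) whose compatibility in Definition~\ref{dfn: comp degree} depends on $D$.

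For a concrete failure, take $n=2$ and $\mathcal G=\{[0,1],[1,2]\}$. Then $P_{\mathcal G}$ is a square, and its vertex $(1,0,1)$ lies on the facets labelled $[0,1]$ and $[1,2]$, which are neither nested nor disjoint. So $\mathcal{F}_v$ is not laminar, and your count for laminar gapped families does not apply; that count is in any case only asserted (``should inject''). Handling these crossing pairs is exactly where the Dyck path has to enter the argument, and that is the missing idea.

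There is also a smaller, repairable slip. The union of two overlapping middle intervals need not lie in $\mathcal G$: if $D$ has peaks $[0,4]$ and $[2,6]$, the facet intervals $[1,3]$ and $[3,5]$ have union $[1,5]\notin\mathcal G$. However, the interval $[k-1,j+1]\in\mathcal G$ that you mention is itself a witness, so that step survives.

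\textbf{One way to close the gap along your lines.}
\begin{enumerate}
\item Send a facet interval $[a,b]$ to the edge $\{a,b+1\}$ on the vertex set $\{0,\ldots,n\}$, reading $n+1$ as $0$. This is just the $g$-vector viewed as a column of an incidence matrix.
\item Compatible intervals give non-crossing chords with no repeated edge, since $[0,m-1]$ and $[m,n]$ are incompatible. So a cycle $v_1<\cdots<v_r$ (with $v_1=0$ if $0$ occurs) has edges $\{v_s,v_{s+1}\}$ and $\{v_r,v_1\}$.
\item Any $v_s$ with $1<s<r$ and $v_{s-1}\neq 0$ meets the intervals $[v_{s-1},v_s-1]$ and $[v_s,v_{s+1}-1]$. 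These are adjacent, hence incompatible. Such an $s$ exists unless the cycle is a triangle $0<p<q$.
\item In the triangle case, the intervals are forced to be $[p,n]$, $[p,q-1]$ and $[0,q-1]$. But $[p-1,q]\in\mathcal G$, because $[p,q-1]$ lies on or below $D'$, and this makes $[0,q-1]$ and $[p,n]$ incompatible.
\end{enumerate}
Hence compatible families are forests on $n+1$ vertices and have at most $n$ members, which is the bound you need.

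\textbf{Your fallbacks.} The second is circular. Proposition~\ref{prop: toric} rests on Corollary~\ref{cor: star subdivision} and Proposition~\ref{prop: one extra ray}, and both of these invoke the statement you are proving.

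The first fallback works only if Lemma~\ref{lem: param and gvectors} does the work, rather than the unimodularity citation; relying on the citation just imports the conclusion. The argument runs as follows.
\begin{itemize}
\item Each $\trop(f_X)$ is linear on every cone of $\Sigma_\A$, being a signed sum of the $\trop(F_{i,j})$ for modules of $\A$ plus a linear term.
\item So on a maximal cone, the lemma supplies linear functionals dual to its rays, which are $g$-vectors by Proposition~\ref{prop: rays of g vector fan}.
\item These rays are therefore linearly independent, hence at most $n$ of them, and the cone is simplicial.
\end{itemize}
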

\begin{proof}
    The same relabelling argument from \cite[Theorem 3.14]{sumsimplFS} works for our collections $\mathcal{G}$ even if they are not building sets. The key point is that (after relabelling) the facets adjacent to the vertex $v$ are $G_i=[i,n]$ which are facet defining for all $i=1,\ldots,n$.
\end{proof}

We now want to understand combinatorially what happens to the polytopes $P_{\mathcal{G}}$ as we let the family $\mathcal{G}$ increase one by one. Let $\mathcal{G}'$ be such that $\mathcal{G}=\mathcal{G}'\cup \{[i,j]\}$ and denote $D$ and $D'$ for the Dyck paths correspondign to $\mathcal{G}$ and $\mathcal{G}'$ respectively. Then $D$ is obtained by making exactly one valley of $D'$ a peak. We denote $\Sigma_{\mathcal{G}'}$ and $\Sigma_{\mathcal{G}}$ for the normal fans of $P_{\mathcal{G}'}$ and $P_{\mathcal{G}}$ respectively. 

\begin{prop}\label{prop: one extra ray}
    $\Sigma_{\mathcal{G}}$ has exactly one more ray than $\Sigma_{\mathcal{G}'}$, namely $r:=\epsilon_{i+1}+\ldots+\epsilon_{j-1}$. Moreover, $r$ is in a unique two dimensional cone of $\Sigma_{\mathcal{G}'}$.
\end{prop}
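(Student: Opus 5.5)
The plan is to read off both fans from Proposition~\ref{prop: facet description}. Since rays of the inner normal fan are exactly inner normals of facets, the rays of $\Sigma_{\mathcal{G}}$ correspond to $\mathcal{F}(\mathcal{G})$ and those of $\Sigma_{\mathcal{G}'}$ to $\mathcal{F}(\mathcal{G}')$. The intervals $[0,k]$ and $[k,n]$ are common to both sets. The only difference comes from the lowered paths: $D$ is obtained from the lowered path of $D'$ (in the notation of the proposition) by turning one valley into a peak. After lowering by two rows, the new peak $[i,j]$ becomes $[i+1,j-1]$, and exactly this one interval is added to the region on or below the lowered path. Hence $\mathcal{F}(\mathcal{G})=\mathcal{F}(\mathcal{G}')\cup\{[i+1,j-1]\}$, so the new ray is $r=\epsilon_{i+1}+\dots+\epsilon_{j-1}$.

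Two points need checking here. First, $[i+1,j-1]$ was not already facet-defining for $\mathcal{G}'$. This holds because $[i,j]\notin\mathcal{G}'$, and for $0<i+1\le j-1<n$ the facet condition requires some $G\in\mathcal{G}'$ containing $\{i,j\}$; by the downward-closedness remark, such a $G$ would force $[i,j]\in\mathcal{G}'$. Second, the degenerate cases $i=0$ or $j=n$ need separate attention. In those cases the would-be new interval is of the form $[1,j-1]$ or $[i+1,n-1]$, and I would check against the definition of $\mathcal{F}$ that it is still genuinely new and not of the form $[0,k]$ or $[k,n]$.

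For the second claim, the plan is to locate $r$ in $\Sigma_{\mathcal{G}'}$ using Lemma~\ref{lem: faces}. The face of $P_{\mathcal{G}'}$ minimizing $\langle r,\cdot\rangle$ is given by \eqref{eq: faces} with $F=[i+1,j-1]$. The first sum, $\sum_{G\subseteq F}\Delta_G$, is full-dimensional in the coordinates of $F$. In the second sum, the pieces $G\setminus F$ split into those lying in $[0,i]$ and those lying in $[j,n]$, and no $G\in\mathcal{G}'$ meets both sides, since $[i,j]\notin\mathcal{G}'$. So this face is a product of three pieces of dimensions $|F|-1$, $i$ and $n-j$, giving total dimension $n-2$. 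Hence $r$ lies in the relative interior of a $2$-dimensional cone of $\Sigma_{\mathcal{G}'}$, and the cone containing a vector in its relative interior is unique.

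To identify that cone explicitly, I would decompose $r$ as a combination of existing facet normals:
\[
r=(\epsilon_{0}+\dots+\epsilon_{j-1})-(\epsilon_0+\dots+\epsilon_i),
\]
or equivalently, modulo $(1,\dots,1)$, as $[0,j-1]+[i+1,n]$. I would then check that the facets $[0,j-1]$ and $[i+1,n]$ are $\mathcal{G}'$-compatible via Lemma~\ref{lem: compatibilityFacets}. This amounts to showing that no $G\in\mathcal{G}'$ contained in their union fails to lie in one of them. A $G$ lying in neither would contain both $i$ and $j$, which is excluded. So these two facets intersect in a codimension-two face, and $r$ is in the relative interior of their cone. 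This matches the paper's later remark that $\sigma$ is spanned by $\varepsilon_i,-\varepsilon_j$ after applying $T^{-t}$.

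I expect the main obstacle to be the index bookkeeping at the boundary cases $i=0$ or $j=n$, including the conversion between $\epsilon$ and $\varepsilon$ coordinates. The dimension count of the minimizing face in those cases also needs care.
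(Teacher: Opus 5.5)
Your proposal is correct. The first claim is handled as in the paper, by reading off the facets from Proposition~\ref{prop: facet description}. Your check that $[i+1,j-1]$ was not already facet-defining for $\mathcal{G}'$ makes explicit what the paper leaves implicit.

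Your sentence about lowered paths conflates the paper's two uses of $D'$. A cleaner formulation: for $0<a\le b<n$, the interval $[a,b]$ is facet-defining iff $[a-1,b+1]\in\mathcal{G}$. Since $\mathcal{G}$ and $\mathcal{G}'$ differ only in $[i,j]$, only $[a,b]=[i+1,j-1]$ changes status. The boundary cases you worry about are vacuous. Since $i+1\ge 1$ and $j-1\le n-1$, the interval $[i+1,j-1]$ is never of the form $[0,k]$ or $[k,n]$. In your dimension count, the block $[0,i]$ or $[j,n]$ simply degenerates to a point.

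For the second claim your route genuinely differs from the paper's. The paper writes $r$ in two ways as a sum of rays of $\Sigma_{\mathcal{G}'}$. It then invokes Lemma~\ref{lem: compatibilityFacets} together with Proposition~\ref{prop:simple} to say that $[0,j-1],[i+1,n]$ span a cone while $[i+1,k],[k+1,j-1]$ do not. You instead compute the $r$-minimizing face of $P_{\mathcal{G}'}$ directly with Lemma~\ref{lem: faces}. Since no $G\in\mathcal{G}'$ contains $[i,j]$, that face is a product over the blocks $[0,i]$, $[i+1,j-1]$, $[j,n]$, and it has dimension $n-2$. This places $r$ in the relative interior of a $2$-dimensional cone. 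Uniqueness then comes for free, because in a fan every vector lies in the relative interior of exactly one cone; it does not depend on ruling out alternative decompositions. Your argument also avoids Proposition~\ref{prop:simple}, whose proof in the paper is only sketched.

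Your identification of the cone agrees with the paper's and likewise needs no simplicity. Once the facets for $[0,j-1]$ and $[i+1,n]$ meet, the face minimizing the sum of their normals is exactly their intersection. Your dimension count therefore shows that this intersection has codimension two, and its normal cone is spanned by the two normals. The paper's version is shorter and stays in the compatibility language reused in Corollary~\ref{cor: star subdivision}; yours is more self-contained.
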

\begin{proof}
    By Proposition~\ref{prop: facet description}, $P_{\mathcal{G}}$ has one more facet than $P_{\mathcal{G}'}$. This extra facet is labeled by $[i+1,j-1]$ which corresponds to the normal ray $r=\epsilon_{i+1}+\ldots+\epsilon_{j-1}$. Note that $r$ can be written as sums of rays of $\Sigma_{\mathcal{G}'}$ in the following ways
    \begin{align*}
        &r = (\epsilon_{i+1}+\ldots+\epsilon_k)+(\epsilon_{k+1}+\ldots+\epsilon_{j-1}) &&\text{for some } k,\\
        &r = (\epsilon_{0}+\ldots+\epsilon_{j-1})+(\epsilon_{i+1}+\ldots+\epsilon_{n}).
    \end{align*}
    By Lemma~\ref{lem: compatibilityFacets} and Proposition~\ref{prop:simple} the rays labelled by $[i+1,k]$ and $[k+1,j-1]$ do not span a cone in $\Sigma_{\mathcal{G}'}$ whilst the rays corresponding to $[0,j-1]$ and $[i+1,n]$ do. 
\end{proof}

\begin{cor}\label{cor: star subdivision}
    $\Sigma_{\mathcal{G}}$ is the star subsivision of $\Sigma_{\mathcal{G}'}$ with respect to the ray $r$.
\end{cor}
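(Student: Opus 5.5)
To show $\Sigma_{\mathcal{G}}$ is the star subdivision of $\Sigma_{\mathcal{G}'}$ at $r$, I would compare the maximal cones of the two fans directly. Both fans are simplicial by Proposition~\ref{prop:simple}. Each fan is the normal fan of a simple polytope, so its cones are exactly the cones spanned by sets of rays whose facets intersect. By Lemma~\ref{lem: compatibilityFacets}, these are the sets of rays whose labels are pairwise $\mathcal{G}$-compatible (resp. $\mathcal{G}'$-compatible). Since the polytope is simple, a set of facets has a common intersection if and only if they pairwise intersect; this flag property I would justify from the Minkowski-sum face formula \eqref{eq: faces}, exactly as in the proof of Lemma~\ref{lem: compatibilityFacets}. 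So the task is purely combinatorial: compare $\mathcal{G}$-compatibility with $\mathcal{G}'$-compatibility on $\mathcal{F}$.

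Write $\mathcal{F}' = \mathcal{F}\setminus\{[i+1,j-1]\}$ for the labels of $\Sigma_{\mathcal{G}'}$, using Proposition~\ref{prop: one extra ray}. Let $\rho_1$ and $\rho_2$ be the rays labelled $[0,j-1]$ and $[i+1,n]$; by that proposition they span the unique $2$-cone $\tau$ containing $r$ in its relative interior. The star subdivision removes every cone $\sigma\supseteq\tau$. For each such $\sigma$ it adds the cones $\mathrm{cone}(r, \sigma\setminus\rho_1)$ and $\mathrm{cone}(r,\sigma\setminus\rho_2)$ together with their faces; all other cones are kept. I would verify three facts for $F,F'\in\mathcal{F}'$.

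\begin{enumerate}
\item $F,F'$ are $\mathcal{G}$-compatible iff they are $\mathcal{G}'$-compatible, unless $\{F,F'\}=\{[0,j-1],[i+1,n]\}$. The new set $G=[i,j]$ is the only possible witness of incompatibility, and $[i,j]\subseteq F\cup F'$ with $[i,j]\not\subseteq F,F'$ forces $F\cup F'\supseteq[i,j]$. For intervals in $\mathcal{F}'$, this should force $F,F'$ to be exactly $\rho_1,\rho_2$. If they were, say, $[i,k],[k+1,j]$-type intervals lying below the lowered path $D'$, their union would give $[i,j]$ while each lies strictly inside a smaller interval, contradicting that $[i,j]$ was a valley of the old path. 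So I need a careful case check on which $\mathcal{F}'$-intervals can cover $[i,j]$.
\item $\rho_1,\rho_2$ are $\mathcal{G}'$-compatible but not $\mathcal{G}$-compatible, which is immediate: their union is $\{0,\dots,n\}\supseteq[i,j]$.
\item $[i+1,j-1]$ is $\mathcal{G}$-compatible with $F\in\mathcal{F}'$ iff $F$ is compatible with at least one of $\rho_1,\rho_2$ in a way that lets a cone containing $\tau$ exist. Concretely, I would show $[i+1,j-1]$ is incompatible exactly with the intervals $F$ that are incompatible with both $\rho_1$ and $\rho_2$, or that are themselves $\rho_1$ or $\rho_2$ paired differently. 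I expect this to be the main obstacle, and I would attack it by listing the interval types in $\mathcal{F}$ (prefix, suffix, interior below $D'$) and checking overlap conditions against $[i+1,j-1]$.
\end{enumerate}

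Given (1)–(3), the maximal cones of $\Sigma_{\mathcal{G}}$ match those of the star subdivision. Old maximal cones not containing both $\rho_1,\rho_2$ survive by (1). Cones containing $\tau$ are destroyed by (2). The new cones containing $r$ are exactly $r$ together with a compatible set extending one of $\rho_1,\rho_2$, by (3). As a cross-check, both fans are complete of dimension $n$, so once the cones containing $r$ are identified, the rest follows by completeness, and the count of maximal cones matches the $f$-vector change under a single blowup.
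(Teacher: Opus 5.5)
Your plan replaces the paper's structural argument with a direct comparison of $\mathcal{G}'$- and $\mathcal{G}$-compatibility. It has a genuine gap exactly at the step you flag as the main obstacle: item (3) is not only unproved, its concrete form is false. Take $n=2$, $\mathcal{G}'=\{[0,1],[1,2]\}$ (the square) and $\mathcal{G}=\mathcal{G}'\cup\{[0,2]\}$ (the pentagon), so $r=[1,1]$, $\rho_1=[0,1]$, $\rho_2=[1,2]$. The label $[0,0]$ is compatible with $\rho_1$ and incompatible with $\rho_2$. It is also incompatible with $[1,1]$, with witness $[0,1]$. Your rule, that $r$ is incompatible exactly with the labels incompatible with both $\rho_1$ and $\rho_2$, would therefore produce a nonexistent cone spanned by $[0,0]$ and $[1,1]$. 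A star subdivision instead requires that $F\in\mathcal{F}'$ be compatible with $[i+1,j-1]$ iff $\{F,\rho_1,\rho_2\}$ spans a cone of $\Sigma_{\mathcal{G}'}$, i.e.\ iff $F\in\{\rho_1,\rho_2\}$ or $F$ is compatible with both.

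Your reduction to pairwise compatibility also uses a flag property that does not follow from simplicity: a simplex is simple but not flag. Extending the two-facet argument of Lemma~\ref{lem: compatibilityFacets} to $k$ facets needs a Helly-type statement for the sets $G\setminus F_a$, and these are disconnected when $F_a$ sits strictly inside $G$. Your item (1), by contrast, is correct. If the two labels are $[a,b]$ and $[c,d]$ with $a<c\le b+1\le d$, then $[c-1,b+1]$ lies in $[i,j]$ and in their union but in neither label. Hence it is in $\mathcal{G}'$ unless it equals $[i,j]$, and the facet conditions defining $\mathcal{F}'$ allow equality only for $\{\rho_1,\rho_2\}$.

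The missing idea, which is what the paper's proof rests on and which makes items (1) and (3) unnecessary, is that $\Sigma_{\mathcal{G}}$ refines $\Sigma_{\mathcal{G}'}$. The cleanest reason is that $P_{\mathcal{G}}=P_{\mathcal{G}'}+\Delta_{[i,j]}$ has $P_{\mathcal{G}'}$ as a Minkowski summand. Combined with Proposition~\ref{prop: one extra ray} and simpliciality, this forces the star subdivision, in two cases.
A cone $\sigma$ of $\Sigma_{\mathcal{G}'}$ with $\tau\not\subseteq\sigma$ does not contain $r$, since $r$ lies in the relative interior of $\tau$. So $\sigma$ can only be covered by cones on its own linearly independent generators, and it is unchanged.
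A cone $\sigma\supseteq\tau$ must be subdivided using its generators together with $r$, and it must actually use $r$, because a ray of a fan lying in a cone of that fan is an extremal ray of that cone. Since $r$ lies in the relative interior of the edge $\tau$, the stellar subdivision is the only subdivision of $\sigma$ that uses $r$.
What remains is the paper's check that $\mathrm{cone}(r,\rho_1)$ and $\mathrm{cone}(r,\rho_2)$ are cones of $\Sigma_{\mathcal{G}}$. This is immediate, because $[i+1,j-1]$ is nested in both $[0,j-1]$ and $[i+1,n]$.
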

\begin{proof}
    By Lemma~\ref{lem: compatibilityFacets} and Proposition~\ref{prop:simple} is clear that $\Sigma_{\mathcal{G}}$ refines $\Sigma_{\mathcal{G}'}$. We also know by Proposition~\ref{prop: one extra ray} that we are adding one extra ray in the refinement. As we are working with simplicial fans it is enough to check that both $\Cone(r,\epsilon_{0}+\ldots+\epsilon_{j-1})$ and $\Cone(r,\epsilon_{i+1}+\ldots+\epsilon_{n})$ are two dimensional cones of $\Sigma_{\mathcal{G}}$. As the sets that label the rays are $\mathcal{G}$-compatible it follows. 
\end{proof}

\bibliographystyle{plainurl}
\bibliography{references}

\end{document}